\newcommand{\cache}[1]{}
\def\choixcompteur{subsection}
\newtheorem{theo}[\choixcompteur]{Theorem}
\newtheorem{prop}[\choixcompteur]{Proposition}
\newtheorem{lemm}[\choixcompteur]{Lemma}
\theoremstyle{definition}
\newtheorem{defi}[\choixcompteur]{Definition}
\newtheorem{rema}[\choixcompteur]{Remark}
\newtheorem{remas}[\choixcompteur]{Remarks}
\newtheorem*{exem*}{Example}
\newtheorem*{exems*}{Examples}
\newtheorem*{exam*}{Example}
\newtheorem*{exams*}{Examples}
\newtheorem*{rema*}{Remark}
\newtheorem*{remas*}{Remarks}
\newtheorem*{NB}{N.B}
\theoremstyle{definition}
\newtheorem*{defi*}{Definition}
\newtheorem*{defiprop*}{Definition-Proposition}
\theoremstyle{plain}
\newtheorem*{prop*}{Proposition}
\newtheorem*{lemm*}{Lemma}
\newtheorem*{coro*}{Corollary}
\newtheorem*{theo*}{Theorem}
 \def\cdr@enoncedef{%
 \newenvironment{enonce*}[2][plain]%
 {\let\cdrenonce\relax \theoremstyle{##1}%
 \newtheorem*{cdrenonce}{##2}%
 \begin{cdrenonce}}%
 {\end{cdrenonce}}   }%
\def\cf{{\it cf.\/}\ }
\def\ie{{\it i.e.\/}\ }
\def\eg{{\it e.g.\/}\ }
\def\lc{{\it l.c.\/}\ }
\def\resp{{\it resp.,\/}\ }
\def\truc{\unskip\kern 3pt\penalty 500
\hbox{\vrule\vbox to 5pt{\hrule width 4pt\vfill\hrule}\vrule}\kern
3pt}
\def\vect{\overrightarrow}
\def\parni{\par\noindent}
\def\ed{ editor}
\def\N{{\mathbb N}}    
\def\Z{{\mathbb Z}}
\def\R{{\mathbb R}}
\def\C{{\mathbb C}}
\def\A{{\mathbb A}}
\def\M{{\mathbb M}}
\def\G{{\mathbb G}}
\newcommand{\g}[1]{\mathfrak{#1}} 
\def\qa{\alpha}     
\def\qb{\beta}
\def\qd{\delta}
\def\qe{\varepsilon}
\def\qf{\varphi}
\def\qg{\gamma}
\def\qi{\iota}
 \def\qk{\kappa}
 \def\ql{\lambda}
\def\qm{\mu}
\def\qn{\nu}
\def\qp{\pi}
\def\qr{\rho}
\def\qs {\sigma}
 \def\qth{\theta}
\def\QD{\Delta}
\def\QF{\Phi}
\def\QG{\Gamma}
\def\QL{\Lambda}
\def\QO{\Omega}
\def\sha{{\mathcal A}}   
\def\shc{{\mathcal C}}
\def\shh{{\mathcal H}}
\def\shk{{\mathcal K}}
\def\shm{{\mathcal M}}
\def\sho{{\mathcal O}}
\def\shq{{\mathcal Q}}
\def\shs{{\mathcal S}}
\def\sht{{\mathcal T}}
\def\SHC{{\mathscr C}}
\def\SHE{{\mathscr E}}
\def\SHF{{\mathscr F}}
\def\SHI{{\mathscr I}}
\def\SHJ{{\mathscr J}}
\def\SHT{{\mathscr T}}
\def\HT{\text{ht\ }}
\begin{document}

\title{Macdonald's formula for Kac-Moody groups \goodbreak over local fields}
\author{Nicole Bardy-Panse, St\'ephane Gaussent and Guy Rousseau
 \footnote{keywords: Masure, building, Hecke algebras, Bernstein-Lusztig relations, Kac--Moody group, ultrametric  local field, Satake isomorphism, Macdonald's formula}
 \footnote{Classification: 20G44, 20E42, 20C08(primary), 17B67, 20G05, 20G25, 22E50, 20E42, 22E65, 51E24, 33D80}
}

\date{{September 12, 2017}}

%




\maketitle



\begin{abstract} {For an almost split Kac-Moody group $G$ over a local non-archimedean field, the last two authors constructed a spherical Hecke algebra $^s\shh$ (over $\C$, say) and its Satake isomorphism $\shs$ with the commutative algebra $\C[[Y]]^{W^v}$ of Weyl invariant elements in some formal series algebra  $\C[[Y]]$.
In this article we prove a Macdonald's formula, \ie an explicit formula for the image $\shs(c_\ql)$ of a basis element of $^s\shh$. 
The proof involves geometric arguments in the masure associated to $G$ and algebraic tools, including the Cherednik's representation of the Bernstein-Lusztig-Hecke algebra (introduced in a previous article) and the Cherednik's identity between some symmetrizers.
}
\end{abstract}


\setcounter{tocdepth}{1}    
\tableofcontents

\section*{Introduction}
\label{seIntro}

The Macdonald's formula, which is discussed here, is the one giving the image of the Satake isomorphism between the spherical Hecke algebra and the ring of invariant symmetric functions in the context of Kac-Moody groups. If the group is semisimple, the formula was proven by Macdonald \cite{Ma71}. In the case of a split {untwisted} affine Kac-Moody group, it is a result of Braverman, Kazhdan and Patnaik \cite{BrKP15}. 

Let $\mathcal K$ be a local field, denote its ring of integers by $\mathcal O$, fix a uniformizer {$\varpi$} and suppose that the residue field is of cardinality $q<+\infty$. For this introduction, let $G = G(\mathcal K)$ be a (split) minimal Kac-Moody group over $\mathcal K${, as defined by Tits \cite{T87}}.
Let $T\subset G$ be a maximal torus, denote the $\mathbb Z$-lattice of coweights $Hom (\mathcal K^*, T)$ by $Y$, {the (dual) $\mathbb Z$-lattice of weights $Hom (T,\mathcal K^*)$ by $X$,} the $\mathbb Z$-lattice of coroots by $Q^\vee$, and the real roots of $(G,T)$ by $\Phi$. Finally, let $W^v$ be the Weyl group of $(G,T)$.

To these data, the third author, in {\cite{R12}}, associates a masure $\SHI = \SHI(G,\mathcal K)$, which is a generalization of the Bruhat-Tits building. The masure $\SHI$ is covered by apartments, all isomorphic to the standard one $\mathbb A = Y\otimes_{\mathbb Z}\mathbb R$. The group $G$ acts on $\SHI$ such that $Stab_G(0) = G(\mathcal O)$. The preorder associated to the positive Tits cone in $\mathbb A$ extends to a $G$-invariant preorder $\geq$ on $\SHI$. Then $G^+ = \{g\in G\mid g\cdot 0 \geq 0\}$ is a subsemigroup of $G$ and the last two authors show in \cite{GR14} that 
$$
G^+ = \bigsqcup_{\lambda\in Y^{++}} G(\mathcal O) {\varpi^{-\lambda}} G(\mathcal O),
$$ 
where $Y^{++}$ is the set of dominant coweights (for a choice of a Borel subgroup $B$ in $G$ {containing $T$}) and {$\varpi^\lambda$ is the image of $\varpi$} by $\lambda$. 
Note that $G^+ = G$ if, and only if, $G$ is finite dimensional. Let $c_\lambda$ be the characteristic function of the double coset $G(\mathcal O)  {\varpi^{-\lambda}}  G(\mathcal O)$. 
The spherical Hecke algebra is the set
$$
^s\mathcal H = \Big\{\varphi = \sum_{\lambda\in Y^{++}} a_\lambda c_\lambda \mid supp(\varphi) \subset \cup_{i=1}^n \mu_i-Q_+^\vee, \ \mu_i \in Y^{++}{, a_\ql\in R}\Big\}
$$ 
where {$R$ is any ring containing $\Z[q^{\pm 1}]$ and }the algebra structure is given by the convolution product, see Section \ref{s2} for more details. 
Consider now the Looijenga's coweights algebra ${R}[[Y]]$, with the same kind of support condition as above. The second and third authors show in \cite{GR14}, using an extended tree inside the masure $\SHI$, the following theorem.

\medskip
\textbf{Theorem.}
The spherical Hecke algebra $^s\mathcal H$ is isomorphic, via the Satake isomorphism $\mathcal S$, to the commutative algebra ${R}[[Y]]^{W^v}$ of Weyl invariant elements in ${R}[[Y]]$.

\medskip
Now, to compute the image of $c_\lambda$ by the Satake isomorphism, we use essentially an idea of Casselmann \cite{Ca80} (see also \cite{HaKP10}): decompose  $\mathcal S(c_\lambda)$ as a sum (indexed by the Weyl group {$W^v$}) of more simple elements $J_w(\lambda)$ and then compute these  $J_w(\lambda)$. 
Originally this decomposition was obtained using intertwinning operators; this same idea was still in use in \cite{BrKP15}, for the affine case. 
Here we get this decomposition thanks our interpretation of $\mathcal S(c_\lambda)$ in terms of paths in the masure.
{ Namely $\mathcal S(c_\lambda)$ is a sum of some terms indexed by the ``Hecke paths $\qp$ of type $\ql$'' starting from $0$ in the standard apartment $\A$.
We define $J_w(\lambda)$ as the same sum indexed by the paths $\qp$ satisfying moreover $\qp'_+(0)=w.\ql$.
Then we are able to prove a recursive formula for the $J_w(\lambda)$, using the paths, retracting onto these Hecke paths, that are in some extended trees contained in the masure $\SHI$.
}

{

Independently, in a more algebraic way, we introduce the ring $\Z[\qs^{\pm 1}]$ where $\qs$ is an indeterminate and consider the Bernstein-Lusztig-Hecke algebra $^{BL}\mathcal H$,  which was defined in \cite{BPGR16} as an abstract (and bigger)  version of the affine Iwahori-Hecke algebra of the Kac-Moody group. 
More precisely, $^{BL}\mathcal H$ admits (as module) a $\Z[\qs^{\pm 1}]-$basis denoted $(Z^\ql T_w)_{\ql\in Y, w\in W^v}$, which satisfies some multiplication relations, including one of Bernstein-Lusztig type.
We define an algebra embedding of $^{BL}\mathcal H$ into $\Z[\qs^{\pm1}](Y)[W^v]$, where $\Z[\qs^{\pm1}](Y)$ is an algebra consisting of some infinite formal sums $\sum_{\ql\in Y}a_\ql e^\ql $ with $a_\ql\in \Z[\qs^{\pm 1}]$. 
So that we can see the elements of the Bernstein-Lusztig-Hecke algebra as written $ \sum_{w\in F} f_w[w]$ with $ F$ a finite subset of $ W^v$ and  $f_w$ in  $\Z[\qs^{\pm 1}] (Y)$. 

Further, the algebra $\Z[\qs^{\pm1}](Y)[W^v]$ acts on $\Z[\qs^{\pm1}](Y)$. In fact, with a specialization at $\qs^2=q^{-1}$, we have $J_w(\ql)=q^{\qr(\ql)}.(T_w(e^\ql))_{(\qs^2=q^{-1})} $, with $\rho\in X$  a weight taking value $1$ on each simple coroot and $T_w = Z^0T_w$. This equality is true because the right-hand side satisfies the same recursive formula as the $J_w(\lambda)$.

In order to consider the analogue of the sum $\mathcal S(c_\ql)$ in the ``same'' algebraic way.  We introduce some algebraic symmetrizers and the link between them (called Cherednik's identity)  will be a key to obtain the Macdonald's formula. In all this part of the article, we have to pay attention since we are dealing with infinite sums or products. So there are some completions to be considered, in order that these expressions make sense.

In some completion  of $\Z[\qs^{\pm 1}] (Y)$, set
 $$\Delta = \prod_{\alpha\in\Phi_+}\frac{1 - \qs^{2}e^{-\alpha^\vee}}{1- e^{-\alpha^\vee}}, \text{ and, for  }w\in W^v, \quad  ^w\Delta = \prod_{\alpha\in\Phi_+}\frac{1 - \qs^{2}e^{-w\alpha^\vee}}{1- e^{-w\alpha^\vee}}.
 $$
Then, the $T-$symmetrizer $P_\qs=\sum_{w\in W^v}T_w$ and the $\Delta-$symmetrizer $\SHJ_\qs = \sum_{w\in W^v}^w\Delta[w]$ can be seen as formal expressions $\sum_{w\in W^v} a_w [w]$  with $a_w$ in the above completion.
 In order to justify that the expression of $P_\qs$ makes sense, we use a geometrical argument on galleries, whereas the explanation given for $\SHJ_\qs$ is more classical.

The generalization of  theorem 2.18 in \cite{CheM13} is then achieved and we get the  Cherednik's identity:

 \textbf{Cherednik's identity.}
 $$ P_\qs=\sum_{w\in W^v} \qs_wH_w=\mathfrak{m}_\qs\!\SHJ_\qs$$ 
with  $\mathfrak{m}_\qs$ ``well-defined'',  invertible and $ W^v-$invariant in $\Z[\qs^{\pm 1}] [[Y]]$.

\par This was also proved by Braverman, Kazhdan and Patnaik \cite{BrKP15} (\resp Patnaik and Pusk\'as \cite {PaP17}) in the case of an untwisted, affine (\resp split, symmetrizable) Kac-Moody group.

Finally, we consider the Poincaré series $W'(\qs^2) = \sum_{w\in W'} \qs^{2\ell(w)}$  for $W'$ any subgroup of $W^v$.
  The element $P_\qs(e^\ql)$  is well defined in some greater completion and (up to the factor $W^v_\lambda(\qs^2)$ with $W^v_\lambda = Stab_{W^v}(\lambda)$) plays in this abstract context the role of $S(c_\ql))$.
   We can also consider $\SHJ_\qs (e^\ql)$ and we get $\mathfrak{m}_\qs=\frac{W^v(\qs^2)}{\sum_{w\in W^v}\,{^w\Delta}}\in \Z[\qs^{\pm 1}] [[Y]]$.
   We have also $\frac{\sum_{w\in W^v}\,{^w\Delta}.e^{w\lambda}}{W^v_\lambda(\qs^2)}\in  \Z[\qs^{\pm 1}] [[Y]]$, so we can specialize at $\qs^2=q^{-1}$ and we get the Macdonald's formula:
   
 \medskip
 \textbf{Macdonald's formula.}
Let $\lambda\in Y^{++}$. Then 
$$ 
S(c_\lambda)= q^{\rho(\lambda)}\bigg(\frac{W^v(\qs^2)}{\sum_{w\in W^v}\,{^w\Delta}}\bigg)_{(\qs^2=q^{-1})}  \bigg(\frac{\sum_{w\in W^v}\,{^w\Delta}.e^{w\lambda}}{W^v_\lambda(\qs^2)}\bigg)_{(\qs^2=q^{-1})}.
$$ 

}



\medskip
The right hand side of the equality is the Hall-Littlewood polynomial $P_\lambda(t)$ defined by Viswanath \cite{Vi08} in this context, where its $t$ corresponds to our $q^{-1}$. In particular, $P_\lambda(0)$ is the character of the irreducible representation $V(\lambda)$ of highest weight $\lambda$ of the Langlands dual Kac-Moody group.

Finally, the factor {$\mathfrak m = (\mathfrak m_\qs)_{\qs^2=q^{-1}}$} is equal to $1$ in the finite dimensional setting. In the affine setting, $\mathfrak m$ has an expression as an infinite product involving the minimal positive {imaginary} coroot and the exponents of the underlying semisimple group. In the general Kac-Moody case, no such formula is known.

Actually, our {Macdonald's formula} is still valid in the more general framework of an abstract masure as defined in \cite{R11}. In particular, we can deal with the case of an almost split Kac-Moody group over a local field and we may have unequal parameters in the definition of the Hecke algebras.

\medskip
\textbf{Content of the paper.}
In Section 1, we introduce the setting of the article, recalling several notation and definitions, including the one of the masure. Section 2 introduces the kind of algebras we are dealing with and the Satake isomorphism is recalled. The computation of the recurrence relation on the components $J_w(\lambda)$ is performed in Section 3. Then, we turn to the algebraic part of the paper: in Section 4, we introduce the Bernstein-Lusztig-Hecke algebra and study some of its representations. The symmetrizers are defined and studied in Sections 5 and 6. In particular, we explain how we cope with the questions of their well-definedness. Finally, Section 7 contains the main theorem of the paper: the Macdonald's formula for the Satake isomorphism.

\section{General framework}\label{s1}

\subsection{Vectorial data}\label{1.1}  We consider a quadruple $(V,W^v,(\qa_i)_{i\in I}, (\qa^\vee_i)_{i\in I})$ where $V$ is a finite dimensional real vector space, $W^v$ a subgroup of $GL(V)$ (the vectorial Weyl group), $I$ a finite set, $(\qa^\vee_i)_{i\in I}$ a family in $V$ and $(\qa_i)_{i\in I}$ a  family in the dual $V^*$.
{We suppose these families free, \ie the sets  $\{\qa_i\mid i\in I\}$ $\{\qa^\vee_i\mid i\in I\}$ linearly independent. We ask moreover} these data to satisfy the conditions of \cite[1.1]{R11}.
  In particular, the formula $r_i(v)=v-\qa_i(v)\qa_i^\vee$ defines a linear involution in $V$ which is an element in $W^v$ and $(W^v,\{r_i\mid i\in I\})$ is a Coxeter system. {We consider also the dual action of $W^v$ on $V^*$.}

  \par To be more concrete, we consider the Kac-Moody case of [\lc; 1.2]: the matrix $\M=(\qa_j(\qa_i^\vee))_{i,j\in I}$ is a generalized Cartan matrix.
  Then $W^v$ is the Weyl group of the corresponding Kac-Moody Lie algebra $\g g_\M$ and the associated real root system is
  
$$
\QF=\{w(\qa_i)\mid w\in W^v,i\in I\}\subset Q=\bigoplus_{i\in I}\,\Z.\qa_i.
$$ 
We set 
 $\QF^\pm{}=\QF\cap Q^\pm{}$ where $Q^\pm{}=\pm{}(\bigoplus_{i\in I}\,(\Z_{\geq 0}).\qa_i)$.
 Also $Q^\vee:=\bigoplus_{i\in I}\,\Z.\qa_i^\vee$
and $Q^\vee_\pm{}=\pm{}(\bigoplus_{i\in I}\,(\Z_{\geq 0}).\qa_i^\vee)$.
   We have  $\QF=\QF^+\cup\QF^-$
   and, for $\qa=w(\qa_i)\in\QF$, $r_\qa=w.r_i.w^{-1}$ and $r_\qa(v)=v-\qa(v)\qa^\vee$, where the coroot $\qa^\vee=w(\qa_i^\vee)$ depends only on $\qa$.

\par The set  $\QF_{}$ is an (abstract) reduced real root system in the sense of \cite{MP89}, \cite{MP95} or \cite{Ba96}.
We shall occasionally use {imaginary roots: $\QF_{im}=\QF^+_{im}\sqcup\QF^-_{im}$} with 
 $-\QF^-_{im}=\QF^+_{im}\subset Q^+$,  $W^v-$stable.
The set  $\QF_{all}=\QF_{}\sqcup\QF_{im}$ of all roots {has to be} an (abstract) root system in the sense of \cite{Ba96}.
{An example for $\QF_{all}$ is the full set of roots of $\g g_\M$.}

  \par The {\it fundamental positive chamber} is $C^v_f=\{v\in V\mid\qa_i(v)>0,\forall i\in I\}$.
   Its closure $\overline{C^v_f}$ is the disjoint union of the vectorial faces $F^v(J)=\{v\in V\mid\qa_i(v)=0,\forall i\in J,\qa_i(v)>0,\forall i\in I\setminus J\}$ for $J\subset I$. We set $V_0 = F^v(I){=V^{W^v}}$.
    The positive (resp. negative) vectorial faces are the sets $w.F^v(J)$ (resp. $-w.F^v(J)$) for $w\in W^v$ and $J\subset I$.
    The support of such a face is the vector space it generates.
    The set $J$ or the face $w.F^v(J)$ or an element of this face is called {\it spherical} if the group $W^v(J)$ generated by $\{r_i\mid i\in J\}$ is finite.
    An element of a vectorial chamber $\pm w.C^v_f$ is called {\it regular}.

    \par The {\it Tits cone}  $\sht$ (resp. its interior $\sht^\circ$) is the (disjoint) union of the positive (resp. and spherical) vectorial faces. It is a $W^v-$stable convex cone in $V$.
    Actually $W^v$ permutes the vectorial walls $M^v(\qa)=\ker(\qa)$ (for $\qa\in\QF$); it acts simply transitively on the positive (resp. negative) vectorial chambers.

  \par
  We say that $\A^v=(V,W^v)$ is a {\it vectorial apartment}.

\subsection{The model apartment}\label{1.2} 
As in \cite[1.4]{R11} the model apartment $\A$ is $V$ considered as an affine space and endowed with a family $\shm$ of walls. 
 These walls  are affine hyperplanes directed by $\ker(\qa)$ for $\qa\in\QF$.

 \par{\bf 1)} We ask this apartment to be {\bf semi-discrete} and the origin $0$ to be {\bf special}.
  This means that these walls are the hyperplanes defined as follows:
$$M(\qa,k)=\{v\in V\mid\qa(v)+k=0\}\qquad\text{for }\qa\in\QF_{}\text{ and } k\in\QL_\qa,$$ with $\QL_\qa=\QL_{-\qa}=d_\qa.\Z$ a non trivial discrete subgroup of $\R$.
\ Using Lemma 1.3 in \cite{GR14}, (\ie replacing $\QF$ by another system $\QF_1$), we may (and shall) assume that $\QL_\qa=\Z$ for all $\qa\in\QF$.

  \par For $\qa=w(\qa_i)\in\QF$, $k\in \Z$ and $M=M(\qa,k)$, the reflection $r_{\qa,k}=r_M$ with respect to $M$ is the affine involution of $\A$ with fixed points the wall $M$ and associated linear involution $r_\qa$.
   The affine Weyl group $W^a$ is the group generated by the reflections $r_M$ for $M\in \shm$; we assume that $W^a$ stabilizes $\shm$.
We know that $W^a=W^v\ltimes Q^\vee$ and we write $W^a_\R=W^v\ltimes V$; here $ Q^\vee$ and $V$ have to be understood as groups of translations.
One should notice that $Q^\vee$ (which is relative to the above new root system $\QF_1${, denoted $\QF$ below}) is actually completely determined by $\A$: it is the group $Y_{W^a}$ of all translations in the affine Weyl group $W^a$.
 
 \par To define enclosures, we shall also need some imaginary walls $M(\qa,k)$ for $\qa\in\QF_{im}$ and $k\in\QL_\qa$, where $\QL_\qa=-\QL_{-\qa}$ is a subset of $\R$.
 We ask $W^a$ to stabilize this set of imaginary walls.
 In many cases, \eg for split Kac-Moody groups over local fields with normalized valuation, we may suppose  $\QF$ (resp. $\QF_{im}$) is the set of real (resp. imaginary) roots of $\g g_\M$ and $\QL_\qa=\Z$, for all $\qa\in\QF_{all}$.

   \par{\bf 2)} An automorphism of $\A$ is an affine bijection $\qf:\A\to\A$ stabilizing the set of pairs $(M,\qa^\vee)$ of a wall $M$ and the coroot $\qa^\vee$ associated with $\qa\in\QF_{}$ such that $M=M(\qa,k)$, $k\in \Z$. 
   We write $\vect\qf:V\to V$ the linear application associated to $\qf$.
   The group $Aut(\A)$ of these automorphisms contains $W^a$ and normalizes it.
We consider also the group $Aut^W_\R(\A)=\{\qf\in Aut(\A)\mid\vect{\qf}\in W^v\}=Aut(\A)\cap W^a_\R$ { of vectorially Weyl automorphisms.
One has $Aut^W_\R(\A)=W^v\ltimes P^\vee$, where $ P^\vee=\{ v\in V\mid \qa(v)\in \Z, \forall\qa\in\QF \}$. }

   \par{\bf 3)} For $\qa\in{\QF_{all}}$ and $k\in\R$, $D(\qa,k)=\{v\in V\mid\qa(v)+k\geq 0\}$ is an half-space, it is called an {\it half-apartment} if {$k\in \Z$ and $\qa\in\QF$}. We write  $D(\alpha,\infty) = \mathbb A$.


The Tits cone $\mathcal T$ 
and its interior $\mathcal T^o$ are convex and $W^v-$stable cones, therefore, we can define two $W^v-$invariant preorder relations  on $\mathbb A$: 
$$
x\leq y\;\Leftrightarrow\; y-x\in\mathcal T
; \quad x\stackrel{o}{<} y\;\Leftrightarrow\; y-x\in\mathcal T^o.
$$
 If $W^v$ has no  fixed point in $V\setminus\{0\}$ {(\ie $V_0=\{0\}$)} and no finite factor, then they are orders; but, in general, they are not.


\subsection{Faces, sectors ...} 
\label{1.2b}

 The faces in $\mathbb A$ are associated to the above systems of walls
and half-apartments
. As in \cite{BrT72}, they
are no longer subsets of $\mathbb A$, but filters of subsets of $\mathbb A$. For the definition of that notion and its properties, we refer to \cite{BrT72} or \cite{GR08}.

If $F$ is a subset of $\mathbb A$ containing an element $x$ in its closure,
the germ of $F$ in $x$ is the filter $\mathrm{germ}_x(F)$ consisting of all subsets of $\mathbb A$ which contain intersections of $F$ and neighbourhoods of $x$. 
 {We say that $x$ is the {\it origin} of this germ.}
In particular, if $x\neq y\in \mathbb A$, we denote the germ in $x$ of the segment $[x,y]$ (resp. of the interval $]x,y]=[x,y]\setminus\{x\}$) by $[x,y)$ (resp. $]x,y)$).
{ \par For $y≠x$, the segment germ $[x,y)$ is called of sign $\pm$ if $y-x\in\pm\sht$.
The segment $[x,y]$ or the segment germ $[x,y)$} is called {\it preordered} if $x\leq y$ or $y\leq x$ and {\it generic} if $x\stackrel{o}{<} y$ or $y\stackrel{o}{<} x$.

Given $F$ a filter of subsets of $\mathbb A$, its {\it enclosure} $cl_{\mathbb A}(F)$ (resp. {\it closure} $\overline F$) is the filter made of  the subsets of $\mathbb A$ containing an element of $F$ of the shape $\cap_{\alpha\in\QF_{all}}D(\alpha,k_\alpha)$, where $k_\alpha\in {\QL_\qa}\cup\{\infty\}$ (resp. containing the closure $\overline S$ of some $S\in F$).

\medskip

A {\it local face} $F$ in the apartment $\mathbb A$ is associated
 to a point $x\in \mathbb A$, its vertex {or origin}, and a  vectorial face $F^v{=:\vect F}$ in $V$, its direction. It is defined as $F=germ_x(x+F^v)$ and we denote it by $F=F^\ell(x,F^v)$.
 Its closure is $\overline{F^\ell}(x,F^v)=germ_x(x+\overline{F^v})$.
 {Its sign is the sign of $F^v$.}

There is an order on the local faces: the assertions ``$F$ is a face of $F'$ '',
``$F'$ covers $F$ '' and ``$F\leq F'$ '' are by definition equivalent to
$F\subset\overline{F'}$.
 The dimension of a local face $F$ is the smallest dimension of an affine space generated by some $S\in F$.
  The (unique) such affine space $E$ of minimal dimension is the support of $F$; if $F=F^\ell(x,F^v)$, $supp(F)=x+supp(F^v)$.
 A local face $F=F^\ell(x,F^v)$ is spherical if the direction of its support meets the open Tits cone (\ie if $F^v$ is spherical), then its  pointwise stabilizer $W_F$ in $W^a$ is finite.

\medskip
  For any local face $F^\ell=F^\ell(x,F^v)$, there is a unique face $F$ (as defined in \cite{R11}) containing $F^\ell$.
 Then $\overline{F^\ell}\subset\overline F=cl_\A(F^\ell)=cl_\A(F)$ is also the enclosure of any interval-germ $]x,y)=germ_x(]x,y])$ included in $F^\ell$.
 
 \par We shall actually here speak only of local faces, and sometimes forget the word local.

\medskip
 A {\it local chamber}  is a maximal local face, \ie a local face $F^\ell(x,\pm w.C^v_f)$ for $x\in\A$ and $w\in W^v$.
 The {\it fundamental local chamber} {of sign $\pm$} is $C_0^\pm=germ_0(\pm C^v_f)$.

A {\it (local) panel} is a spherical local face maximal among local faces which are not chambers, or, equivalently, a spherical face of dimension $n-1$. Its support is a wall.

\medskip
 A {\it sector} in $\mathbb A$ is a $V-$translate $\mathfrak s=x+C^v$ of a vectorial chamber
$C^v=\pm w.C^v_f$, $w \in W^v$. The point $x$ is its {\it base point} and $C^v{=:\vect{\g s}}$ its  {\it direction}.  Two sectors have the same direction if, and only if, they are conjugate
by $V-$translation,
 and if, and only if, their intersection contains another sector.

 The {\it sector-germ} of a sector $\mathfrak s=x+C^v$ in $\mathbb A$ is the filter $\mathfrak S$ of
subsets of~$\mathbb A$ consisting of the sets containing a $V-$translate of $\mathfrak s$, it is well
determined by the direction $C^v{=\vect{\g s}=:\vect{\g S}}$. So, the set of
translation classes of sectors in $\mathbb A$, the set of vectorial chambers in $V$ and
 the set of sector-germs in $\mathbb A$ are in canonical bijection.
  We denote the sector-germ associated to the  fundamental vectorial chamber $\pm C^v_f$ by $\g S_{\pm\infty}$.

 A {\it sector-face} in $\mathbb A$ is a $V-$translate $\mathfrak f=x+F^v$ of a vectorial face
$F^v=\pm w.F^v(J)$. The sector-face-germ of $\mathfrak f$ is the filter $\mathfrak F$ of
subsets containing a translate $\mathfrak f'$ of $\mathfrak f$ by an element of $F^v$ ({\it i.e.} $\mathfrak
f'\subset \mathfrak f$). If $F^v$ is spherical, then $\mathfrak f$ and $\mathfrak F$ are also called
spherical. The sign of $\mathfrak f$ and $\mathfrak F$ is the sign of $F^v$.

\cache{ 
\medskip
A {\it chimney} in $\mathbb A$ is associated to a face $F=F(x, F_0^v)$, called its basis, and to a vectorial face $F^v$, its direction, it is the filter
$$
\mathfrak r(F,F^v) = cl_{\mathbb A}(F+F^v).
$$ A chimney $\mathfrak r = \mathfrak r(F,F^v)$ is {\it splayed} if $F^v$ is spherical, it is {\it solid} if its support (as a filter, i.e. the smallest affine subspace containing $\mathfrak r$) has a finite  pointwise stabilizer in $W^v$. A splayed chimney is therefore solid. The enclosure of a sector-face $\mathfrak f=x+F^v$ is a chimney.
} 


 \subsection{The masure}\label{1.3}

 In this section, we recall some properties of a masure {as defined} in \cite{R11}.

\medskip
\parni{\bf 1)} An apartment of type $\mathbb A$ is a set $A$ endowed with a set $Isom^W\!(\mathbb A,A)$ of bijections (called Weyl-isomorphisms) such that, if $f_0\in Isom^W\!(\mathbb A,A)$, then $f\in Isom^W\!(\mathbb A,A)$ if, and only if, there exists $w\in W^a$ satisfying $f = f_0\circ w$.
An isomorphism (resp. a Weyl-isomorphism, a vectorially-Weyl isomorphism) between two apartments $\varphi :A\to A'$ is a bijection such that, for any $f\in Isom^W\!(\mathbb A,A)$, $f'\in Isom^W\!(\mathbb A,A')$, {we have} $f'^{-1}\circ\qf\circ f\in Aut(\A)$ (resp. $\in W^a$, $\in Aut^W_\R(\A)$); the {set} of these isomorphisms is written $Isom(A,A')$ (resp. $Isom^W(A,A')$, $Isom^W_\R(A,A')$).
 As the filters in $\A$ defined in \ref{1.2b} above (\eg local faces, sectors, walls,..) are permuted by $Aut(\A)$, they are well defined in any apartment of type $\A$ and exchanged by any isomorphism.

\medskip
\par An {\it ordered affine hovel} (or {\it masure} for short) of type $\mathbb A$ is a set $\SHI$ endowed with a covering $\mathcal A$ of subsets  called apartments, each endowed with some structure of an apartment of type $\A$.
 We do not recall here the precise definition, but indicate some of the main properties:


\par {\bf a)} If $F$ is a point, a preordered segment,  a local face or a spherical sector face in an apartment $A$ and if $A'$ is another apartment containing $F$, then $A\cap A'$ contains the
enclosure $cl_A(F)$ of $F$ and there exists a Weyl-isomorphism from $A$ onto $A'$ fixing $cl_A(F)$.
\par A filter or subset in $\SHI$ is called a preordered segment, a preordered segment germ, a local face, a spherical sector face or a spherical sector face germ if it is included in some apartment $A$ and is called like that in $A$.

\par {\bf b)} If $\mathfrak F$ is  the germ of a spherical sector face and if $F$ is a face or a germ of a spherical sector face, then there exists an apartment that contains $\mathfrak F$ and $F$.

\par {\bf c)}  If two apartments $A,A'$ contain $\mathfrak F$ and $F$ as in {\bf b)}, then their intersection contains $cl_A(\mathfrak F\cup F)$ and there exists a Weyl-isomorphism from $A$ onto $A'$ fixing $cl_A(\mathfrak F\cup F)$;

\par {\bf d)} We consider the relations $≤$ and $\stackrel{o}{<}$ on $\SHI$ defined as follows:
$$x≤y \text{ (resp. } x\stackrel{o}{<}y\ ) \iff \exists A\in\sha \text{ such that }x,y\in A\text{ and } x≤_A y  \text{ (resp. } x\stackrel{o}{<}_Ay\ ) $$
then $≤$ and $\stackrel{o}{<}$ are well defined preorder relations, in particular transitive.

\par {\bf e)}  We ask here $\SHI$ to be thick of {\bf finite thickness}: the number of local chambers  containing a given (local) panel has to be finite $\geq 3$.
     This number is the same for any panel in a given wall $M$ \cite[2.9]{R11}; we denote it by $1+q_M$.

\par {\bf f)}  An automorphism (resp. a Weyl-automorphism, a vectorially-Weyl automorphism) of $\SHI$ is a bijection $\qf:\SHI\to\SHI$ such that $A\in\sha\iff \qf(A)\in\sha$ and then $\qf\vert_A:A\to\qf(A)$ is an isomorphism (resp. a Weyl-isomorphism, a vectorially-Weyl isomorphism).
{We write $Aut(\SHI)$ (resp. $Aut^W(\SHI)$, $Aut^W_\R(\SHI)$) the group of these automorphisms.}
\medskip
\parni{\bf 2)} For $x\in\SHI$, the set $\sht^+_x\SHI$ (resp. $\sht^-_x\SHI$) of {positive (resp. negative)} segment germs $[x,y)$ may be considered as a building, the positive (resp. negative) tangent building. The corresponding faces are the {positive (resp. negative)}  local faces {of} vertex $x$. The associated Weyl group is $W^v$.
 If the $W-$distance (calculated in $\sht^\pm_x\SHI$) of two local chambers is $d^W(C_x,C'_x)=w\in W^v$, to any reduced decomposition $w=r_{i_1}\cdots r_{i_n}$ corresponds a unique minimal gallery from $C_x$ to $C'_x$ of type $(i_1,\cdots,i_n)$. We shall say, {by abuse of notation}, that this gallery is of type $w$.

 \par The buildings $\sht^+_x\SHI$ and $\sht^-_x\SHI$ are actually twinned. The codistance $d^{*W}(C_x,D_x)$ of two opposite sign chambers $C_x$ and $D_x$ is the $W-$distance $d^W(C_x, op D_x)$, where $op D_x$ denotes the opposite chamber to $D_x$ in an apartment containing $C_x$ and $D_x$.

\begin{enonce*}[plain]{3) Lemma}
  \cite[2.9]{R11} Let $D$ be an half-apartment in $\SHI$ and $M=\partial D$ its wall (\ie its boundary).
 One considers a panel $F$ in $M$ and a local chamber $C$ in $\SHI$ covering $F$.
 Then there is an apartment containing $D$ and $C$.
 \end{enonce*}

\parni{\bf 4)}  We assume that {there is a group $G$ acting strongly transitively on $\SHI$ (by automorphisms)},
\ie all isomorphisms involved in {1.a or 1.c above} are induced by elements of $G$, \cf \cite[4.10]{R13} and \cite{CiMR17}.
  We choose in $\SHI$ a fundamental apartment which we identify with $\A$.
   As $G$ is strongly transitive, the apartments of $\SHI$ are the sets $g.\A$ for $g\in G$. 
  { If $N$ is the stabilizer of $\A$ in $G$, there is an homomorphism $\qn:N\to Aut(\A)$ and the  group $W=\qn(N)$ of affine automorphisms of $\A$}
   permutes the walls, local faces, sectors, sector-faces... and contains the affine Weyl group $W^a=W^v\ltimes Q^\vee$ \cite[4.13.1]{R13}.

\par   We denote the  stabilizer of $0\in\A$ in $G$ by $K$ and the pointwise stabilizer (or fixer) of {$C_0^+$ (resp. $C_0^-$) by $K_I=K^+_I$ (resp.$K_I^-$); this group $K_I^\pm$ is called the {\it fundamental Iwahori subgroup} of sign $\pm$.
More generally we write $G_\QO$ the (pointwise) fixer in $G$ of a subset or filter $\QO$ in $\SHI$.}

\medskip
\parni{\bf 5)}  We ask  $W=\qn(N)$ to be {\bf vectorially-Weyl} for its action on the vectorial faces. {With this hypothesis we know that the preorders on $\SHI$ are $G-$invariant and the elements of $G$ are vectorially Weyl.}
  
  \par    As $W$ contains $W^a$ and stabilizes $\shm$, we have $W=W^v\ltimes Y$, where $W^v$ fixes the origin $0$ of $\A$ and $Y$ is a group of translations such that:
  \quad    {$ Q^\vee\subset Y\subset P^\vee$}.
 An element $\mathbf{w}\in W$ will often be written $\mathbf{w}=\ql.w$ {or $\mathbf{w}=t_\ql.w$}, with $\ql\in Y$ and $w\in W^v$.


  \par We ask $Y$ to be {\bf discrete} in $V$. This is clearly satisfied if $\QF$ generates $V^*$ \ie $(\qa_i)_{i\in I}$ is a basis of $V^*$.
  {We write $Z=\qn^{-1}(Y)$ and $Z_0=\ker\qn$.}

\medskip
\parni{\bf 6)} Note that there is only a finite number of constants $q_M$ as in the definition of thickness. 
 First for $\qa\in\QF$, one has $\qa(\qa^\vee)=2$, hence the translation by $\qa^\vee\in  Q^\vee \subset Y$ permutes the walls $M(\qa,k)$ (for $k\in \Z$) with two orbits. 
 So, $ Q^\vee\subset W^a$ has at most two orbits in the set of the constants $q_{M(\qa,k)}$: one containing $q_\qa=q_{M(\qa,0)}$ and the other containing $q_\qa'=q_{M(\qa,\pm1)}$.
 Moreover we have $w.M(\qa,k)=M(w(\qa),k),\forall w\in W^v$.
 So the only possible parameters are the $q_i:=q_{\qa_i}$ and $q'_i:=q'_{\qa_i}$.
 We denote this set of parameters by $\shq=\{q_i,q'_i\mid i\in I\}$.

\par If $\qa_i(\qa_j^\vee)$ is odd for some $i,j\in I$, the translation by $\qa_j^\vee$ exchanges the two walls $M(\qa_i,0)$ and $M(\qa_i,\qa_i(\qa_j^\vee))$; so $q_i=q'_i$.
More generally, we see that $q_i=q'_i$ when {$\qa_i(Y)=\Z$, \ie $\qa_i(Y)$} contains an odd integer.
If $\qa_i(\qa_j^\vee)=\qa_j(\qa_i^\vee)=-1$, one knows that the element $r_ir_jr_i$ of $W^v(\{i,j\})$ exchanges $\qa_i$ and $-\qa_j$, so  $q_i=q'_i=q_j=q'_j$.

\par As in \cite[4.8]{BPGR16}, for $\qa\in\QF$ and $n\in\N=\Z_{≥0}$, we write $q_\qa^{*n}=q_\qa.q'_\qa.q_\qa.q'_\qa.\cdots$ and $q_\qa'^{*n}=q_\qa'.q_\qa.q'_\qa.q_\qa.\cdots$ with $n$ terms in each product.

\medskip
  \par\noindent{\bf 7) Remark.}
  All isomorphisms in \cite{R11} are Weyl-isomorphisms, and, when $G$ is strongly transitive, all isomorphisms constructed in \lc are induced by an element of $G$.


  \subsection{Type $0$ vertices}\label{1.4}

  The elements of $Y$, through the identification $Y=N.0$, are called {\it vertices of type $0$} in $\A$; they are special vertices. We note $Y^+=Y\cap\sht$ and $Y^{++}=Y\cap \overline{C^v_f}$.
   The type $0$ vertices in $\SHI$ are the points on the orbit $\SHI_0$ of $0$ by $G$. This set $\SHI_0$ is often called the affine Grassmannian as it is equal to $G/K$, where $K =$ Stab$_G(\{0\})$. But in general, $G$ is not equal to $KYK=KNK$ \cite[6.10]{GR08} \ie $\SHI_0\not=K.Y$.

   \par {Using the  preorder $≤$ on $\SHI$}, we set $\SHI^+=\{x\in\SHI\mid0\leq x\}$, $\SHI^+_0=\SHI_0\cap\SHI^+$ and $G^+=\{g\in G\mid0\leq g.0\}$; so $\SHI^+_0=G^+.0=G^+/K$.
   As $\leq $ is a $G-$invariant preorder, $G^+$ is a semigroup.

 \par  If $x\in\SHI^+_0$ there is an apartment $A$ containing $0$ and $x$ (by definition of $\leq$) and all apartments containing $0$ are conjugated to $\A$ by $K$ (\cf \ref{1.3}{.4}); so $x\in K.Y^+$ as $\SHI^+_0\cap\A=Y^+$.
    But $\qn(N\cap K)=W^v$ and $Y^+=W^v.Y^{++}$, with uniqueness of the element in $Y^{++}$. 
     So $\SHI^+_0=K.Y^{++}$, more precisely $\SHI^+_0=G^+/K$ is the union of the $KyK/K$ for $y\in Y^{++}$.
 This union is disjoint, for the above construction does not depend on the choice of $A$ (\cf \ref{1.11}.a).

    \par Hence, we have proved that the map $Y^{++}\to K\backslash G^+/K$ is one-to-one and onto.

\subsection{Vectorial distance and {$ Q^\vee-$}order}\label{1.5} 

    For $x$ in the Tits cone $\sht$, we denote by $x^{++}$ the unique element in $\overline{C^v_f}$ conjugated by $W^v$ to $x$.

    \par  Let $\SHI\times_\leq \SHI=\{(x,y)\in\SHI\times\SHI\mid x\leq y\}$ be the set of increasing pairs in $\SHI$.
    Such a pair $(x,y)$ is always in a same apartment $g.\A$; so $(g^{-1}).y-(g^{-1}).x\in\sht$ and we define the {\it vectorial distance} $d^v(x,y)\in  \overline{C^v_f}$ by $d^v(x,y)=((g^{-1}).y-(g^{-1}).x)^{++}$.
    It does not depend on the choices we made (by \ref{1.11}.a below).
    

    \par For $(x,y)\in \SHI_0\times_\leq \SHI_0=\{(x,y)\in\SHI_0\times\SHI_0\mid x\leq y\}$, the vectorial distance $d^v(x,y)$ takes values in $Y^{++}$.
     Actually, as $\SHI_0=G.0$, $K$ is the  stabilizer of $0$ and $\SHI^+_0=K.Y^{++}$ (with uniqueness of the element in $Y^{++}$), the map $d^v$ induces a bijection between the set $(\SHI_0\times_\leq \SHI_0)/G$ of $G-$orbits in $\SHI_0\times_\leq \SHI_0$ and $Y^{++}$.

     \par Further, $d^v$ gives the inverse of the  map $Y^{++}\to K\backslash G^+/K$, as any $g\in G^+$ is in $K.d^v(0,g.0).K$.

     \par For $x,y\in\A$, we say that $x\leq _{Q^\vee}y$ (resp. $x\leq _{Q^\vee_\R}\,y$) when $y-x\in { Q^\vee_+}$ (resp. $y-x\in Q^\vee_{\R+}=\sum_{i\in I}\,\R_{\geq 0}.\qa_i^\vee$).
     We get thus {an order on $\A$}.

\subsection{Paths}
\label{1.9}

We consider piecewise linear continuous paths
$\pi:[0,1]\rightarrow \mathbb A$ such that each (existing) tangent vector $\pi'(t)$
belongs to an orbit $W^v.\lambda$ for some $\lambda\in {\overline{C^v_f}}$. Such a path is called a {\it $\lambda-$path}; it is
increasing with respect to the preorder relation $\leq$ on $\mathbb A$.
 For any $t\neq 0$ (resp. $t \neq1$), we let
$\pi'_-(t)$ (resp. $\pi'_+(t)$) denote the derivative of $\pi$ at $t$ from the left
(resp. from the right). 

\par Hecke paths of shape $\ql$ (with respect to the sector germ $\g S_{-\infty}=germ_\infty(-C^v_f)$) are  $\ql-$paths satisfying some further precise conditions, see \cite[3.27]{KM08} or \cite[1.8]{GR14}. For us their interest will appear just below in \ref{1.10}.







  \subsection{Retractions onto $Y^+$}\label{1.10} 

For all $x\in \SHI^+$ there is an apartment containing $x$ and {$C_0^\pm=germ_0(\pm C^v_f)$} \cite[5.1]{R11} and this apartment is conjugated to $\A$ by an element of $K$ fixing {$C_0^\pm$ (\cf \ref{1.3}.1.a )}.
    So, by the usual arguments and [\lc, 5.5], see below \ref{1.12}.a), we can define the retraction $\qr_{{C_0^\pm,\A}}$ of $\SHI^+$ into $\A$ with center {$C_0^\pm$}; its image is $\qr_{{C_0^\pm,\A}}(\SHI^+)=\sht=\SHI^+\cap\A$ and $\qr_{{C_0^\pm,\A}}(\SHI^+_0)=Y^+$.

    \par Using {\ref{1.3}.1 b) and c)}, \cf  \cite[4.4]{GR08}, we may also define the retraction $\qr_{-\infty}$ of $\SHI$ onto $\A$ with center the sector-germ $\g S_{-\infty}$.

    \par More generally {by \cite[1.8]{BPGR16}}, we may define the retraction $\qr$ of $\SHI$ (resp. of the subset $\SHI_{\geq z}=\{y\in\SHI\mid y\geq z\}$, for a fixed $z$) onto an apartment $A$ with center any sector germ (resp. any local chamber with vertex $z$) {contained in $A$}.
 For any such retraction $\qr$, the image of any segment $[x,y]$ with $(x,y)\in\SHI\times_\leq \SHI$ and $d^v(x,y)=\ql\in\overline{C^v_f}$ (resp. and moreover $x,y\in\SHI_{\geq  z}$) is a $\ql-$path. 
  In particular, $\qr(x)\leq \qr(y)$.

     \par Actually, the image by $\qr_{-\infty}$ of any segment $[x,y]$ with $(x,y)\in\SHI\times_\leq \SHI$ and $d^v(x,y)=\ql\in Y^{++}$ is a Hecke path of shape $\ql$ with respect to $\g S_{-\infty}$.  We have the following:



 \begin{lemm*}  a) For $\ql\in Y^{++}$ and $w\in W^v$, $w.\ql\in\ql- Q^\vee_+$, \ie $w.\ql\leq _{{ Q}^\vee}\,\ql$.

 \par b) Let $\qp$ be a Hecke path of shape $\ql\in Y^{++}$ with respect to $\g S_{-\infty}$, from $y_0\in Y$ to $y_1\in Y$.
  Then, for $0\leq t<t'<1$,
$$
\begin{array}{l}\ql=\qp'_+(t)^{++}=\qp'_-(t')^{++};\\
\qp'_+(t)\leq _{{ Q}^\vee}\qp'_-(t')\leq _{{ Q}^\vee}\qp'_+(t')\leq _{{ Q}^\vee}\qp'_-(1);\\
\qp'_+(0)\leq _{{ Q}^\vee}\,\ql;\\ 
\qp'_+(0)\leq _{{ Q}^\vee}\,(y_1-y_0)\leq _{{ Q}^\vee}\,\qp'_-(1)\leq _{Q^\vee}\,\ql;\\
y_1-y_0 \leq _{{ Q}^\vee}\,\ql.
\end{array}
$$

  \par{ Moreover $y_1-y_0$ is in the convex hull $conv(W^v.\ql)$ of all $w.\ql$ for $w\in W^v$, more precisely in the convex hull 
   $conv(W^v.\ql,\geq\qp'_+(0))$ of all $w'.\ql$ for $w'\in W^v$, $w'\leq w$, where $w$ is the element with minimal length such that $\qp'_+(0)=w.\ql$.}



\par c) If  $x\leq z\leq y$ in $\SHI_0$, then $d^v(x,y)\leq _{{ Q}^\vee}d^v(x,z)+d^v(z,y)$.

 \end{lemm*}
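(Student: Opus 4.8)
The plan is to establish the three parts of the lemma in the order (a), (b), (c), since (b) is clearly the technical heart and (a) is a warm-up special case of the inequalities in (b), while (c) is a geometric consequence obtained by retraction. For part (a): given $\ql\in Y^{++}$ and $w\in W^v$, I would write $w$ as a reduced word $r_{i_1}\cdots r_{i_k}$ and induct on $\ell(w)=k$. The base case is trivial. For the inductive step, set $w'=r_{i_1}w$ with $\ell(w')=k-1$; one has $w.\ql = r_{i_1}(w'.\ql) = w'.\ql - \qa_{i_1}(w'.\ql)\,\qa_{i_1}^\vee$, and the exchange condition for reduced words gives $\qa_{i_1}(w'.\ql)\geq 0$ (this is the standard fact that $w'^{-1}\qa_{i_1}>0$ when $\ell(r_{i_1}w')>\ell(w')$, paired with $\ql$ dominant). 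Hence $w.\ql \leq_{Q^\vee} w'.\ql$, and by induction $w'.\ql\leq_{Q^\vee}\ql$; transitivity of $\leq_{Q^\vee}$ (it is an order on $\A$ as noted in~\ref{1.5}) finishes it.

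For part (b): the first line $\ql=\qp'_+(t)^{++}=\qp'_-(t')^{++}$ is just the definition of a $\ql$-path together with the fact that a Hecke path of shape $\ql$ has all its (one-sided) tangent vectors in $W^v.\ql$, so their dominant representative is $\ql$. The central chain of $\leq_{Q^\vee}$-inequalities is the substance. I would argue that at each ``break point'' $t$ of the piecewise-linear path, the Hecke condition (as recalled from \cite[3.27]{KM08} or \cite[1.8]{GR14}) says precisely that $\qp'_+(t)$ is obtained from $\qp'_-(t)$ by a sequence of steps, each either fixing the vector or replacing $\eta$ by $r_{\qa}\eta = \eta-\qa(\eta)\qa^\vee$ with $\qa$ a positive real root and $\qa(\eta)>0$ at that step — exactly the ``folding'' that moves a vector toward the $\g S_{-\infty}$ side; each such step subtracts a nonnegative multiple of a positive coroot, so $\qp'_+(t)\leq_{Q^\vee}\qp'_-(t)$. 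Between two consecutive break points the path is linear with constant derivative, so $\qp'_-(t')=\qp'_+(t)$ there; combining over all break points in $(t,t')$ and using transitivity yields $\qp'_+(t)\leq_{Q^\vee}\qp'_-(t')\leq_{Q^\vee}\qp'_+(t')\leq_{Q^\vee}\qp'_-(1)$. The line $\qp'_+(0)\leq_{Q^\vee}\ql$ then follows by applying (a) to $\qp'_+(0)=w_0.\ql$ for the appropriate $w_0\in W^v$ (it is $\ql$-valued). For $\qp'_+(0)\leq_{Q^\vee}(y_1-y_0)\leq_{Q^\vee}\qp'_-(1)$, I would integrate: $y_1-y_0=\int_0^1\qp'(s)\,ds$ is a convex-average of the tangent vectors $\qp'_+(s)$, and since the chain above shows $\qp'_+(0)\leq_{Q^\vee}\qp'_+(s)\leq_{Q^\vee}\qp'_-(1)$ for all $s$ (in the $\R$-order $\leq_{Q^\vee_\R}$, which suffices here since $y_1-y_0\in Y\subset\A$ and the integral of $Q^\vee_+$-valued increments lands in $Q^\vee_+$ once we know the endpoints are in $Y$ — more carefully, $y_1-y_0$ minus $\qp'_+(0)$ is a sum over subintervals of nonnegative-$\R$ combinations of positive coroots, hence in $Q^\vee_{\R+}$, and being in $Q^\vee$ it lies in $Q^\vee_+$). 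Chaining with the previous line gives $y_1-y_0\leq_{Q^\vee}\ql$. For the final ``moreover'': $y_1-y_0 = \frac{1}{\#\text{partition}}$-type convex combination of the $\qp'_+(s)$, each of which is some $w'.\ql$; the first inequalities force $\qp'_+(s)\geq\qp'_+(0)$ in the $Q^\vee$-order, and a standard fact about Hecke paths (Bruhat order / the "$w'\leq w$" statement in \cite[1.8]{GR14} or \cite[3.27]{KM08}) gives that each such $w'$ satisfies $w'\leq w$, where $w$ is the minimal-length element with $\qp'_+(0)=w.\ql$; hence $y_1-y_0\in conv(\{w'.\ql : w'\leq w\}) = conv(W^v.\ql,\geq\qp'_+(0))$.

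For part (c): suppose $x\leq z\leq y$ in $\SHI_0$. Choose an apartment $A$ containing $x$ and $z$ (exists by definition of $\leq$), and inside it a local chamber $C$ with vertex $x$ pointing "away" so that the retraction $\qr=\qr_{C,\A}$-type construction of \ref{1.10} applies to $\SHI_{\geq x}$; more simply, translate so $x=0$ and use $\qr_{-\infty}$ after conjugating $x$ to the origin by an element of $G$ (legitimate since $\SHI_0=G.0$). Under such a retraction $\qr$ onto $\A$ fixing a chamber at $x$, the image of the segment $[x,y]$ is a Hecke path of shape $d^v(x,y)=:\ql$ from $\qr(x)$ to $\qr(y)$, and $z$ lies on a subsegment, with $\qr(x)\leq\qr(z)\leq\qr(y)$ and moreover $d^v(x,z) = $ the "$++$" of the relevant initial increment direction while the local behaviour at $\qr(z)$ realizes $d^v(z,y)$ as governed by $\qp'_+$ at that point. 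Concretely, $d^v(x,y)=\ql$, whereas by the retraction-preserves-$\ql$-paths property applied to $[x,z]$ and $[z,y]$ separately and the fact that $\qr$ restricted near $x$ is an isomorphism on an apartment containing $[x,z]$, one gets $d^v(x,z)+d^v(z,y) \geq_{Q^\vee}$ the dominant representative of the total displacement $\qr(y)-\qr(x)$, which by part (b) (applied to the Hecke path $\qr([x,y])$, the line $y_1-y_0\leq_{Q^\vee}\ql$ read in reverse for the displacement versus $\ql=d^v(x,y)$) equals... — here I would instead argue directly: $d^v(x,z)$ is the dominant form of $z-x$ read in an apartment containing both, $d^v(z,y)$ likewise, and their sum dominates (in $\leq_{Q^\vee}$) the dominant form of $(z-x)+(y-z)=y-x$, i.e. $d^v(x,y)$, by the standard fact that $\mu^{++}+\nu^{++}\geq_{Q^\vee}(\mu+\nu)^{++}$ for $\mu,\nu\in\sht$ — and this last fact is itself a consequence of part (a) plus convexity. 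The cleanest route is: reduce (c) to the statement "for $\mu,\nu\in Y^+$, $(\mu+\nu)^{++}\leq_{Q^\vee}\mu^{++}+\nu^{++}$" by choosing apartments realizing the three vectorial distances, then prove that statement from (a).

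\medskip
\textbf{Main obstacle.} The genuinely delicate point is part (b), specifically extracting the $\leq_{Q^\vee}$ (as opposed to merely $\leq_{Q^\vee_\R}$) conclusions and the Bruhat-order refinement in the "moreover" clause: one must use that $y_0,y_1\in Y$ (so differences lie in the coroot lattice $Q^\vee\subset Y$) to upgrade real-coefficient coroot inequalities to integral ones, and one must correctly cite/unpack the precise definition of a Hecke path from \cite{KM08}/\cite{GR14} to see that each fold step subtracts a \emph{nonnegative integer} multiple of a positive coroot and that the elements $w'$ arising along the path lie below $w$ in Bruhat order. Part (c) is then essentially formal once (a) and (b) are in hand, modulo setting up the retraction carefully so that the two sub-segments $[x,z]$ and $[z,y]$ are each handled in an apartment where the retraction is an isometry near the relevant endpoint.
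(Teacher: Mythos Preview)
The paper does not actually prove this lemma; it is stated as a recall of facts from \cite{KM08} and \cite{GR14} and left without a \texttt{proof} environment. So there is no ``paper's proof'' to compare against, and your outline is what such a proof would have to look like. Your plan for (a) is standard and correct. Two points, however, need repair.

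\medskip
\textbf{Part (b): the sign of the fold is reversed.} You write that at a break point the Hecke condition replaces $\eta$ by $r_\qa\eta=\eta-\qa(\eta)\qa^\vee$ with $\qa>0$ and $\qa(\eta)>0$, hence ``subtracts a nonnegative multiple of a positive coroot, so $\qp'_+(t)\leq_{Q^\vee}\qp'_-(t)$''. But the statement you are trying to prove says $\qp'_-(t')\leq_{Q^\vee}\qp'_+(t')$, i.e.\ the derivative \emph{increases} across a break. Your intermediate inequality is in the wrong direction and, combined with constancy between breaks, would give $\qp'_-(1)\leq_{Q^\vee}\qp'_+(0)$, the opposite of what you then assert. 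The correct Hecke condition with respect to $\g S_{-\infty}$ (see \cite[1.8]{GR14}) is $\qa>0$ and $\qa(\eta)<0$ at each step, so $r_\qa\eta=\eta-\qa(\eta)\qa^\vee\in\eta+Q^\vee_+$, giving $\qp'_-(t)\leq_{Q^\vee}\qp'_+(t)$. With this corrected, your chain argument and integration go through as written.

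\medskip
\textbf{Part (c): the reduction you call ``cleanest'' does not work.} You propose to reduce to the purely combinatorial fact $(\qm+\qn)^{++}\leq_{Q^\vee}\qm^{++}+\qn^{++}$ ``by choosing apartments realizing the three vectorial distances''. But there is in general no single apartment containing $x$, $z$, and $y$, so you cannot add the three displacement vectors in one affine space; the reduction is not valid. Your first attempt via retraction is on the right track but incomplete (it trails off at ``equals\dots''). The clean argument is: choose an apartment $A$ containing $x$ and $y$, conjugate so that $A=\A$, $x=0$, and $y=d^v(x,y)\in Y^{++}$; then retract $z$ by $\qr_{-\infty}$. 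The images $\qr_{-\infty}([x,z])$ and $\qr_{-\infty}([z,y])$ are Hecke paths of shapes $d^v(x,z)$ and $d^v(z,y)$ from $0$ to $\qr_{-\infty}(z)$ and from $\qr_{-\infty}(z)$ to $y$ respectively. Part (b) then gives $\qr_{-\infty}(z)\leq_{Q^\vee}d^v(x,z)$ and $y-\qr_{-\infty}(z)\leq_{Q^\vee}d^v(z,y)$; adding yields $d^v(x,y)=y\leq_{Q^\vee}d^v(x,z)+d^v(z,y)$. The key choice is to put $x$ and $y$ (not $x$ and $z$) in the apartment of retraction, so that $y-x$ is literally $d^v(x,y)$.
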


\begin{NB} The last paragraph of b) above, applied to the image of a segment $[x,y]$ with $d^v(x,y)=\ql$, proves a Kostant convexity result for $\qr_{-\infty}$ (generalizing a part of \cite{Hi10}): for $x\in Y$, $\qr_{-\infty}(\{ y\in\SHI \mid d^v(x,y)=\ql\}) \subset x+conv(W^v.\ql)$.
\end{NB}

\subsection{Chambers of type $0$}\label{1.11}

Let $\mathscr C_0^{\pm}$ be the set of all {\it chambers of type $0$}, \ie all local chambers with vertices of type $0$ and {of sign $\pm$}.
{A local chamber} of vertex $x\in\SHI_0$ 
 will often be written  $C_x$  {and its direction} $C_x^v{=\vect {C_x}}$.
We consider $\mathscr C_0^{\pm}\times_\leq \mathscr C_0^{\pm}=\{(C_x,C_y)\in \mathscr C_0^{\pm}\times \mathscr C_0^{\pm}\mid x\leq y\}$.

\begin{prop*} \cite[5.4 and 5.1]{R11} Let $x,y\in\SHI$ with $x\leq y$.
We consider two local faces $F_x,F_y$ with respective vertices $x,y$.

\par a) $\{x,y\}$ is included in an apartment and two such apartments $A,A'$ are isomorphic by a Weyl-isomorphism in $G$, fixing $cl_A(\{x,y\})=cl_{A'}(\{x,y\})\supset[x,y]$. {This property is called the preordered convexity of intersections of apartments.}

\par b) There is an apartment containing $F_x$ and $F_y$, {unless $F_x$ and $F_y$ are respectively positive and negative.
In this case we have to assume moreover $x\stackrel{o}{<} y$ or $x=y$ to get the same result.}
\end{prop*}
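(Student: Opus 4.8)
The plan is to reduce both assertions to the structural properties of a masure recalled in \ref{1.3}, together with the strong transitivity of the $G$-action and the tangent-building picture, rather than to argue from the raw axioms.

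\emph{Part a).} One apartment is immediate: by the very definition of the preorder $\le$ on $\SHI$ (property d) of \ref{1.3}), $x\le y$ means there is an apartment $A$ with $x,y\in A$ and $x\le_A y$, so $[x,y]\subseteq A$ is a preordered segment. For the rest, I would apply property a) of \ref{1.3} to the preordered segment $F=[x,y]$: for any other apartment $A'$ containing $x$ and $y$ (hence, $\le$ being well defined, containing that preordered segment), it gives $cl_A([x,y])\subseteq A\cap A'$ together with a Weyl-isomorphism $A\to A'$ fixing $cl_A([x,y])$ pointwise; by strong transitivity this isomorphism is induced by an element of $G$. Two elementary remarks then finish the proof. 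First, $cl_A(\{x,y\})=cl_A([x,y])\supseteq[x,y]$, because the enclosure is an intersection of half-apartments $D(\alpha,k_\alpha)$, each convex, hence containing $\{x,y\}$ if and only if containing $conv\{x,y\}=[x,y]$. Second, a Weyl-isomorphism carries walls to walls, hence enclosures to enclosures, so the fixed filter $cl_A(\{x,y\})$ coincides with $cl_{A'}(\{x,y\})$.

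\emph{Part b).} The task is to thicken the pair of vertices $\{x,y\}$ from a) into the pair of local faces $\{F_x,F_y\}$. I would first reduce to the case where $F_x,F_y$ are local chambers, since a local face is a face of a local chamber of the same sign and an apartment containing two chambers contains all their faces. The construction is then carried out near each vertex using the tangent buildings of \ref{1.3}: at $x$ the segment germ $[x,y)$ has sign $+$ (since $x\le y$), hence determines a face of $\sht^+_x\SHI$, and using gallery connectedness in that building together with the local gluing provided by the half-apartment lemma \cite[2.9]{R11} and property a) of \ref{1.3} applied step by step along a minimal gallery, one produces an apartment containing $F_x$ and the germ $[x,y)$; one does the symmetric thing at $y$, controlling the overlap via property c) of \ref{1.3} (or the underlying axioms of \cite{R11} on intersections of apartments). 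When $F_x$ and $F_y$ have compatible signs — both positive, both negative, or $F_x$ negative and $F_y$ positive — the germs $[x,y)$ at $x$ and $[y,x)$ at $y$ live in tangent buildings of the right sign and the gluing goes through with no extra hypothesis.

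The main obstacle is exactly the excluded configuration, $F_x$ positive and $F_y$ negative: one then works in $\sht^+_x\SHI$ at $x$ but in $\sht^-_y\SHI$ at $y$, and the two ends of $[x,y]$ sit in twinned — not identical — buildings, so the naive gluing cannot by itself yield a single apartment. It can be repaired only when the segment is spherical, i.e. $x\stackrel{o}{<}y$, in which case $[x,y)$ and $[y,x)$ are generic segment germs, hence contained in spherical sector-face germs to which properties b) and c) of \ref{1.3} apply directly; or degenerate, i.e. $x=y$, in which case the claim reduces to the standard fact that in the twinned pair $(\sht^+_x\SHI,\sht^-_x\SHI)$ any positive face and any negative face lie in a common twin apartment. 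Making this case precise — and checking that the apartment produced really contains $F_x$ and $F_y$, not merely their germs along $[x,y]$ — is where the genuine care lies; this is exactly the content of Propositions 5.1 and 5.4 of \cite{R11}, to which I would refer for the complete verification.
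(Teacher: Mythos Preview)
The paper itself gives no proof of this proposition: it is stated as a quotation of \cite[5.4 and 5.1]{R11} and used as a black box. So there is no ``paper's own proof'' to compare against; the comparison is between your sketch and the cited source.

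That said, your outline is faithful to how those results are actually obtained. For part a) your reduction to property \ref{1.3}.1.a applied to the preordered segment $[x,y]$, together with the observation $cl_A(\{x,y\})=cl_A([x,y])$ by convexity of half-apartments, is exactly the right move. For part b) you correctly identify the mechanism (tangent buildings, minimal galleries, the half-apartment lemma \ref{1.3}.3) and, more importantly, you correctly isolate \emph{why} the positive/negative case is the delicate one: the germs $[x,y)$ and $[y,x)$ then live in opposite halves of the twinning at $x$ and $y$, and one needs either genericity ($x\stackrel{o}{<}y$) to make them spherical, or $x=y$ to reduce to a single twin apartment. Your own final sentence, deferring the careful verification to \cite{R11}, is precisely what the paper does; you have simply unpacked more of the argument than the authors chose to.
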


\parni{\bf Consequence.}  We define $W^+ = W^v\ltimes Y^+$ which is a subsemigroup of $W$.

\par If $C_x^{\pm}\in\SHC_0^{\pm}$ {and $0≤x$}, we know by b) above, that there is an apartment $A$ containing $C_0^{\pm}$ and $C_x$.
But all apartments containing $C_0^{\pm}$ are conjugated to $\A$ by $K_I^{\pm}$ (by \ref{1.3}.1.a), so there is $k\in K_I^{\pm}$ with $k^{-1}.C_x\subset\A$.
 Now the vertex $k^{-1}.x$ of $k^{-1}.C_x$ satisfies $k^{-1}.x\geq0$, so there is $\mathbf{w}\in W^+$ such that $k^{-1}.C_x=\mathbf{w}.C_0^{\pm}$.

\par When $g\in G^+$, $g.C_0^{\pm}$ is in $\SHC_0^{\pm}$ and there are $k\in K_I^{\pm}$, $\mathbf{w}\in W^+$ with $g.C_0^{\pm}=k.\mathbf{w}.C_0^{\pm}$, \ie $g\in K_I^{\pm}.W^+.K_I^{\pm}$.
We have proved the {\it Bruhat decomposition} $G^+=K_I^{\pm}.W^+.K_I^{\pm}$.
{Similarly we have the {\it Birkhoff decomposition} $G^+=K_I^-.W^+.K_I^+$, but perhaps not $G^+=K_I^+.W^+.K_I^-$.}



\begin{prop}\label{1.12} \cite[1.12]{BPGR16} In the situation of Proposition \ref{1.11},

\par a) If $x\stackrel{o}{<} y$ or $F_x$ and $F_y$ are respectively  negative and positive,  any two apartments $A,A'$ containing $F_x$ and $F_y$ are isomorphic by a Weyl-isomorphism in $G$ fixing the convex hull of $F_x$ and $F_y$ (in $A$ or $A'$).

\par b) If $x=y$ and the directions of $F_x,F_y$ have the same sign, any two apartments $A,A'$ containing $F_x$ and $F_y$ are isomorphic by a Weyl-isomorphism in $G$, $\qf:A\to A'$, fixing $F_x$ and $F_y$.
 If moreover $F_x$ is a local chamber, any minimal gallery from $F_x$ to $F_y$ is fixed by $\qf$ (and in $A\cap A'$).

 \par c) If $F_x$ and $F_y$ {are positive (resp. negative)} and $F_y$ {(resp. $F_x$)} is spherical, any two apartments $A,A'$ containing $F_x$ and $F_y$ are isomorphic by a Weyl-isomorphism  in $G$ fixing $F_x$ and $F_y$.
 
 \par {d) Any isomorphism $\qf:A\to A'$ fixing a local facet $F\subset A\cap A'$ fixes $\overline F$.}

\end{prop}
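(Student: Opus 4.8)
The plan is to use the fact that an isomorphism of apartments is affine, so that fixing a germ whose direction is relatively open in its support forces fixing the whole affine support; the closure germ $\overline F$ has its direction $\overline{F^v}$ inside that same support, so it gets fixed for free.

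First I would fix notation and record the affine picture. Write $F=F^\ell(x,F^v)=\mathrm{germ}_x(x+F^v)$, so that $\overline F=\overline{F^\ell}(x,F^v)=\mathrm{germ}_x(x+\overline{F^v})$ and $supp(F)=x+supp(F^v)$. Each apartment of type $\A$ carries a canonical affine structure (transported from $\A=V$ through any Weyl-isomorphism $f\in Isom^W(\A,A)$, two such differing by an element of $W^a\subset Aut(\A)$ which is affine), and with respect to these structures $\qf:A\to A'$ is affine: by definition $f'^{-1}\circ\qf\circ f\in Aut(\A)$ is an affine bijection of $\A$ (see \ref{1.2}.2), and $f,f'$ are affine identifications, so $\qf=f'\circ(f'^{-1}\qf f)\circ f^{-1}$ is affine. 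Moreover, since $F$ is a local face contained in $A\cap A'$, Lemma \ref{1.3}.1.a gives $cl_A(F)\subset A\cap A'$, and $\overline F\subset cl_A(F)$ by the description of closures and enclosures in \ref{1.2b}; hence $\overline F\subset A\cap A'$, so the assertion ``$\qf$ fixes $\overline F$'' is meaningful, and any sufficiently small representative of $\overline F$ lies in $A\cap A'$.

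Next I would run the elementary affine argument. By hypothesis there is $S\in F$ fixed pointwise by $\qf$; shrinking, take $S=(x+F^v)\cap\mathcal V$ for a neighbourhood $\mathcal V$ of $x$ in $A$. Because $F^v$ is relatively open in $supp(F^v)$, the set $S$ is a relatively open (hence affinely spanning) subset of the affine subspace $E:=supp(F)=x+supp(F^v)$, and $x\in\overline S$. Since $\qf$ is affine, continuous, and fixes $S$ pointwise, it fixes $x$, and $\vect{\qf}$ fixes every direction $s-x$ ($s\in S$); as these directions span $supp(F^v)$, the linear map $\vect{\qf}$ restricts to the identity on $supp(F^v)$, so $\qf$ is the identity on $E$ (on the portion of $E$ lying in $A$, which by the previous paragraph lies in $A\cap A'$). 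Finally $\overline{F^v}\subset supp(F^v)$, so a small representative $(x+\overline{F^v})\cap\mathcal V'$ of $\overline F$ is contained in $E\cap(A\cap A')$ and is therefore fixed pointwise by $\qf$; hence $\qf$ fixes $\overline F$, together with $x$ (and $\overline{F^\ell}$).

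The only delicate points are bookkeeping ones rather than geometric: translating ``$\qf$ fixes the filter $F$'' into ``$\qf$ fixes pointwise a relatively open piece of $supp(F)$'', and verifying that the closure germ $\overline F$ is represented inside $A\cap A'$ so that the conclusion is a genuine pointwise-fixing statement. Both are settled by Lemma \ref{1.3}.1.a and the conventions of \ref{1.2b}; once $\qf$ is known to be affine, the rest is immediate and requires no input specific to masures.
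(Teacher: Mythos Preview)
Your argument captures the right intuition—that $\qf$ is affine and therefore determined on an affine subspace by its values on an open piece—but it glosses over the one point that actually needs the structure of the masure. When you write ``$\vect{\qf}$ fixes every direction $s-x$'' and conclude that $\qf$ is the identity on $E$, you are implicitly identifying the vector $s-x$ computed in $A$ with the vector $s-x$ computed in $A'$. But $\vect{\qf}$ is a linear map between two \emph{different} vector spaces (the directions of $A$ and of $A'$), and the equalities $\qf(s)=s$, $\qf(x)=x$ only say that $\vect{\qf}$ sends $(s-x)_A$ to $(s-x)_{A'}$; they do not say these two vectors are ``the same''. Concretely, for $z\in\overline F$ you want $\qf(z)=z$, i.e.\ that the affine combination expressing $z$ in terms of points of $S$ in $A$ yields the same point of $\SHI$ when computed in $A'$. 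That is precisely the statement that the affine structures induced by $A$ and $A'$ on the relevant subset coincide, and it is not automatic.

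The paper supplies this compatibility via Proposition~\ref{1.11}.a: for $z'\in S$ with $[z',z]$ preordered, the segment $[z',z]$ lies in $A\cap A'$ and the identity of $[z',z]$ is an affine bijection for \emph{both} induced structures; hence $\qf|_{[z',z]}$ and $\mathrm{id}|_{[z',z]}$ are two affine maps agreeing near $z'$, so $\qf(z)=z$. Your argument can be repaired in the same spirit using the Weyl-isomorphism $\psi:A\to A'$ fixing $cl_A(F)$ furnished by \ref{1.3}.1.a (which you cite, but only for the inclusion $\overline F\subset A\cap A'$): then $\psi^{-1}\circ\qf:A\to A$ is an affine automorphism of a single apartment fixing $S$, hence fixing its affine span, hence fixing $\overline F$; since $\psi$ already fixes $\overline F$, so does $\qf$. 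Either way, the step you call ``bookkeeping'' is in fact the substantive one, and your final remark that the proof ``requires no input specific to masures'' is not quite right: the compatibility of the two affine structures on $A\cap A'$ is exactly the masure-theoretic input.
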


\cache{The conclusion in c) above is less precise than in a) or in \ref{1.11}.a.
We may actually improve it when the hovel is assumed ``parahoric'' (as explained below).
Then, using the notion of half good fixers, we may assume that the isomorphism in c) above fixes some kind of enclosure of $F_x$ and $F_y$ (containing the convex hull).}

\cache{  
\begin{proof} The assertion a) (resp. b)) is Proposition 5.5 (resp. 5.2) of \cite{R11}.
To prove c) we improve a little the proof of 5.5 in \lc and use the classical trick that says that it is enough to assume that, either $F_x$ or $F_y$ is a local chamber.
We assume now that $F_x=C_x$ is a local chamber; the other case is analogous.

\par We consider an element $\QO_x$ (resp. $\QO_y$) of the filter $C_x$ (resp. $F_y$) contained in $A\cap A'$.
We have $x\in\overline{\QO_x}$, $y\in\overline{\QO_y}$ and one may suppose $\QO_x$ open and  $\QO_y$ open in the support of $F_y$.
There is an isomorphism $\qf:A\to A'$ fixing $\QO_x$.
Let $y'\in\QO_y$, we want to prove that $\qf(y')=y'$.
As $F_y$ is spherical, $x\leq y\stackrel{o}{<}y'$, hence $x\stackrel{o}{<}y'$.
So $x'\leq y'$ for any $x'\in \QO_x$ ($\QO_x$ sufficiently small).
Moreover $[x',y']\cap\QO_x$ is an open neighbourhood of $x'$ in $[x',y']$. By the following lemma, we get $\qf(y')=y'$.
\end{proof}

\begin{lemm*} Let us consider two apartments $A,A'$ in $\SHI$, a subset $\QO\subset A\cap A'$, a point $z\in A\cap A'$ and an isomorphism $\qf:A\to A'$ fixing (pointwise) $\QO$. We assume that there is $z'\in\QO$ with $z'\leq z$ or $z'\geq z$ and $[z',z]\cap\QO$ open in $[z',z]$; then $\qf(z)=z$.
\end{lemm*}

\begin{NB} This lemma tells, in particular, that any isomorphism $\qf:A\to A'$ fixing a local facet $F\subset A\cap A'$ fixes $\overline F$.
\end{NB}

\begin{proof} $\qf\vert_{[z',z]}$ is an affine bijection of $[z',z]$ onto its image in $A'$, which is the identity in a neighbourhood of $z'$.
But \ref{1.11}.a) tells that $[z',z]\subset A\cap A'$ and the identity of $[z',z]$ is an affine bijection (for the affine structures induced by $A$ and $A'$).
Hence $\qf\vert_{[z',z]}$ coincides with this affine bijection; in particular $\qf(z)=z$.
\end{proof}
} 

\subsection{$W-$distance}\label{1.13} 

Let $(C_x,C_y)\in\mathscr C_0^{\pm}\times_\leq \mathscr C_0^{\pm}$, there is an apartment $A$ containing $C_x$ and $C_y$.
We identify $(\A,C_0^{\pm})$ with $(A,C_x)$ \ie we consider the unique $f\in Isom^W_\R(\A,A)$ such that $f(C_0^{\pm})=C_x$.
 Then $f^{-1}(y)\geq 0$ and there is $\mathbf{w}\in W^+$ such that  $f^{-1}(C_y)=\mathbf{w}.C_0^{\pm}$.
 By \ref{1.12}.c, $\mathbf{w}$ does not depend on the choice of $A$.

\par We define the {\it $W-$distance} between the two local chambers $C_x$ and $C_y$ to be this unique element: $d^W(C_x,C_y) = \mathbf{w}\in W^+ = Y^+\rtimes W^v$.
 If $\mathbf{w}=t_\ql.w$, with $\ql\in Y^+$ and $w\in W^v$, we write also {$d^v(C_x,y)=\ql$}.
 As $\leq$ is $G-$invariant, {$d^W$ or $d^v$} is also $G-$invariant.
 When $x=y$, this definition {of $d^W$} coincides with the one in \ref{1.3}.2.

 \par If $C_x,C_y,C_z\in\SHC_0^{\pm}$, with $x\leq y\leq z$, are in a same apartment, we have the Chasles relation: $d^W(C_x,C_z)=d^W(C_x,C_y).d^W(C_y,C_z)$.

\par When $C_x=C_0^{\pm}$ and $C_y=g.C_0^{\pm}$ (with $g\in G^+$), $d^W(C_x,C_y)$ is the only $\mathbf{w}\in W^+$ such that $g\in K_I^{\pm}.\mathbf{w}.K_I^{\pm}$.
We have thus proved the uniqueness in Bruhat decomposition: $G^+=\coprod_{\mathbf{w}\in W^+}\,K_I^{\pm}.\mathbf{w}.K_I^{\pm}$.

 \par The $W-$distance classifies  the orbits of $K_I^{\pm}$ on $\{C_y\in\SHC_0^{\pm}\mid y\geq 0\}$, hence also the orbits of $G$ on $\mathscr C_0^{\pm}\times_\leq \mathscr C_0^{\pm}$.

\par {Similarly we may define a $W-$codistance $d^{*W}$ on $\SHC_0^-\times_≤\SHC_0^+$ with values in $W^+$. It classifies the orbits of $G$ on $\SHC_0^-\times_≤\SHC_0^+$ and $d^{*W}(C_x,C_y)$ is the codistance defined in  \ref{1.3}.2 when $x=y$.}


\subsection{The Kac-Moody examples}\label{1.13c} 

\par Let $G$ be an almost split Kac-Moody group over a non archimedean complete field $\shk$.
 We suppose moreover the valuation of $\shk$ discrete and its residue field $\qk$  perfect.
 Then there is a masure $\SHI$ on which $G$ acts by vectorially Weyl automorphisms. 
 If $\shk$ is a local field (\ie $\qk$ is finite), then we are in the situation described above.
 This is the main result  of \cite{Ch10}, \cite{Ch11} and \cite{R13}. 
 This generalizes the Bruhat-Tits result, dealing with $G$ reductive.
 In the group $G$, there are root groups $U_\qa$, indexed by the relative real root system $\QF_{rel}$, with good commutation relations.
 But $\QF_{rel}$ may be different from $\QF$ as chosen above in \ref{1.2}.1. 
 We get only an identification of the root rays $\R_+.\qa$ associated to the $\qa\in \QF$ or the $\qa\in  \QF_{rel}$.
 
 \par Suppose now $G$ split  and over a field $\shk$ endowed with a discrete  valuation with finite residue field.
 Then the same results as above hold and we have $\QF= \QF_{rel}$.
  
\begin{rema}\label{1.14} A general abstract situation of pairs $(\SHI,G)$ involving root groups $U_\qa$ (as above in \ref{1.13c}) is axiomatized in \cite{R13} (under the name of ``parahoric masure'').
 We do not use here this kind of hypothesis.
It may seem to be used in \cite{GR14}: some of the results or of the proofs in \lc are described using some groups  $U_{\qa,k}$ (essentially from just before 4.4 until 4.8).
But these results are actually correct as stated (\ie with the same hypotheses as above up to \ref{1.13}):  the group $\overline U_{\qa,k}=U_{\qa,k}/U_{\qa,k+}$ is simply a good parametrization of the set of minimal half apartments containing strictly $D(\qa,k)$; {it may be replaced by a set of representatives of $G_{D(\qa,k)}/G_{D(\qa,k+)}$}.

\end{rema}


\section{Convolutions: algebras, modules and Satake isomorphism}\label{s2}

\subsection{Vertices and chambers of type $0$}\label{2.1} 

\par We introduced above the sets $\SHI_0$ and $\shc_0^\pm$  of vertices of type $0$ and of positive or negative chambers of type $0$.
The group $G$ acts transitively on these sets.
The fundamental element of $\SHI_0$ (resp. $\shc_0^\pm$) is $0\in\A$ (resp. $C_0^\pm\subset\A$) and its stabilizer in $G$ is $K$ (resp. $K^\pm_I$).
 So $\SHI_0=G/K$ and $\shc_0^\pm=G/K_I^\pm$.
 
 \par If $E_1,E_2\in \SHI_0$ (\resp $\in\shc_0^\pm$), we write $E_1≤E_2$ when $origin(E_1)≤origin(E_2)$, where $origin(E_i)\in\SHI_0$ is the origin of $E_i$ (the vertex $E_i$ itself if $E_i\in\SHI_0$).
 We get thus a preorder.
 
 \par Let $\SHE=G/K_1$ be a family of objects as above.
   We define $\SHE\times_≤\SHE=\{(E_1,E_2)\in \SHE\times\SHE \mid E_1≤E_2\}$. 
   There is an action of $G$ on this set, and the set $(\SHE\times_≤\SHE)/G$ of $G-$orbits is in one to one correspondence with $K_1\backslash G^+/K_1$ via the map sending $K_1gK_1\in K_1\backslash G^+/K_1$ to the $G-$orbit of the pair $(E^f,g.E^f)$, where $E^f$ is the fundamental element in $\SHE$ (hence $K_1=G_{E^f}$).
   
 \par If $R$ is any commutative ring with unit, we write $\widehat\M^R(\SHE)=\widehat\M(\SHE)$ the $R-$module $Map_G(\SHE\times_≤\SHE,R)$ of all $G-$invariant functions on $\SHE\times_≤\SHE$ (\ie of functions on $K_1\backslash G^+/K_1$).
 We write $\M^R(\SHE)=\M(\SHE)$ the submodule of functions with support a finite union  of orbits of $G$ (\ie with finite support in $K_1\backslash G^+/K_1$).
 
 \par We may also define $\SHE\times_≥\SHE$ and $(\SHE\times_≥\SHE)/G=K_1\backslash G^-/K_1$ with $G^-=(G^+)^{-1}=\{g\in G\mid g.0≤0\}$.
 They are in bijections with $\SHE\times_≤\SHE$ and $(\SHE\times_≤\SHE)/G=K_1\backslash G^+/K_1$, via the maps $(E_1,E_2)\mapsto(E_2,E_1)$ and $K_1gK_1\mapsto K_1g^{-1}K_1$.
 We write $\widehat\M_≥(\SHE)$ or $\M_≥(\SHE)$ the corresponding sets of $G-$invariant functions; they are isomorphic to $\widehat\M(\SHE)$ or $\M(\SHE)$ via the map $\qi=\qi_{\SHE}$, $\qi\qf(E_2,E_1)=\qf(E_1,E_2)$, $\qi\qf(K_1gK_1)=\qf(K_1g^{-1}K_1)$.
 
  \par One considers also the $R-$module  $\widehat\SHF(\SHE)$ of the functions $\chi:\SHE\to R$ invariant by the action of the pointwise stabilizer $G_{-\infty}=G_{{\g S}_{-\infty}}$ of ${\g S}_{-\infty}$, \ie constant on the fibers of $\qr_{-\infty}$.
 We may also think of $\widehat\SHF(\SHE)$  as the $R-$module of functions on $\SHE(\A)=\{E\in\SHE \mid E\subset\A\}$.
 
 \par We write $\SHF_{fin}(\SHE)$ the $R-$submodule of functions with finite support (in $\SHE(\A)$).

 \subsection{Convolution}\label{2.3} 

 One considers a family $\SHE$  equal to $\SHI_0$ or $\shc_0^\pm$.
 For $\qf,\psi\in\widehat\M(\SHE)$, we define their convolution product (if it exists) as the function $\qf*\psi\in\widehat\M(\SHE)$ defined by:
  $$\qf*\psi(E_1,E_3)=\sum_{E_2\in\SHE}\,\qf(E_1,E_2)\psi(E_2,E_3)$$
 in this formula, we set $\qf(E_1,E_2)=0$ (resp. $\psi(E_2,E_3)=0$) when $E_1\not≤E_2$ (resp. $E_2\not≤E_3$).

  For $\qf\in \widehat\SHF(\SHE)$ and $\psi\in\widehat\M(\SHE)$, we define their convolution product (if it exists) as the function $\qf*\psi\in\widehat\SHF(\SHE)$ defined by:

  $$\qf*\psi(E_2)=\sum_{E_1\in\SHE}\,\qf(E_1)\psi(E_1,E_2)$$
 in this formula, we set also $\psi(E_1,E_2)=0$ when $E_1\not≤E_2$.
 
\par Actually we are led to look only at the case where $\qf$ and $\psi$ are the characteristic functions of some $G-$orbits in $\SHE\times_≤\SHE$ or some $G_{-\infty}-$orbit in $\SHE$  and we may also suppose $R=\Z$.
Then $\qf*\psi$ is well defined in $\widehat\M^\Z(\SHE)$ or $\widehat\SHF^\Z(\SHE)$, if we are willing to allow this function to take infinite values.
These values are called the {\it structure constants} of the convolution product on 
$\widehat\M(\SHE)$ or on $\widehat\SHF(\SHE) \times \widehat\M(\SHE)$.
 
 So the first problem for the existence of the convolution product is to avoid these infinite values.
 The second problem is about supports: we are interested in functions in $\widehat\M(\SHE)$ or $\widehat\SHF(\SHE)$ with some conditions of support.
 When $\qf$ and $\psi$ satisfy the corresponding conditions, does $\qf*\psi$ satisfy also the appropriate condition?
 The answer (in some cases) is given in \cite{GR14} or \cite{BPGR16} and recalled below.
 
 \par The convolution product has interesting properties, when it exists: it is $R-$bilinear and associative.
 More precisely the following equalities hold whenever each convolution product appearing in the equality is defined.

\par $(a_1\qf_1+a_2\qf_2)*(b_1\psi_1+b_2\psi_2)=\sum_{i,j}\,a_ib_j\qf_i*\psi_j$ \quad and \quad $(\qf*\psi)*\theta=\qf*(\psi*\theta)$

\parni for $a_i,b_j\in R$ and $\qf,\qf_i,\psi,\psi_j,\qth$ are in $\widehat\M(\SHE)$ or $\widehat\SHF(\SHE)$.

\begin{rema*} We may also consider the modules $\widehat\M_≥(\SHE)$; we saw in \ref{2.1} that they are isomorphic, via $\qi$, to $\widehat\M(\SHE)$.
 Moreover, for $\qf,\psi\in\widehat\M(\SHE)$, then  $\qi(\qf),\qi(\psi)\in\widehat\M_≥(\SHE)$ and $\qi(\psi)*\qi(\qf)=\qi(\qf*\psi)$ in $\widehat\M_≥(\SHE)$, if at least one of these two convolution products is defined.
 
\end{rema*}

 \subsection{Algebras and modules}\label{2.4} 
 
 \parni{\bf a)} For $(E_1,E_2)\in\SHE\times_≤\SHE$, there is  an apartment $A$ containing $E_1$ and $E_2$ (\cf \ref{1.11}.b).
 Moreover by \ref{1.3}.1.c and \ref{1.12},  two apartments containing $E_1$ and $E_2$ are isomorphic by a Weyl isomorphism (induced by a $g\in G$) fixing $E_1\cup E_2$.
 So the classification of the $G-$orbits in $\SHE\times_≤\SHE$ is equivalent to the classification of the $W-$orbits in $(\SHE\times_≤\SHE)(\A):=\{(E_1,E_2)\in \SHE\times_≤\SHE \mid E_1,E_2\subset\A \}$.
 
 \par For the most important examples, we give now a more precise parametrization of these orbits and a name for the corresponding characteristic functions: they constitute the canonical basis of $\M(\SHE)$ or $\SHF_{fin}(\SHE)$.
 
 \medskip
 \parni{\bf b)} $\M(\SHI_0)$ and the spherical Hecke algebra $\shh\subset \widehat \M(\SHI_0)$ : The vectorial distance $d^v:\SHI_0\times_≤\SHI_0\to Y^{++}$ classifies the orbits of $G$ (\ie $K\backslash G^+/K\simeq Y^{++}$).
 So the canonical basis  of $\M(\SHI_0)$ is $(c_\ql)_{\ql\in Y^{++}}$, where, for $x,y\in\SHI_0$, $c_\ql(x,y)=1$ if $x≤y$, $d^v(x,y)=\ql$ and $c_\ql(x,y)=0$ otherwise.
 
  \par The convolution on $\widehat\M^R(\SHI_0)$ is studied in \cite{GR14}.
 To get an algebra inside $\widehat\M^R(\SHI_0)$, it is natural to consider the $R-$submodule $\M^R_{af}(\SHI_0)$ of functions $\qf=\sum_{\ql\in Y^{++}}\,a_\ql.c_\ql:\SHI_0\times_≤\SHI_0\to R$ such that the support $supp(\qf)=\{\ql\in Y^{++} \mid a_\ql≠0\}$ is almost finite, \ie:
 \medskip
 \par \qquad there exist $\ql_1,\cdots,\ql_n\in Y^{++}$ such that $supp(\qf)\subset \bigcup_{i=1}^n\,(\ql_i- Q_+^\vee)$.
\medskip 
 \par Then $\shh_R=\shh:=\M^R_{af}(\SHI_0)$ is an algebra, the {\it spherical Hecke algebra}.

 \medskip
 \parni{\bf c)} $\M(\SHC^±_0)$ and the Iwahori-Hecke algebras ${^I\shh}^±$: The W-distance $d^W:\SHC_0^±\times_≤\SHC^±_0\to W^+=W^v\ltimes Y^{+}$ classifies the orbits of $G$ (\ie $K_I^±\backslash G^+/K_I^±\simeq W^+$).
 So the canonical basis  of $\M(\SHC^±_0)$ is $(T^±_{\mathbf w})_{\mathbf w\in W^+}$, where, for $C_x,C_y\in\SHC^±_0$, $T^±_{\mathbf w}(C_x,C_y)=1$ if $C_x≤C_y$, $d^w(C_x,C_y)=\mathbf w$ and $T^±_{\mathbf w}(C_x,C_y)=0$ otherwise.
 
  \par In \cite{BPGR16} we studied the convolution product on ${^I\shh}={^I\shh^+}:=\M(\SHC_0^+)$.
  It is always well defined and in ${^I\shh}$.
  So ${^I\shh}$ is an $R-$algebra: the (positive) {\it Iwahori-Hecke algebra}.
  We elucidated its structure: if $\Z\subset R$ and the parameters $q_i,q'_i$ are invertible in $R$, we may define a basis of ${^I\shh}$ and explicit relations (including some ``Bernstein-Lusztig'' relations) defining the multiplication.
  Actually for good $R$, the algebra ${^I\shh}$ is a subalgebra of a specialization of the Bernstein-Lusztig algebra ${^{BL}\shh}$, we shall speak of in §\ref{s5}, see \ref{5.2}.6.

   \par As explained in Remark \ref{2.3}, ${^I\shh}^-:=\M(\SHC_0^-)$ (with its convolution) is anti-isomorphic (via $\qi$) to $\M_≥(\SHC_0^-)$.
   But now $\M_≥(\SHC_0^-)$ has the same structure as ${^I\shh}=\M(\SHC_0^+)$: we just have to exchange all the signs $±$, and, as the choice of what is $+$ or $-$ in $\SHI$ was arbitrary, we get the same explicit structure.
   
 \par In particular the convolution is always well defined in ${^I\shh^-}$ and we may describe precisely the structure of this algebra (called the negative Iwahori-Hecke algebra).
   Moreover ${^I\shh^-}$ is anti-isomorphic to $^I\shh$ by a map that we still call $\qi$.
   We shall explain in \ref{5.2}.6 a more precise link with ${^{BL}\shh}$ .
   
 \medskip
 \parni{\bf d)} $\SHF_{fin}=\SHF_{fin}(\SHI_0)$ and the $\shh-$module $\SHF$: the functions $\qf$ in $\SHF_{fin}$ may be considered as functions on $\SHI_0(\A)=\A_0=Y$.
 So the canonical basis is $(\chi_\qm)_{\qm\in Y}$, where for $x\in\SHI_0$, $\chi_\qm(x)=1$ if $\qr_{-\infty}(x)=\qm$ and $\chi_\qm(x)=0$ otherwise.
 
 \par One defines also the $R-$submodule $\SHF$ of $\widehat\SHF(\SHI_0)$ by a condition of almost finite support:
\medskip
\par for $\chi=\sum_{\qm\in Y}\,a_\qm.\chi_\qm\in\widehat\SHF(\SHI_0)$, the support $supp(\chi)=\{\qm\in Y \mid a_\qm≠0 \}$ is almost finite if there exist $\ql_1,\cdots,\ql_n\in Y^{}$ such that $supp(\chi)\subset \bigcup_{i=1}^n\,(\ql_i- Q_+^\vee)$.
\medskip
\par Then, by \cite[5.2]{GR14}, the convolution $\SHF\times\shh \to \SHF$ is well defined.
 So $\SHF$ is a right $\shh-$module.
    
     \subsection{Left actions of $R[[Y]]$ on $\SHF$}\label{2.10} 

\par We define the following completion $R[[Y]]$ of the group algebra $R[Y]$:
it is the set of all $f=\sum_{\ql\in Y} a_\ql e^\ql\in R^Y$ such that $ supp(f)=\{\ql\in Y\mid a_\ql\not=0\}$ is in  $\cup_{j=1}^n(\mu_j- Q^\vee_+)$  for some $\qm_j\in Y$.
Actually we may suppose $\qm_j\in supp(f)$ by the Lemma \ref{2.10a} below, indicated to us by Auguste H\'ebert.
Clearly $R[[Y]]$ is a commutative $R-$algebra (with $e^\ql.e^\qm=e^{\ql+\qm}$). 

\par The formula $(f.\chi)(\qm)=\sum_{\ql\in Y}\,a_\ql\chi(\qm-\ql)$, for $f=\sum a_\ql e^\ql\in R[[Y]]$, $\chi\in\SHF$ and $\qm\in Y$, defines an element $f.\chi\in\SHF$; in particular $e^\ql.\chi_\qm=\chi_{\qm+\ql}$.
 By \cite[5.1, 5.2]{GR14}, the map $R[[Y]]\times\SHF\to\SHF$, $(f,\chi)\mapsto f.\chi$ makes $\SHF$ into a free $R[[Y]]-$module of rank $1$, with any $\chi_\qm$ as basis element.
 Moreover the actions on $\SHF$ of $R[[Y]]$ (on the left) and of $\shh$ (on the right) commute.

\par 
We may slightly modify this action of $R[[Y]]$ when $R$ is large enough.
As in \cite[5.3.2]{GR14}, we choose an homomorphism $\qd^{1/2}:Y\to \R_+^*$ such that $\qd=(\qd^{1/2})^2$, restricted to $ Q^\vee$, sends $\sum_{i\in I}\, a_i.\qa_i^\vee$ to $\prod_{i\in I}\,(q_iq'_i)^{a_i}$.
We suppose now that $R$ contains the image of $\qd^{1/2}$.
Then the formula :

\par $(f\Box\chi)(\qm)=\sum_{\ql\in Y}a_\ql.\qd^{-1/2}(\ql).\chi(\qm-\ql)$, for $f=\sum a_\ql.e^\ql\in R[[Y]]$, $\chi\in\SHF$ and $\qm\in Y$, 
\medskip
\parni defines an element $f\Box\chi\in\SHF$; in particular $e^\ql\Box\chi_\qm=\qd^{-1/2}(\ql).\chi_{\qm+\ql}$.
We get thus a new structure of free $R[[Y]]-$module of rank $1$ on $\SHF$, commuting with the action of $\shh$.
 
 \par{Note that, when all $q_i,q'_i$ are equal to some $q$, a good choice for $\qd^{1/2}$ is given by $\qd^{1/2}(\qm)=q^{\qr(\qm)}$, where $\qr\in X$ takes value $1$ on each simple coroot.}
 

{\begin{lemm}\label{2.10a} \cite{AbH17} Let $A$ be a subset of $Y$ and $y$ an element in $Y$. Then the intersection $A\cap(y-Q_+^\vee)$ is included in a finite union of sets $a_i-Q_+^\vee$ for some $a_i\in A$
\end{lemm}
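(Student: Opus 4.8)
The plan is to transport the problem, via the natural coordinates on $y-Q_+^\vee$, to Dickson's lemma on $\N^I$. Because the family $(\qa_i^\vee)_{i\in I}$ is free (see \ref{1.1}) and $I$ is finite, each element of $y-Q_+^\vee$ is uniquely of the form $y-\sum_{i\in I}n_i\qa_i^\vee$ with $n_i\in\N$, so $\qf\colon y-Q_+^\vee\to\N^I$, $\qf\bigl(y-\sum_i n_i\qa_i^\vee\bigr)=(n_i)_{i\in I}$, is a bijection. For $a,a'\in y-Q_+^\vee$ one checks directly, again using freeness, that $a\in a'-Q_+^\vee$ (that is, $a\leq_{Q^\vee}a'$) if and only if $\qf(a)\geq\qf(a')$ for the componentwise order on $\N^I$; thus $\qf$ is an order-reversing bijection.

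Next I would set $S=\qf\bigl(A\cap(y-Q_+^\vee)\bigr)\subseteq\N^I$. If $S=\emptyset$ the empty union already works, so assume $S\neq\emptyset$. Two elementary facts about $(\N^I,\leq)$ are then used. First, it is well-founded: the map $(n_i)_i\mapsto\sum_{i\in I}n_i$ strictly decreases along any strictly decreasing chain, so no such chain is infinite; hence every element of $S$ lies above (for $\leq$) some minimal element of $S$. Second, by Dickson's lemma the set $\min(S)$ of minimal elements of $S$ is finite, say $\min(S)=\{s_1,\dots,s_n\}$. (For a self-contained argument, Dickson's lemma follows by induction on $|I|$: deleting one coordinate and combining the case $|I|=1$ with the pigeonhole principle rules out infinite antichains, which together with well-foundedness forces $\min(S)$ to be finite.)

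Finally I would put $a_j=\qf^{-1}(s_j)\in A\cap(y-Q_+^\vee)\subseteq A$ for $j=1,\dots,n$. Given any $a\in A\cap(y-Q_+^\vee)$ we have $\qf(a)\in S$, hence $\qf(a)\geq s_j$ for some $j$, which by the order-reversing property of $\qf$ means $a\in a_j-Q_+^\vee$. Therefore $A\cap(y-Q_+^\vee)\subseteq\bigcup_{j=1}^n(a_j-Q_+^\vee)$, which is the assertion. The only thing to watch is the bookkeeping: keeping straight the reversal of order under $\qf$, and separating the two finiteness inputs — well-foundedness, which is what allows the conclusion for an \emph{arbitrary} (possibly infinite) $A$ that each of its relevant points sits below one of the chosen $a_j$, and Dickson's lemma proper, which bounds how many $a_j$ are needed. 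Neither step is a genuine obstacle once the translation to $\N^I$ is in place.
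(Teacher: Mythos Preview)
Your proof is correct and follows essentially the same approach as the paper: both arguments take the minimal elements of $A\cap(y-Q_+^\vee)$ for $\leq_{Q^\vee}$ (equivalently, the minimal $\qn\in Q_+^\vee$ with $y-\qn\in A$) and argue that this set is finite because it is an antichain in an order isomorphic to $(\N^I,\leq)$. The only difference is cosmetic: the paper cites \cite[Lemma~2.2]{He17} for the finiteness of antichains, whereas you spell out Dickson's lemma and the well-foundedness directly.
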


\begin{proof} Let $N$ be the set of the $\qn\in Q_+^\vee$ such that $y-\qn\in A$, that are minimal (with respect to $\leq_{Q^\vee}$) for this property.
Clearly $A\cap(y-Q_+^\vee)\subset \bigcup_{\qn\in N} (y-\qn- Q_+^\vee)$.
The elements of $N$ are not comparable for $\leq_{Q^\vee}$, so $N$ is finite by \cite[Lemma 2.2]{He17}.
\end{proof}}
 
 \subsection{The Satake isomorphism $\shs$}\label{2.11} 

\parni{\bf 1)}  As $\SHF$ is a free $R[[Y]]-$module of rank one (for $.$ or $\Box$), we have $End_{R[[Y]]}(\SHF)=R[[Y]]$. 
So the right action of $\shh$ on the $R[[Y]]-$module $\SHF$ gives an algebra homomorphism $\shh\to R[[Y]]$ for each structure $.$ or $\Box$. More precisely:

\par\qquad $\shs_*:\shh\to R[[Y]]$ is such that $\chi*\qf=\shs_*(\qf).\chi$ for any $\qf\in\shh$ and any $\chi\in\SHF$.

\par\qquad $\shs:\shh\to R[[Y]]$ is such that $\chi*\qf=\shs(\qf)\Box\chi$ for any $\qf\in\shh$ and any $\chi\in\SHF$.

\par Clearly, if $\shs_*(\qf)=\sum a_\qm.e^\qm$, then $\shs(\qf)=\sum a_\qm.\qd^{1/2}(\qm).e^\qm$.

\medskip
\parni{\bf 2)} There is a linear action of $W^v$ on $Y$, hence an algebra action on $R[Y]$, by setting $w.e^\ql=e^{w\ql}$, for $w\in W^v$ and $\ql\in Y$.
This action does not extend to $R[[Y]]$.
 But we define $R[[Y]]^{W^v}$ as the $R-$algebra of all $f=\sum \, a_\ql e^\ql\in R[[Y]]$ such that $a_\ql=a_{w\ql}$ (for all $w\in W^v$ and $\ql\in Y$).
{By the following Remark \ref{2.12}, Lemma \ref{2.10a} and Lemma \ref{1.10}.a, we may suppose $supp(f)\subset  \bigcup_{j=1}^n\,(\qm_j- Q_+^\vee)$, for some $\qm_j\in Y^{++}$.}

\begin{theo}\label{2.12}
The spherical Hecke algebra $\shh$ is isomorphic, via $\shs$, to the commutative algebra   $R[[Y]]^{W^v}$ of Weyl invariant elements in $R[[Y]]$.
\end{theo}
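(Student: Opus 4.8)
The plan is to show that $\shs$ is a well-defined algebra homomorphism $\shh \to R[[Y]]^{W^v}$ and then construct an inverse. First I would recall from \ref{2.11}.1 that $\shs$ is already known to be an algebra homomorphism $\shh \to R[[Y]]$, obtained from the right $\shh$-action on the rank-one free $R[[Y]]$-module $(\SHF, \Box)$; so the content of the theorem is (i) the image lies in $R[[Y]]^{W^v}$, (ii) $\shs$ is injective, and (iii) $\shs$ is surjective onto $R[[Y]]^{W^v}$. For (i), the natural approach is geometric: compute $\shs(c_\ql)$ — equivalently the coefficients of $\chi_0 * c_\ql$ in the basis $(\chi_\qm)$ — via the retraction $\qr_{-\infty}$ and the Hecke-path description of \ref{1.10}. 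The coefficient of $e^\qm$ in $\shs_*(c_\ql)$ counts (with the convolution weights) the points $x \in \SHI_0$ with $0 \le x$, $d^v(0,x) = \ql$ and $\qr_{-\infty}(x) = \qm$; by the Kostant convexity statement in the N.B. after Lemma \ref{1.10}, these $\qm$ lie in $conv(W^v.\ql)$, and one uses the independence of the count on the choice of negative sector-germ (all sector-germs $\g S_{-\infty}$ of a given direction are conjugate, and $W^v$ permutes the directions) to get $a_\qm = a_{w\qm}$ after the $\qd^{1/2}$-twist; the twist is exactly designed (cf. \cite[5.3]{GR14}) so that the $W^v$-asymmetry coming from $\qd$ cancels. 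This is the step I expect to be the main obstacle, since making "independence of the choice of $\g S_{-\infty}$" precise requires the change-of-retraction arguments and careful bookkeeping of the parameters $q_M$; it is essentially the Satake-type symmetry computation and is the technical heart.

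Next, for injectivity and the support refinement: by Remark \ref{2.12}, Lemma \ref{2.10a} and Lemma \ref{1.10}.a, every $f \in R[[Y]]^{W^v}$ can be written with $supp(f) \subset \bigcup_{j=1}^n (\qm_j - Q_+^\vee)$ with $\qm_j \in Y^{++}$, and the "leading terms" are indexed by $Y^{++}$. I would show that $\shs(c_\ql)$ has "leading term" (i.e. the coefficient on the $W^v$-orbit of $\ql$, which is extremal in $conv(W^v.\ql)$) equal to $\qd^{1/2}(\ql) \cdot \sum_{\qm \in W^v.\ql} e^\qm$ up to a nonzero scalar in $R$ — indeed the only $x$ with $d^v(0,x) = \ql$ retracting to an extremal vertex $w\ql$ is $w\ql$ itself, contributing $1$, so after the $\qd^{1/2}$-twist the coefficient of $e^{w\ql}$ is $\qd^{1/2}(w\ql)$ times a fixed power of the relevant $q_i$'s. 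Then a triangularity argument with respect to the partial order $\le_{Q^\vee}$ on $Y^{++}$ (finitely many $\qm_j$ below any given element, by \cite[Lemma 2.2]{He17} as used in Lemma \ref{2.10a}) shows $(\shs(c_\ql))_{\ql \in Y^{++}}$ is a "triangular basis" of $R[[Y]]^{W^v}$: this simultaneously gives injectivity and surjectivity.

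Concretely, to finish I would argue as follows. The family $(m_\ql)_{\ql \in Y^{++}}$, where $m_\ql = \sum_{\qm \in W^v.\ql} e^\qm$ is the orbit-sum, is an $R$-basis of $R[[Y]]^{W^v}$ in the sense that every element of $R[[Y]]^{W^v}$ is a (possibly infinite, almost-finitely-supported) $R$-linear combination of the $m_\ql$, uniquely. By step (i), $\shs(c_\ql) = \sum_{\qm \in Y^{++},\ \qm \le_{Q^\vee} \ql} b_{\ql,\qm} m_\qm$ with $b_{\ql,\ql} = \qd^{1/2}(\ql) u_\ql$ for a unit $u_\ql \in R$ (a product of the invertible parameters $q_i, q_i'$; note these are invertible in $R$ by hypothesis on $R$ containing $\Z[q^{\pm1}]$ — here one checks the precise scalar is a unit). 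Since $\le_{Q^\vee}$ on $Y^{++}$ has the finiteness property needed, this upper-triangular system with invertible diagonal can be inverted: one solves recursively for $c_\ql$ in terms of the $m_\qm$, and the almost-finite support condition on $\shh$ matches the one on $R[[Y]]^{W^v}$ under $\shs$ (both are governed by $\bigcup (\qm_j - Q_+^\vee)$, by Lemma \ref{2.10a}). Hence $\shs$ is a bijection, and being already a ring homomorphism it is an algebra isomorphism; commutativity of $R[[Y]]^{W^v}$ is clear, whence $\shh$ is commutative too. $\QED$
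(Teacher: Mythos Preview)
Your plan is correct in outline and is essentially the argument of \cite[5.4]{GR14}, which is what the present paper cites rather than reproving: one shows $\shs(\shh)\subset R[[Y]]^{W^v}$ and then inverts by triangularity with respect to $\leq_{Q^\vee}$ on $Y^{++}$, using that the diagonal coefficient $b_{\ql,\ql}=\qd^{1/2}(\ql)$ is a unit in $R$ (this is exactly the computation of $J_e(\ql)$ in \ref{2.13}.3) together with the finiteness result behind Lemma~\ref{2.10a}.

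Two points deserve correction or sharpening. First, your description of the coefficient of $e^\qm$ in $\shs_*(c_\ql)$ is reversed: by \ref{2.13}.1 one has $n_\ql(\qm)=\#\{x\in\SHI_0\mid d^v(x,\qm)=\ql,\ \qr_{-\infty}(x)=0\}$, i.e.\ one fixes the endpoint $\qm\in\A$ and varies the start $x$, not the other way round. In the Kac-Moody setting there is no longest element exchanging the two descriptions, so one must use the correct one; in particular $n_\ql(w\ql)$ is not $1$ for general $w$ but rather $\qd^{1/2}(\ql-w\ql)$, which is precisely what $W^v$-invariance of $\shs(c_\ql)$ asserts. This does not damage the triangularity step (only the single value $n_\ql(\ql)=1$ is needed there), but it does affect how you phrase the invariance argument.

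Second, on the $W^v$-invariance itself: the phrase ``independence of the count on the choice of negative sector-germ'' is not quite the mechanism, since the raw counts $n_\ql(\qm)$ genuinely depend on which $\g S_{-\infty}$ one uses. What one proves is that replacing $\g S_{-\infty}$ by $r_i\!\cdot\!\g S_{-\infty}$ changes the counts by explicit monomials in $q_i,q'_i$, and the $\qd^{1/2}$-twist absorbs these. In \cite{GR14} this comparison is carried out one simple reflection at a time via the extended tree $\SHI(M_\infty)$ attached to $\qa_i$ (the same tool used in Section~\ref{s4} of the present paper), reducing the question to a rank-one tree computation. Your ``change-of-retraction'' idea is the right conceptual frame; the extended tree is the concrete device that makes the bookkeeping of the $q_M$ tractable.
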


\begin{rema*} As stated here, following \cite[5.4]{GR14}, this theorem may be misleading, as it does not tell that any $f$ in the image of $\shs$ has its support in $Y^+$.
 Actually the last paragraph of the proof of \cite[5.4]{GR14} gives that any $f\in R[[Y]]^{W^v}$ has its support in $Y^+$, see also \cite[4.6]{AbH17}.
\end{rema*}

 \subsection{Explicitation and decomposition of $\shs(c_\ql)$}\label{2.13} 

\parni{\bf 1)} From \cite[5.2]{GR14} we get that, for $\ql\in Y^{++}$, \qquad$\shs(c_\ql)=\sum_{\qm≤_{ Q^\vee}\ql} \, \qd^{1/2}(\qm).n_\ql(\qm).e^\qm$

\parni Here $n_\ql(\qm)=\sum_\qp \, \#S_{-\infty}(\qp,\qm)$ is the sum, over all Hecke paths $\qp$ of type $\ql$ from $0$ to $\qm\in Y$, of the numbers $\#S_{-\infty}(\qp,\qm)$ and $\#S_{-\infty}(\qp,\qm)$ is  the number of line segments $[x,\qm]$ in $\SHI$ with $d^v(x,\qm)=\ql$ and $\qr_{-\infty}([x,\qm])=\qp$.

\par Actually the condition $\qm≤_{ Q^\vee}\ql$ is a consequence of the assumptions on the path $\qp$, so we may also write:
 \qquad\qquad $\shs(c_\ql)=\sum_\qp \, \qd^{1/2}(\qm). \#S_{-\infty}(\qp,\qm).e^\qm$.

\par As any Hecke path is increasing for $≤$ (\cf \ref{1.9}), any $\qm\in supp(\shs(c_\ql))$ is in $Y^+$.
So it is clear that the support of any $f\in \shs(\shh)$ is in $Y^+$ and satisfies the  condition of support in \ref{2.11}.2.

\medskip
\parni{\bf 2)} The expected Macdonald's formula is a (still more) explicit formula for $\shs(c_\ql)$.
To prove it we 
decompose $\shs(c_\ql)$ as a sum $\shs(c_\ql)=\sum_{w} \, J_w(\ql)$.
 We give here a 
 definition using paths:
 \medskip
 \par\qquad $J_w(\ql)=\sum_{\qm≤_{ Q^\vee}\ql}(\sum_\qp \, \qd^{1/2}(\qm). \#S_{-\infty}(\qp,\qm).e^\qm)=\sum_\qp \, \qd^{1/2}(\qm). \#S_{-\infty}(\qp,\qm).e^\qm$
\medskip 
 \parni where $\qp$ runs over all Hecke paths of type $\ql$ from $0$ to $\qm\in Y\subset \A$, satisfying moreover $\qp'_+(0)=w.\ql$.
 
 \par Actually $J_w(\ql)$ depends only on $w.\ql$.
 So the sum in $\shs(c_\ql)=\sum_{w} \, J_w(\ql)$ is over 
 a set $(W^v)^\ql$ of representatives of $W^v/W^v_\ql$.
{ Moreover this sum is clearly convergent as, for any $\qm$, $\#S_{-\infty}(\qp,\qm)$ and the sum $n_\ql(\qm)=\sum_{w\in W^v}(\sum_{\qp'_+(0)=w.\ql}\, \#S_{-\infty}(\qp,\qm))$ (of integers) are clearly finite.
}
 
 \par We shall prove in  \S \ref{s4}, a recursive formula for the $J_w(\ql)$ which will enable us to prove in \S \ref{s6} the expected Macdonald's formula.
 
\medskip
\parni{\bf 3)} We compute now $J_e(\ql)$.
By \ref{1.10}.b, a Hecke path $\qp$ of type $\ql$ in $\A$ with initial derived vector $e.\ql=\ql$ has to be the line segment $\qp(t)=t\ql$ from $0$ to $\ql=\qm$.
And the number $\# S_{-\infty}(\qp,\ql)$ of line segments $[x,\ql]$ in $\SHI$ with $d^v(x,\ql)=\ql$ and $\qr_{-\infty}([x,\ql])=\qp$ is $1$: any such line segment is equal to $\qp=[0,\ql]$ (see a proof in \cite{BPGR16} part e) of the proof of Lemma 2.1).
 Hence the above formula for $J_w(\ql)$ gives $J_e(\ql)=\qd^{1/2}(\ql).e^\ql$.

 \subsection{Comparison with the conventions of \cite{BrKP15}}\label{2.14} 

\par This article deals with split affine Kac-Moody groups as in \ref{1.13c}.
The prominent part there is taken by the groups $K$, $K_I=K_I^+$ and $U^+=\langle U_\qa \mid \qa\in\QF^+\rangle$.
Moreover the semigroup in \cite{BrKP15} is $G^\oplus=K.\varpi^{Y^+}.K{=K.\varpi^{Y^{++}}.K}$ where, for $\ql\in Y^+$, $\varpi^\ql$ is the element $\ql(\varpi)$ of the maximal torus $Z$ (actually $Y=Hom(\G_m,Z)$).
 By \ref{1.13c} the action of $\varpi^\ql$ on $V$ is the translation $t_{-\ql}$.
 So $G^\oplus$ is our $G^-=(G^+)^{-1}$.
 
 \par To translate the results in \lc, we have to exchange $+$ and $-$.
 We chose to work with $K$, $K_I^-$, $U^-=\langle U_\qa \mid \qa\in\QF^-\rangle$ and $G^+$.
 This corresponds moreover to the choices in \cite{GR14}.
 Contrary to \cite{BPGR16}, the convenient Iwahori-Hecke algebra is now $^I\shh^-$; but its role will be minor: only the Bernstein-Lusztig-Hecke algebra seems to appear.


\section{Recursion formula for $J_w(\ql)$ using paths}\label{s4}




In this section, using the masure and an extended tree inside it, we prove a recursion formula for the $J_w(\ql)$'s, introduced in Section \ref{2.13} (with $\ql\in Y^{++}$). 
From this subsection, recall also the notation $\#S_{-\infty}(\qp,\qm)$, then the $J_w(\ql)$'s are defined as
$$
J_w(\ql)=\sum_{\qm≤_{ Q^\vee}\ql}(\sum_\qp \, \qd^{1/2}(\qm). \#S_{-\infty}(\qp,\qm).e^\qm)=\sum_\qp \, \qd^{1/2}(\qm). \#S_{-\infty}(\qp,\qm).e^\qm
$$ where $\qp$ runs over all Hecke paths of type $\ql$ from $0$ to $\qm\in Y\subset \A$, satisfying moreover $\qp'(0)=w.\ql$. 

Consider, in the same way as Macdonald \cite{Ma03},  the functions $b$ and $c$: 
 \begin{equation}\label{5.5.2} b(t,u; z) ={{(t-t^{-1}) + (u-u^{-1})z}\over {1-z^2}}, \quad c(t, u; z)= t-b(t, u ; z)\end{equation} which satisfy the relations 
  \begin{align} 
&c(t, u; z)=c(t^{-1}, u^{-1}; z^{-1})\, ,\label{5.5.3}\\
&c(t, u; z)+c(t, u; z^{-1})=t+t^{-1}\, ,\label{5.5.3b}\\
&c(t, u; z)=t^{-1}+b(t,u;z^{-1})\, .\label{5.5.3c}
\end{align}
 
\parni In the case where $t=u$, then $b(t,t;z)={{t-t^{-1}}\over {1-z}}$ and $c(t,t;z)={{t^{-1}-tz}\over {1-z}}$.

{\par From now on the set $(W^v)^\ql$ of representatives of $W^v/W^v_\ql$ is chosen to be $(W^v)^\ql=\{ w\in W^v \mid \ell(wv)\geq \ell(w), \forall v\in W^v_\ql\}$.}

\begin{theo}\label{th3.1}

Let us fix $w\in (W^v)^\lambda$ and $i\in I$ such that $w = r_iw'$ with $\ell(w) = 1+\ell(w')$. 
Then 
$$
J_w(\ql)= q_i^{-1/2}c\big (q_i^{-1/2}, (q_i')^{-1/2}; e^{\alpha_i^\vee}\big ) J_{w'}(\lambda)^{r_i} + q_i^{-1/2}b\big (q_i^{-1/2}, (q_i')^{-1/2}; e^{\alpha_i^\vee}\big ) J_{w'}(\lambda)
$$ with $J_{w'}(\lambda)^{r_i}$ meaning that the reflection $r_i$ is applied only to the $\qm$'s in $e^\qm$.
\end{theo}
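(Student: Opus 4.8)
The plan is to prove the recursion by a geometric analysis of Hecke paths in the masure, isolating the behaviour of a path near its starting point $0$. Fix $w = r_iw'$ with $\ell(w)=1+\ell(w')$, so $w'\in (W^v)^\ql$ as well and $w'^{-1}\qa_i^\vee$ is a positive coroot. A Hecke path $\qp$ of type $\ql$ from $0$ to $\qm$ with $\qp'_+(0)=w.\ql$ either stays, for small $t$, in the direction $w.\ql$ all the way until its first ``bend'', or bends immediately; in all cases one wants to compare $\qp$ with the path $r_i.\qp$ obtained by folding the initial segment across the wall $M(\qa_i,0)$. The key point is that $w.\ql$ and $r_iw'.\ql = w'.\ql$ lie on opposite sides of (or on) the wall $\ker\qa_i$, since $\langle \qa_i, w.\ql\rangle = \langle w'^{-1}\qa_i, \ql\rangle \le 0$ because $w'^{-1}\qa_i$ is a negative root (as $\ell(r_iw')>\ell(w')$). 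So the folding operation sends Hecke paths of type $\ql$ with $\qp'_+(0)=w.\ql$ to Hecke paths with $\qp'_+(0)$ equal to either $w'.\ql$ or $w.\ql$, according to how the path first crosses the wall.

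First I would set up the combinatorial bijection between the relevant sets of line segments in $\SHI$. Following the path model of \S\ref{2.13}, $J_w(\ql)$ counts, weighted by $\qd^{1/2}(\qm)e^\qm$, the line segments $[x,\qm]$ with $d^v(x,\qm)=\ql$, $\qr_{-\infty}([x,\qm])=\qp$ and $\qp'_+(0)=w.\ql$. Using the extended tree inside $\SHI$ attached to the wall $M(\qa_i,0)$ (as in \cite{GR14}), I would retract these segments first onto the ``thin'' situation near the initial panel: the number of segments whose image folds a given way is governed by the thickness parameters $q_i$ (for the wall through $0$) and $q_i'$ (for the parallel wall at distance $1$), which is exactly where the functions $b(q_i^{-1/2},(q_i')^{-1/2};\cdot)$ and $c(q_i^{-1/2},(q_i')^{-1/2};\cdot)$ enter. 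Concretely, a path that crosses the wall $M(\qa_i, k)$ at its first bend contributes a geometric-series-in-$e^{\qa_i^\vee}$ factor, and summing over $k\ge 0$ (resp. $k\ge 1$) produces the denominators $1-e^{\qa_i^\vee}$ appearing in $b$ and $c$; the two cases ``the folded path has initial direction $w'.\ql$'' and ``it keeps direction $w.\ql$'' give the two terms $J_{w'}(\ql)^{r_i}$ and $J_{w'}(\ql)$ respectively, the superscript $r_i$ recording that folding the tail of the path reflects the endpoint $\qm$.

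The main obstacle, I expect, is the bookkeeping of the endpoint twist and of the $\qd^{1/2}$ factor. When one folds the initial segment of $\qp$ across $M(\qa_i,0)$, the tail of the path is reflected, so the endpoint $\qm$ becomes $r_i.\qm$ in the term coming from direction $w'.\ql$ — hence $J_{w'}(\ql)^{r_i}$ rather than $J_{w'}(\ql)$ — and one must check that $\qd^{1/2}(\qm)$ transforms correctly under this reflection, i.e. that the discrepancy $\qd^{1/2}(\qm)/\qd^{1/2}(r_i\qm)$ is a power of $q_i q_i'$ matching $\langle\qa_i,\qm\rangle$, which is absorbed into the $q_i^{-1/2}$ prefactors and the argument $e^{\qa_i^\vee}$ of $b$ and $c$. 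A careful induction on $\ell(w)$ (base case $w=e$ being $J_e(\ql)=\qd^{1/2}(\ql)e^\ql$ from \S\ref{2.13}), together with Lemma \ref{1.10}.b to guarantee all sums are finite and supported in $\ql - Q^\vee_+$, then closes the argument. The other delicate point is handling paths with a bend exactly at $t=0^+$ versus a later first bend; these are organised by conditioning on the smallest $t$ with $\qp'_-(t)\ne\qp'_+(t)$ or with $\qp$ meeting a wall $M(\qa_i,k)$, and the local structure of Hecke paths (the precise conditions of \cite{KM08}) guarantees the contributions assemble into exactly the claimed $b,c$-coefficients.
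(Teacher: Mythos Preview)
Your plan has the right ingredients in outline --- the wall $\ker\qa_i$, the extended tree of \cite{GR14}, and the appearance of $q_i,q'_i$ from wall thicknesses --- but the mechanism you describe has a genuine gap.

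The central difficulty is your ``folding'' of a Hecke path across $M(\qa_i,0)$ and the claimed bijection at the level of paths in $\A$. A Hecke path of type $\ql$ may cross the walls $M(\qa_i,k)$ many times, and the quantity $\#S_{-\infty}(\qp,\qm)$ you must track is \emph{not} determined by local data ``near the initial panel''. The paper makes this point explicitly (Remark after \S\ref{sse3.4}): a Littelmann-style $e_i,f_i$ orbit description of the relevant set of Hecke paths in $\A$ works only in simple cases (e.g.\ when the tree-paths are line segments) and fails in general. So conditioning on the ``first bend'' of $\qp$ in $\A$ and folding there does not give a clean recursion on $\#S_{-\infty}$. (Incidentally, your computation $\langle\qa_i,w\ql\rangle=\langle w'^{-1}\qa_i,\ql\rangle$ is off: it equals $-\langle w'^{-1}\qa_i,\ql\rangle$, and $w'^{-1}\qa_i$ is \emph{positive} since $\ell(r_iw')>\ell(w')$; the sign conclusion you want is nonetheless correct, and in fact $\qa_i(w'\ql)>0$ strictly.)

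What the paper does instead is lift everything to the extended tree $\SHT$ associated to $\qa_i$, factoring $\qr_{-\infty}=\qr_2\circ\qr_1$. The crucial structural facts are: (i) $\#S_1(\tilde\qp,\tilde\qp(1))$ is constant on each $G^w_\SHT$-orbit of $\SHT$-Hecke paths (because $\qr_1$ commutes with the $G^w_\SHT$-action), and (ii) the $\qd^{1/2}$-factor splits as a part constant on the orbit times a part depending only on the $\qa_i^\vee$-coordinate. This lets one replace the identity by a purely combinatorial statement about ``simplified paths'' in the tree: sequences of stripes $\mathbf s=(\g s_0,\g s_1,\ldots,\g s_n,\qm)$, parametrized by a single integer $k=k(\mathbf s)$ recording where the gallery first enters the half-apartment toward $-\infty_\SHT$. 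The proof is then an explicit finite computation (three tables: $n=0$; $n>0$ with $\g s_0\neq\g s_1$; $n>0$ with $\g s_0=\g s_1$) verifying that the weighted sum over $k$ vanishes. There is no induction on $\ell(w)$ in this argument --- the recursion is proved directly for each pair $(w,w')$; induction on $\ell(w)$ only appears later, in \S\ref{s6}, to identify $J_w(\ql)$ with a specialization of $\qs_wH_w(e^\ql)$.
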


The whole section is devoted to the proof of this theorem. For any Hecke path $\pi$ of type $\ql$ from $0$ to $\qm$,  with $\qp'(0)=w.\ql$ or $\qp'(0)=w'.\ql$, let us first set 
$$
\Gamma_i(\pi) =
\left\{
\begin{array}{ll}
c_q(\alpha_i^\vee)e^{r_i\pi(1)} + b_q(\alpha_i^\vee)e^{\pi(1)} & \hbox{ if } \qp'_+(0)=w'.\ql\\
- e^{\pi(1)} & \hbox{ if } \qp'_+(0)=w.\ql
\end{array}
\right.
$$ 
where $c_q(\alpha_i^\vee)=q_i^{-1/2}c\big (q_i^{-1/2}, (q_i')^{-1/2}; e^{\alpha_i^\vee}\big )$ and $b_q(\alpha_i^\vee)=q_i^{-1/2}b\big (q_i^{-1/2}, (q_i')^{-1/2}; e^{\alpha_i^\vee}\big ).$

Therefore, the formula we want to show reads now:
\begin{equation}\label{eq3.1}
0 = \sum_\qp \, \qd^{1/2}(\pi(1)). \#S_{-\infty}(\qp,\pi(1)).\Gamma_i(\pi)
\end{equation} where the sum runs over all Hecke paths of type $\ql$ starting from $0$ such that $\qp'_+(0)=w.\ql$ or $\qp'_+(0)=w'.\ql$.

\par 
Let  $w\in (W^v)^\lambda$ and $i\in I$ be such that $w = r_iw'$ with $\ell(w) = 1+\ell(w')$ as above.
Then $\qa_i(w'C^v_f)>0$, hence $\alpha_i(w'\lambda)\geq0$.
But $\alpha_i(w'\lambda)=0\implies r_iw'\ql=w'\ql$
, a contradiction as $w\in (W^v)^\lambda$ and $\ell(w) = 1+\ell(w')$.
So $\alpha_i(w'\lambda)>0$.
Moreover $w'\in (W^v)^\lambda$: otherwise there exists $v\in W^v_\ql$ with $\ell(w'v)<\ell(w')$ and $\ell(wv)=\ell(r_iw'v) \leq 1+\ell(w'v) \leq \ell(w') < \ell(w)$, a contradiction.

\subsection{Through the extended tree}\label{sse3.1}

The masure $\SHI$ contains the extended tree $\SHT$ associated to $(\mathbb A, \alpha_i)$ that was defined in \cite{GR14} under the name $\SHI(M_\infty)$. 
Its apartment associated to $\A$ is $\A$ as affine space, but with only walls the walls directed by $\ker \qa_i$.

There, it is also proven that the retraction $\rho_{-\infty}$ factorizes through $\SHT$ and equals the composition $\rho_{-\infty}:\SHI\stackrel{\rho_1}{\to}\SHT\stackrel{\rho_2}{\to}\mathbb A$, where $\rho_1$ is the parabolic retraction defined in 5.6 of \cite{GR14} and $\rho_2$ is the retraction with center the end $-\infty_\SHT$ (denoted $\g S'$ in loc. cit.), \ie the class of half-apartments in $\SHT$ containing $\g S_{-\infty}$.

Recall that there is a group $G$ acting strongly transitively on $\SHI$, let $G_\SHT$ be the stabilizer of $\SHT$ under this action. Because of the assumptions on $G$ and the action, one has, for any $x\in\SHI$ and any $g\in G_\SHT$, $\rho_1(g\cdot x) = g\cdot \rho_1(x)$; the action of $G_\SHT$ commutes with the retraction $\rho_1$.

\par We shall actually consider a subgroup of $G_\SHT$, namely $G_\SHT^w$ is the subgroup of the $g\in G_\SHT$ that induce on $\SHT$ Weyl automorphisms.

\begin{lemm}\label{3.2b} a) The fixer in $G_\SHT^w$ of an half-apartment $D\subset\SHT$ acts transitively on the apartments of $\SHT$ containing $D$.

\par b) The stabilizer $G_{\SHT,A}^w$ in $G_\SHT^w$ of an apartment $A$ in $\SHT$, induces on $A$ a group $W_G(A)$ containing the reflections with respect to the walls of $A$.
Moreover $W_G(A)$ is the infinite diedral group $W_\SHT(A)$ generated by these reflections.
For $x\in A$, $(G_\SHT^w.x)\cap A=W_G(A).x$.

In particular for $A=\A$, the group $W_G(\A)$ is equal to $\Z\qa_i^\vee\rtimes\{e,r_i\}$.
\end{lemm}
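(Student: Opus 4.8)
The plan is to use the strong transitivity of $G$ together with the structure of the extended tree $\SHT$, which here is essentially a Bruhat–Tits tree whose apartments are copies of $\A$ with only the walls directed by $\ker\qa_i$. For part a), I would argue as follows. Let $D\subset\SHT$ be a half-apartment, $M=\partial D$ its wall (a panel in the tree language), and let $A, A'$ be two apartments of $\SHT$ containing $D$. Since $\SHT$ is an ordered affine hovel (of the lower rank associated to $\alpha_i$), the Lemma in \ref{1.3}.3 — applied inside $\SHT$ to the half-apartment $D$ and to a local chamber $C$ covering the panel $M\cap\SHT$ contained in $A'$ — together with the axiom \ref{1.3}.1.a gives a Weyl-isomorphism $A\to A'$ fixing $cl_A(D)=D$; by strong transitivity (\ref{1.3}.4) this isomorphism is induced by an element $g\in G$. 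The point requiring care is that $g$ must be taken in $G_\SHT^w$: one checks that $g$ can be chosen to stabilize $\SHT$ (it carries an apartment of $\SHT$ to an apartment of $\SHT$ while fixing $D$, and $\SHT$ is the union of the apartments through $D$ inside the relevant sub-hovel), and that it induces a vectorially-Weyl automorphism of $\SHT$ because it fixes the half-apartment $D$ pointwise, hence fixes $\vect D=\ker\qa_i$ and acts by $\pm\mathrm{id}$ on the transverse line, which is a Weyl automorphism of the tree. This gives the transitivity on $\{A'\supset D\}$ claimed in a).

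For part b), fix an apartment $A$ in $\SHT$ and let $W_G(A)$ be the image of the stabilizer $G_{\SHT,A}^w$ acting on $A$. First, $W_G(A)\subseteq W_\SHT(A)$: any $g\in G_{\SHT,A}^w$ induces a Weyl automorphism of $\SHT$ stabilizing $A$, hence a vectorially-Weyl affine automorphism of $A$ permuting its walls, and the group of such is exactly $W_\SHT(A)$, the infinite dihedral group generated by the reflections in the (equally spaced, codimension-one-in-the-tree-sense) walls of $A$. Conversely, each reflection $r_M$ with respect to a wall $M$ of $A$ lies in $W_G(A)$: fix one of the two half-apartments $D$ of $A$ bounded by $M$; by part a) the fixer of $D$ in $G_\SHT^w$ acts transitively on the apartments of $\SHT$ containing $D$, and since (by thickness of the tree) there is at least one apartment $A''\ne A$ through $D$, one can produce an element of $G_\SHT^w$ fixing $D$ and sending $A$ to $A$ while acting non-trivially — more precisely, composing two such elements (one sending $A$ to $A''$, one sending $A''$ back to $A$ ``on the other side'') yields $g\in G_{\SHT,A}^w$ inducing $r_M$ on $A$. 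Hence $W_\SHT(A)\subseteq W_G(A)$, giving equality. The orbit statement $(G_\SHT^w\cdot x)\cap A=W_G(A)\cdot x$ then follows from the standard retraction argument: if $g\cdot x\in A$ for $g\in G_\SHT^w$, choose an apartment $A'$ of $\SHT$ containing $x$ and a half-apartment $D$ of $A'\cap A$ (convexity of intersections of apartments in the hovel $\SHT$ guarantees $A\cap A'$ contains such a $D$ when both contain the comparable pair $x, g\cdot x$); use a) to modify $g$ by an element fixing that half-apartment until $g$ stabilizes $A$, reducing to the case $g\in G_{\SHT,A}^w$, where $g\cdot x\in W_G(A)\cdot x$ by definition.

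Finally, for the last assertion with $A=\A$: the walls of $\A$ inside $\SHT$ are exactly the hyperplanes $\ker\qa_i + k\,\tfrac{1}{2}\qa_i^\vee$ for... more precisely, since the standard apartment of $\SHT$ has walls directed by $\ker\qa_i$ and the reflections generate translations by the coroot lattice projected onto the $\qa_i$-line, the translation subgroup of $W_\SHT(\A)$ is $\Z\qa_i^\vee$ and the point group is $\{e,r_i\}$, so $W_G(\A)=W_\SHT(\A)=\Z\qa_i^\vee\rtimes\{e,r_i\}$; this is just the explicit description of the infinite dihedral group in this affine line, using that $\alpha_i(\alpha_i^\vee)=2$ so that $r_i$ and the translation $t_{\alpha_i^\vee}$ generate it.

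The main obstacle I expect is the bookkeeping in part b) to produce a genuine reflection (and not merely a non-trivial automorphism) in $W_G(A)$: one must combine the transitivity from a) with thickness of the tree carefully, matching up two apartments through the fixed half-apartment $D$ so that their composite stabilizes $A$ and realizes exactly $r_M$ — and one must keep everything inside $G_\SHT^w$ rather than the larger $G_\SHT$ or $G$, checking at each step that the relevant element induces a Weyl (vectorially-Weyl) automorphism of $\SHT$. The rest is a routine transcription of the standard building-theoretic facts (Lemma \ref{1.3}.3, axiom \ref{1.3}.1.a, convexity of apartment intersections, strong transitivity) to the rank-one sub-hovel $\SHT$.
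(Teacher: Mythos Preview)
Your overall strategy is correct, but for part a) you take a more roundabout route than the paper, and the detour contains a soft spot.

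The paper's proof of a) does not work inside $\SHT$ at all. It simply observes that if $D$ is a half-apartment of $\SHT$, then $D$ contains a wall of direction $\ker\qa_i$, so the fixer $G_D$ of $D$ in the \emph{full} group $G$ fixes the wall direction $M_\infty$ and therefore stabilizes $\SHT$; moreover $G_D$ fixes a chamber of $\SHT$, hence acts by Weyl automorphisms, so $G_D\subset G_\SHT^w$ automatically (citing \cite[5.5]{GR14}). Transitivity on apartments containing $D$ then comes directly from \cite[2.2.5]{R11}. This one-line inclusion $G_D\subset G_\SHT^w$ is the key point you are missing.

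By contrast, you first produce an isomorphism inside $\SHT$, lift it to some $g\in G$ via strong transitivity of $G$ on $\SHI$, and then try to argue that $g$ can be taken in $G_\SHT^w$. Your justification --- ``$\SHT$ is the union of the apartments through $D$ inside the relevant sub-hovel'' --- does not close the gap: $g$ a priori permutes apartments of $\SHI$, not of $\SHT$, and sending one $\SHT$-apartment to another while fixing $D$ is not by itself enough to force $g\cdot\SHT=\SHT$. The paper's observation that fixing $D$ already forces fixing $M_\infty$ (hence stabilizing $\SHT$) is exactly what makes this step immediate.

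For part b), your outline matches what the paper calls ``one deduces classically from a)'': reflections lie in $W_G(A)$ by the usual two-apartment trick from a), the inclusion $W_G(A)\subset W_\SHT(A)$ holds because the elements are Weyl automorphisms, and the orbit statement follows. Your treatment here is fine, just more detailed than the paper's.
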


\begin{rema*} The extended tree $\SHT$ may be identified $G_\SHT^w-$equivariently with the product of a genuine (discrete) tree $\SHT^e\subset\SHT$ and the vector space $\ker \qa_i$: if $x$ is a point in a wall of an apartment of $\SHT$ (\eg $x=0\in \ker\qa_i\subset \A$), one may choose $\SHT^e$ to be the convex hull of the orbit $G_\SHT^w.x$.

\end{rema*}

\begin{proof} a) The fixer $G_D$ of $D$ in $G$ fixes {a wall of direction $\ker \qa_i$, hence the corresponding wall direction $M_\infty$. }
So $G_D$ stabilizes $\SHT$ and fixes a chamber in $D\subset\SHT$: one has $G_D \subset G_\SHT^w$ \cite [5.5]{GR14}.
By \cite[2.2.5]{R11}, we know that $G_D$ acts transitively on the apartments of $\SHI$ containing $D$.

\par b) From a) one deduces classically that $W_G(A)$ contains the reflections with respect to the walls of $A$ and that $(G_\SHT^w.x)\cap A=W_G(A).x$.
As the elements in $W_G(A)$ are Weyl-automorphisms, we have $W_G(A)=W_\SHT(A)$.
\end{proof}

\begin{defi}\label{3.3}
A path $\tilde\pi : [0,1]\to \SHT$ that is the image by  $\rho_1$ of a segment $[x,y]$ in $\SHI$ with $(x,y)\in \SHI\times_\leq\SHI$ and $d^v(x,y) = \lambda$ is called a $\SHT$-Hecke path of shape $\lambda$. 
The images by $\qr_2$ of the $\SHT$-Hecke paths of shape $\lambda$ are exactly the Hecke paths of shape $\lambda$ in $\A$ (\cf \ref{1.10} and \cite[Th. 6.3]{GR08}).

Given a $\SHT$-Hecke path $\tilde\pi$ of shape $\lambda$ such that $\tilde\pi(1)\in\mathbb A$, we let $S_{1}(\tilde\pi,\tilde\pi(1))$ be the set of segments $[x,\tilde\pi(1)]$ that retract onto $\tilde\pi$ by $\rho_1$.
\end{defi}

Now if $\pi$ is a Hecke path of shape $\lambda$ in $\mathbb A$, then
$$
\#S_{-\infty}(\qp,\pi(1)) = \sum_{\tilde\pi} \#S_{1}(\tilde\pi,\tilde\pi(1)),
$$ 
where the sum runs over all $\SHT$-Hecke paths $\tilde\pi$ such that $\rho_1\tilde\pi = \pi$ and $\tilde\pi(1) = \pi(1)$. 

\par In particular $S_{1}(\tilde\pi,\tilde\pi(1))$ is finite for any $\SHT$-Hecke path $\tilde\pi$. Formula (\ref{eq3.1}) becomes
\begin{equation}\label{eq3.3}
0 = \sum_{\tilde\pi} \, \qd^{1/2}(\rho_2\tilde\pi(1)). \#S_{1}(\tilde\qp,\tilde\pi(1)).\Gamma_i(\rho_2\tilde\pi),
\end{equation} where the sum runs over all the $\SHT$-Hecke paths $\tilde\pi$ such that $\rho_2\tilde\pi(0) = 0$, $(\rho_2\tilde\qp)'(0)=w.\ql$ or $(\rho_2\tilde\qp)'(0)=w'.\ql$ and $\tilde\pi(1) \in\mathbb A$.

\subsection{Orbits of $\SHT$-Hecke paths}\label{sse3.4}

The set of $\SHT$-Hecke paths is acted upon by $G_\SHT^w$ and decomposes as a disjoint union of orbits. Now, for any fixed orbit $\mathcal O$, we consider the formula
\begin{equation}\label{eq3.4}
0 = \sum
 \, \qd^{1/2}(\rho_2\tilde\pi(1)). \#S_{1}(\tilde\qp,\tilde\pi(1)).\Gamma_i(\rho_2\tilde\pi),
\end{equation} where the sum runs over all the $\SHT$-Hecke paths $\tilde\pi$ in $\mathcal O$ such that $\rho_2\tilde\pi(0) = 0$, $(\rho_2\tilde\qp)'(0)=w.\ql$ or $(\rho_2\tilde\qp)'(0)=w'.\ql$ and $\tilde\pi(1)=\qr_2\tilde\pi(1) \in\mathbb A$. 

This set of $\SHT$-Hecke paths is finite by the finiteness of $n_\ql(\qm)$ (\ref{2.13}) and the fact that $\qm=\tilde\qp(1)$ has to satisfy $\ql\geq \qm \geq w\ql$ or $\geq w'\ql$ (\ref{1.10}.b).

Of course, if, for any orbit $\mathcal O$, equation (\ref{eq3.4}) holds true, then so does equation (\ref{eq3.3}). 

If the condition $(\rho_2\tilde\qp)'(0)=w.\ql$ or $(\rho_2\tilde\qp)'(0)=w'.\ql$ is true for a path $\tilde\pi$ in an orbit $\mathcal O$, then it is true for any path $\tilde\eta = g\cdot \tilde\pi$ in the orbit of $\tilde\pi$, for $G_\SHT^w$ stabilizes $\SHT$, which is defined by $\alpha_i$, and $w = r_iw'$. So we want to prove Formula (\ref{eq3.4}) for all orbits $\mathcal O$ such that the set of $\SHT$-Hecke paths in $\mathcal O$ satisfying the conditions $\rho_2\tilde\pi(0) = 0$, $(\rho_2\tilde\qp)'(0)=w.\ql$ or $(\rho_2\tilde\qp)'(0)=w'.\ql$ and $\tilde\pi(1) \in\mathbb A$ is not empty. And in this case, the condition $(\rho_2\tilde\qp)'(0)=w.\ql$ or $(\rho_2\tilde\qp)'(0)=w'.\ql$ can be removed.

Further, some terms in the RHS of Equation (\ref{eq3.4}) can be factorized. First, since $\rho_1$ and the action of $G_\SHT^w$ commute, $\#S_{1}(\tilde\qp,\tilde\pi(1))$ is constant on an orbit. 
Second, for a given path $\tilde\pi\in\sho$, $\qp=\qr_2\tilde\qp$ is a Hecke-path in $\A$ of shape $\lambda$.
As we require that $\rho_2\tilde\pi(0) = 0$ and $\tilde\pi(1)=\qr_2\tilde\pi(1) \in\mathbb A$, we have $\qr_2\tilde\pi(1)=\tilde\pi(1)\leqslant_{Q^\vee} \lambda$, so $\tilde\pi(1) \in \lambda + \sum_j  \Z\alpha_j ^\vee$.

But now, the intersection of $\mathbb A$ and an orbit of $G^w_\SHT$ through $\tilde\pi$ is an orbit of the Weyl group of $\SHT$
(Lemma \ref{3.2b}.b).
 Since $\tilde\pi(1)\in Y$, $\big (G^w_\SHT\cdot \tilde\pi(1)\big )\cap \mathbb A = \mu_0 + \mathbb Z\alpha_i^\vee$, for some $\mu_0\in Y$. 
We choose such a $\qm_0$ ($\in \lambda + \sum_j  \Z\alpha_j ^\vee$), so $\tilde\qp(1)=\qm_0-a_i\qa_i^\vee$ for some $a_i\in\Z$.

Now, set $\qd^{1/2}_i(\rho_2\tilde\pi(1)) = (q_iq'_i)^{-a_i/2}$ and $\qd^{1/2}_*(\rho_2\tilde\pi(1)) = \qd^{1/2}(\qm_0)$.
Then 
$$
\qd^{1/2}(\rho_2\tilde\pi(1)) = \qd^{1/2}_*(\rho_2\tilde\pi(1))\qd^{1/2}_i(\rho_2\tilde\pi(1)), 
$$ and $\qd^{1/2}_*(\rho_2\tilde\pi(1))$ is constant on the $\SHT$-Hecke paths in an orbit fulfilling the conditions.
 At the end of the day, Equation (\ref{eq3.4}) is equivalent to 
\begin{equation}\label{eq3.5}
0 = \sum_{\tilde\pi\in\mathscr C} \, \qd_i^{1/2}(\rho_2\tilde\pi(1)).\Gamma_i(\rho_2\tilde\pi),
\end{equation} 
where $\mathscr C$ is the finite set of all $\SHT$-Hecke paths $\tilde\qp$ in $\mathcal O$ such that $\tilde\pi(1) \in Y\subset \mathbb A$ and $\rho_2\tilde\pi(0) = 0$.

\begin{rema*} In some simple cases, \eg when the $\tilde\qp\in \sho$ are line segments, the set $\qr_2\SHC=\{\qp=\qr_2\tilde\qp \mid \tilde\qp\in\SHC\}$ has a simple description:
it is the ``orbit'' of some $\qp\in\qr_2\SHC$ under the action of the operators $e_i,f_i$ of Littelmann \cite{Li94}, see also \cite[5.3.2]{GR08}.
So one may directly argue then with Formula \ref{eq3.1}, just considering the $\qp$ in such a ``Littelmann orbit''.
But this does not work in general.
One may also notice that, for two different orbits $\sho_1,\sho_2$ as above, one may sometimes find $\tilde\qp_1,\tilde\qp_2$ in the corresponding sets $\SHC_1,\SHC_2$ with $\qr_2\tilde\qp_1=\qr_2\tilde\qp_2$.
\end{rema*} 

\subsection{Simplify the path}\label{sse3.5}


To any $\SHT$-Hecke path $\tilde\pi$, we associate first $\g s_0(\tilde\pi)$ the chamber of $\SHT$ containing $\tilde\pi([0,1))$, the germ in $0$ of $\tilde\pi$ 
(we saw in \ref{th3.1} that $\qa_i(\qr_2\tilde\qp'(0))\not=0$).
Second, let $(\g s_1(\tilde\pi), \ldots, \g s_n(\tilde\pi))$ be the minimal gallery from $\tilde\pi(0)$ to $\mu = \tilde\pi(1)$, by this we mean the minimal gallery between the chamber containing $\tilde\pi(0)$ closest to $\mu$ and the chamber containing $\mu$, closest to $\tilde\pi(0)$. Recall that the chambers of the extended tree are stripes isomorphic to $[0,1]\times \mathbb R^{\dim V - 1}$ separated by walls of direction $\ker \alpha_i$. Let $M_0(\tilde\pi)$ be the wall containing $\tilde\pi(0)$, $M_{-1}(\tilde\pi)$ be the other wall of $\g s_0(\tilde\pi)$, and recursively, for $j\geqslant 1$, let $M_j(\tilde\pi)$ be the wall of $\g s_j(\tilde\pi)$ distinct from $M_{j-1}(\tilde\pi)$. 

The simplification of $\tilde\pi$ is then the sequence $\mathbf s =\qs(\tilde\pi) = (\g s_0(\tilde\pi), \g s_1(\tilde\pi), \ldots, \g s_n(\tilde\pi),\tilde\qp(1)=\qm)$ of $n+1$ stripes and a point.
Up to conjugation by $G^w_\SHT$, this simplification only depends on $n,\qm$ and if $n\geqslant 1$, on whether or not $\g s_0(\tilde\pi) = \g s_1(\tilde\pi)$.

Let us note that in Formula (\ref{eq3.5}), the summands only depend on $\qs(\tilde\pi)$.

Now let $\tilde\pi$ and $\tilde\eta$ two $\SHT$-Hecke paths in $\mathscr C$ such that $\qs(\tilde\pi) = \qs(\tilde\eta)$. Then $\tilde\pi\in G^w_{\SHT, \g s_0(\tilde\eta), \g s_n(\tilde\eta)}\cdot\tilde\eta$, where $G^w_{\SHT, \g s_0(\tilde\eta), \g s_n(\tilde\eta)}$ is the fixer in $G^w_\SHT$ of $\SHT, \g s_0(\tilde\eta)$ and $\g s_n(\tilde\eta)$ and 
$$
\# \{\tilde\pi\in\mathscr C\mid \qs(\tilde\pi) = \qs(\tilde\eta)\} = \# \big (G^w_{\SHT, \g s_0(\tilde\eta), \g s_n(\tilde\eta)}/G^w_{\SHT,\tilde\eta}\big ).
$$ 
Furthermore, if we let $\tilde\pi = \gamma\cdot\tilde\eta$ {with $\qg\in G^w_\SHT$}, then the groups $G^w_{\SHT,\tilde\pi}$ and $G^w_{\SHT, \g s_0(\tilde\pi), \g s_n(\tilde\pi)}$ are the conjugates by $\gamma$, respectively, of $G^w_{\SHT,\tilde\eta}$ and $G^w_{\SHT, \g s_0(\tilde\eta), \g s_n(\tilde\eta)}$. 
So, the cardinality of the set $\{\tilde\pi\in\mathscr C\mid \qs(\tilde\pi) = \qs(\tilde\eta)\} $ is the same for all $\tilde\eta$ in $\mathscr C$ and we can factorize it in Formula (\ref{eq3.5}). The latter is therefore equivalent to the equality

\begin{equation}\label{eq3.6a}
0 = \sum_{\mathbf s\in\qs(\SHC)} \, \qd_i^{1/2}(\mu).\Gamma_i(\rho_2\mathbf s),
\end{equation} 
where $\mu$ is the last element of $\mathbf{s}$ and 
$$
\Gamma_i(\rho_2\mathbf s) = \Gamma_i(\qm,\rho_2\g s_0) =
\left\{
\begin{array}{ll}
c_q(\alpha_i^\vee)e^{r_i\mu} + b_q(\alpha_i^\vee)e^{\mu} & \hbox{ if } \rho_2\g s_0\in \rho_2(M_0)^+\\
- e^{\mu} & \hbox{ if } \rho_2\g s_0\in \rho_2(M_0)^-.
\end{array}
\right.
$$ 
where $\qr_2(M_0)^-$ (\resp $\qr_2(M_0)^+$) is the half-apartment of $\A$ limited by $\qr_2(M_0)$ and containing (\resp non containing) $-\infty_\SHT$.

Note that if $\mathbf s = \qs(\tilde\pi)$, then $\pi'_+(0) = w'.\lambda \Leftrightarrow \alpha_i(\pi'(0))>0 \Leftrightarrow \rho_2\g s_0\in \rho_2(M_0)^+$.

\bigskip
\parni{\bf Abstraction:} A simplified path in $\SHT$ is a sequence $\mathbf s = (\g s_0, \g s_1, \ldots, \g s_n,\qm)$ of $n+1$ stripes and an element $\qm\in Y$, such that, for $n\geq1$, $\g s_0$ and $\g s_1$ are stripes sharing a same wall $M_0$ of $\SHT$, $(\g s_1, \ldots, \g s_n)$ is a minimal gallery of stripes {(from $M_0$ to $\g s_n$)}  and the wall $M_n$ of $\g s_n$ not shared with $\g s_{n-1}$ contains the element $\mu$.
Up to conjugation by $G^w_\SHT$, such  a simplified path $\mathbf s$ depends only on $n,\qm$ and on whether or not $\g s_0 = \g s_1$. 

Denote by $\mathscr C_s$ the set of all simplified paths in an orbit of $G^w_\SHT$ such that moreover, $\qr_2(M_0)=\ker \qa_i$ and $\qm\in\A$.
By Lemma \ref{3.2b}.b $\qm$ is in $\mu_0 + \mathbb Z\alpha_i^\vee$, for some fixed $\qm_0\in Y\subset\A$.
Clearly an example of such a $\mathscr C_s$ is ${\qs(\SHC)}$.
So the expected Formula \ref {eq3.6a} is a consequence of the following Formula:

\begin{equation}\label{eq3.6}
0 = \sum_{\mathbf s\in\mathscr C_{s}} \, \qd_i^{1/2}(\mu).\Gamma_i(\rho_2\mathbf s),
\end{equation} 

Note also that this requirement is a bit stronger since we do not ask that $\mathscr C_s$ may be written  ${\qs(\SHC)}$.


\subsection{Last computations}\label{sse3.6}

\par{\bf\quad\ 1)} Let $\mathbf s = (\g s_0, \g s_1, \ldots, \g s_n,\qm)$ be a sequence in $\mathscr C_{s}$. Denote the sequence of walls that bound the stripes by $M_j$, for $-1\leqslant j\leqslant n$, where $M_{j}$ and $M_{j+1}$ are the walls of $\g s_{j+1}$. 
Since $M_n$ is in $\mathbb A$, let $k(\mathbf s)$ in $\{-1,0,\ldots, n\}$ be the smallest integer $k$ such that $M_k$ belongs to the half-apartment $D_{-\infty}$ in $\A$ bounded by $M_n$ and containing $-\infty_\SHT$. 
When $\g s_0=\g s_1$ and $M_0\subset D_{-\infty}$, then $M_{-1}=M_1\subset D_{-\infty}$, but we set $k(\mathbf s)=0$.
This integer $k(\mathbf s)$ determines completely $\rho_2 (\mathbf s)$ and allows to split the set $\mathscr C_s$ into disjoints subsets $\{\mathbf s\in\mathscr C_s\mid k(\mathbf s) = k\}$. So Formula (\ref{eq3.6}) is equivalent to
\begin{equation}\label{eq3.7}
0 = \sum_{k=-1 (0)}^{n} \#\{\mathbf s\in\mathscr C_{s}\mid k(\mathbf s) = k\} \, \qd_i^{1/2}(\mu).\Gamma_i(\qm,\rho_2\g s_0),
\end{equation} 
where $\mu$ is the same for all $\mathbf s$ in $\{\mathbf s\in\mathscr C_s\mid k(\mathbf s) = k\}$, and where $k= -1(0)$ means that the sum starts at $-1$ if $\g s_0\ne\g s_1$ and at $0$, otherwise.

\par We know already that, for each $\mathbf s\in\SHC_s$, $\mu \in \mu_0 + \mathbb Z\alpha_i^\vee$, for some fixed $\qm_0\in Y\subset\A$.
But clearly there is some $\mathbf s^0\in\SHC_s$, such that the corresponding $ \g s_1^0, \ldots, \g s_n^0$ are in $\A$, more precisely $k(\mathbf s^0)\leq0$.
For such a $\mathbf s^0$, $\qa_i(\qm^0)=n$; this is this $\qm^0$ that we choose to be $\qm_0$.

If $k(\mathbf s) =-1$, then the whole sequence $\mathbf s = \rho_2(\mathbf s)$ is in $\mathbb A$ and $\alpha_i(\mu) = n$. 
So $\qm=\qm_0$ and $\qd_i^{1/2}(\mu)=1$.
If $k\ne -1$, then $\alpha_i(\mu) = n - 2k$ and $\mu = \mu_0 - k\alpha_i^\vee$. So, in this case, $\qd_i^{1/2}(\mu) = \qd_i^{1/2}(\mu_0 - k\alpha_i^\vee) =  (\sqrt{q_iq'_i})^{-k}$. 

\medskip
\par {\bf 2)} Let us now present the computations of the components of the sum above. First in the case $n=0$.
$$
\begin{array}{|c|c|c|c|}
k & \#\{\mathbf s\mid k(\mathbf s) = k\} &\qd_i^{1/2}(\mu) & \Gamma_i(\qm,\rho_2\g s_0)/e^{\mu_0}\\
\hline
-1 & 1 & 1 & -1 \\
\hline
0 & q_i & 1 & 1/q_i\\
\hline
\end{array}
$$ So Formula (\ref{eq3.7}) holds true in this case! Second, the case $n>0$ and $\g s_0\ne \g s_1$ reads as follows, taking into account that $r_i(\mu) = \mu_0 - (n-k)\alpha_i^\vee$.
$$
\begin{array}{|c|c|c|c|}
k & \#\{\mathbf s\mid k(\mathbf s) = k\} & \qd_i^{1/2}(\mu) & \Gamma_i(\qm,\rho_2\g s_0)/e^{\mu_0}\\
\hline
-1 & 1 & 1 & -1 \\
\hline
0 & q_i-1 & 1 & c_q(\alpha_i^\vee)e^{-n\alpha_i^\vee} + b_q(\alpha_i^\vee) \\
\hline
0<k<n & (q_i '^{*k} - q_i'^{*(k-1)})q_i & (\sqrt{q_iq'_i})^{-k} & c_q(\alpha_i^\vee)e^{-(n-k)\alpha_i^\vee} + b_q(\alpha_i^\vee)e^{-k\alpha_i^\vee}\\
\hline
n & q_iq_i'^{*n} & (\sqrt{q_iq'_i})^{-n} & c_q(\alpha_i^\vee) + b_q(\alpha_i^\vee)e^{-n\alpha_i^\vee}\\
\hline
\end{array}
$$ Third, the table in the case $n>0$ and $\g s_0 = \g s_1$ looks as follows.
$$
\begin{array}{|c|c|c|c|}
k & \#\{\mathbf s\mid k(\mathbf s) = k\} & \qd_i^{1/2}(\mu) & \Gamma_i(\qm,\rho_2\g s_0)/e^{\mu_0}\\
\hline
0 & 1 & 1 & c_q(\alpha_i^\vee)e^{-n\alpha_i^\vee} + b_q(\alpha_i^\vee) \\
\hline
0<k<n  & q_i'^{*k} - q_i '^{*(k-1)} & (\sqrt{q_iq'_i})^{-k} & -e^{-k\alpha_i^\vee} \\
\hline
n & q_i '^{*n} & (\sqrt{q_iq'_i})^{-n} &  -e^{-n\alpha_i^\vee}  \\
\hline
\end{array}
$$ 

To check that all these components sum up to zero, let us remark that if $n=\alpha_i(\mu_0)$ is odd, then $q_i = q'_i$. So we are reduced to two cases: first, $n$ even, then $q_i = q'_i$. We give a detailed account of the first case, leaving the second one to the reader. Denote the right hand side of Formula (\ref{eq3.7}) by $RHS$.

\medskip
\par {\bf 3)} The case $n>0$,  $\g s_0 = \g s_1$ (and $n=2\ell$).
$$
\begin{array}{rcl}

\frac{RHS}{e^{\mu_0}} & = &  \frac{1}{e^{\mu_0}}   \sum_{k=0}^{n} \#\{\mathbf s\mid k(\mathbf s) = k\} \, \qd_i^{1/2}(\mu).\Gamma_i(\qm,\rho_2\g s_0)\\ 

 & = & \sum_{k=1}^{n} \#\{\mathbf s\mid k(\mathbf s) = k\} (\sqrt{q_iq'_i})^{-k} (-e^{-k\alpha_i^\vee}) + c_q(\alpha_i^\vee)e^{-n\alpha_i^\vee} + b_q(\alpha_i^\vee)\\
 
 & = & \sum_{k=1}^{n-1}(q_i '^{*k} - q_i '^{*(k-1)}) (\sqrt{q_iq'_i})^{-k} (-e^{-k\alpha_i^\vee}) + c_q(\alpha_i^\vee)e^{-n\alpha_i^\vee} + b_q(\alpha_i^\vee) \\
 
 & & \qquad + q_i '^{*n}(\sqrt{q_iq'_i})^{-n} (-e^{-n\alpha_i^\vee}) \\
 & = & \sum_{k=1}^{n-1}(q_i '^{*k} - q_i '^{*(k-1)}) (\sqrt{q_iq'_i})^{-k} (-e^{-k\alpha_i^\vee}) \\
& & + q_i^{-1/2}\big (q_i^{-1/2} - b(q_i^{-1/2}, (q'_i)^{-1/2}; e^{\alpha_i^\vee})\big )e^{-n\alpha_i^\vee} + q_i^{-1/2}b(q_i^{-1/2}, (q'_i)^{-1/2}; e^{\alpha_i^\vee}) -e^{-n\alpha_i^\vee}. \\
\end{array}
$$ 

Next, note that if $k = 2s$, then $(q_i '^{*k} - q_i '^{*(k-1)}) (\sqrt{q_iq'_i})^{-k} = (1-q_i^{-1})$ and if $k = 2t + 1$, then $(q_i '^{*k} - q_i '^{*(k-1)}) (\sqrt{q_iq'_i})^{-k} = q_i^{-1/2}(q_i'^{1/2}-q_i'^{-1/2})$. So, we split the last sum above into two sums:

$$
\begin{array}{rcl}
\sum_{k=1}^{n-1}\frac{q_i '^{*k} - q_i '^{*(k-1)}}{\sqrt{q_iq'_i}^{k}}  (-e^{-k\alpha_i^\vee}) & = & \sum_{s=1}^{\ell -1} (1-(q_i)^{-1})(-e^{-2s\alpha_i^\vee})  \\

 & &  
 + \sum_{t=0}^{\ell -1}q_i^{-1/2}(q_i'^{1/2}-q_i'^{-1/2})(-e^{-(2t+1)\alpha_i^\vee}).\\
 
    & = &  (1-(q_i)^{-1})\frac{1-e^{-2(\ell-1)\alpha_i^\vee}}{1- e^{2\alpha_i^\vee}}  \\

 & &  
 + q_i^{-1/2}(q_i'^{1/2}-q_i'^{-1/2})\frac{e^{\qa_i^\vee}-e^{-(2\ell-1)\alpha_i^\vee}}{1- e^{2\alpha_i^\vee}} .\\
 \end{array}
$$ 

Now, remember that
$$
b(q_i^{-1/2}, (q'_i)^{-1/2}; e^{\alpha_i^\vee}) = \frac{q_i^{-1/2}-q_i^{1/2} + ((q'_i)^{-1/2}-(q'_i)^{1/2})e^{\alpha_i^\vee}}{1- e^{2\alpha_i^\vee}}
$$ 

So we get
$$
\begin{array}{rcl}
\frac{RHS(1- e^{2\alpha_i^\vee})}{e^{\mu_0}} & = & 
(1-(q_i)^{-1})(1-e^{-2(\ell-1)\alpha_i^\vee})
+ q_i^{-1/2}(q_i'^{1/2}-q_i'^{-1/2})(e^{\qa_i^\vee}-e^{-(2\ell-1)\alpha_i^\vee})  \\
 
& &  + q_i^{-1/2}\big ( q_i^{-1/2}-q_i^{1/2} + ((q'_i)^{-1/2}-(q'_i)^{1/2})e^{\alpha_i^\vee}\big ) (1-e^{-2\ell\alpha_i^\vee})  \\

  & & + q_i^{-1}(1- e^{2\alpha_i^\vee})e^{-2\ell\alpha_i^\vee} 
  - (1- e^{2\alpha_i^\vee})e^{-2\ell\alpha_i^\vee} \\
  
     & = & (1-(q_i)^{-1})(1-e^{-2(\ell-1)\alpha_i^\vee})
+  q_i^{-1/2}\big ( q_i^{-1/2}-q_i^{1/2} ) (1-e^{-2\ell\alpha_i^\vee}) \\

  & & + q_i^{-1}(1- e^{2\alpha_i^\vee})e^{-2\ell\alpha_i^\vee} 
  - (1- e^{2\alpha_i^\vee})e^{-2\ell\alpha_i^\vee} \\
  
      & = & (1-(q_i)^{-1})\Big(1-e^{-2(\ell-1)\alpha_i^\vee} -1 + e^{-2\ell\alpha_i^\vee} - (1- e^{2\alpha_i^\vee})e^{-2\ell\alpha_i^\vee}\Big )  \\

     & = & 0 \\
  
\end{array}
$$ 

The computation in the case $n$ odd (hence $q_i=q_i'$) is similar and easier: one has not to cut the sum in an even and an odd sum.

\medskip
\par {\bf 4)} The case $n>0$,  $\g s_0 \not= \g s_1$ (and $n=2\ell$).

$$
\begin{array}{rcl}

\frac{RHS}{e^{\mu_0}} & = &  \frac{1}{e^{\mu_0}}   \sum_{k=-1}^{n} \#\{\mathbf s\mid k(\mathbf s) = k\} \, \qd_i^{1/2}(\mu).\Gamma_i(\qm,\rho_2\g s_0)\\ 

  & = & -1 +(q_i-1)(c_q(\alpha_i^\vee)e^{-n\alpha_i^\vee} + b_q(\alpha_i^\vee)) + q_i(c_q(\alpha_i^\vee) + b_q(\alpha_i^\vee)e^{-n\alpha_i^\vee} ) \\
 
 &  &+ \sum_{k=1}^{n-1} (q_i '^{*k} - q_i '^{*(k-1)})q_i (\sqrt{q_iq'_i})^{-k} (  c_q(\alpha_i^\vee)e^{-(n-k)\alpha_i^\vee} + b_q(\alpha_i^\vee)e^{-k\alpha_i^\vee}  ) \\

\end{array}
$$ 

\par Considering the above formulas for $(q_i '^{*k} - q_i '^{*(k-1)}) (\sqrt{q_iq'_i})^{-k} $, we get:

$$
\begin{array}{rcl}

\frac{RHS}{e^{\mu_0}} & = &   -1 +(q_i-1)(c_q(\alpha_i^\vee)e^{-n\alpha_i^\vee} + b_q(\alpha_i^\vee)) + q_i(c_q(\alpha_i^\vee) + b_q(\alpha_i^\vee)e^{-n\alpha_i^\vee} ) \\

  &  &+ \sum_{s=1}^{\ell-1} (q_i-1)(  c_q(\alpha_i^\vee)e^{-(n-2s)\alpha_i^\vee} + b_q(\alpha_i^\vee)e^{-2s\alpha_i^\vee}  ) \\

 &  &+ \sum_{t=0}^{\ell-1} q_i^{1/2}(q_i'^{1/2}-q_i'^{-1/2})(  c_q(\alpha_i^\vee)e^{-(n-2t-1)\alpha_i^\vee} + b_q(\alpha_i^\vee)e^{-(2t+1)\alpha_i^\vee}  ) \\

\end{array}
$$ 

\par We use now $c_q(\qa_i^\vee)=q_i^{-1}-b_q(\qa_i^\vee)$, so:

$$
\begin{array}{rcl}

\frac{RHS}{e^{\mu_0}} & = &   (q_i-1)b_q(\alpha_i^\vee)(1-e^{-2\ell\alpha_i^\vee}) + q_i b_q(\alpha_i^\vee)(e^{-2\ell\alpha_i^\vee}-1 ) \\

  &  &+ \sum_{s=0}^{\ell-1} (q_i-1)q_i^{-1} e^{-(n-2s)\alpha_i^\vee}  \\

 &  &+ \sum_{t=0}^{\ell-1} q_i^{-1/2}(q_i'^{1/2}-q_i'^{-1/2})  e^{-(n-2t-1)\alpha_i^\vee}  \\

  &  &+ \sum_{s=1}^{\ell-1} (q_i-1)b_q(\alpha_i^\vee)(e^{-2s\alpha_i^\vee} - e^{-(n-2s)\alpha_i^\vee}  ) \\

 &  &+ \sum_{t=0}^{\ell-1} q_i^{1/2}(q_i'^{1/2}-q_i'^{-1/2}) b_q(\alpha_i^\vee)(e^{-(2t+1)\alpha_i^\vee}  - e^{-(n-2t-1)\alpha_i^\vee} ) \\
 
\end{array}
$$ 

\par The last two sums are equal to zero, so, using the formula for $b_q(\alpha_i^\vee)$, we get:

$$
\begin{array}{rcl}
\frac{RHS(1- e^{2\alpha_i^\vee})}{e^{\mu_0}} & = &    q_i^{-1/2}\big ( q_i^{-1/2}-q_i^{1/2} + ((q'_i)^{-1/2}-(q'_i)^{1/2})e^{\alpha_i^\vee}\big ) (e^{-2\ell\alpha_i^\vee}-1)  \\

  & & + (1-q_i^{-1}) (e^{-2\ell\alpha_i^\vee}-1) + q_i^{-1/2}(q_i'^{1/2}-q_i'^{-1/2})  (e^{-(2\ell-1)\alpha_i^\vee} -e^{\qa_i^\vee}) \\
  
       & = & 0 \\
  
\end{array}
$$

\par The computation in the case $n$ odd (hence $q_i=q_i'$) is similar and easier.


\section{Representation of the Bernstein-Lusztig-Hecke algebra}\label{s5}

 The aim of this section is to define an action of the  Bernstein-Lusztig-Hecke algebra introduced in  \cite{BPGR16} via Demazure-Lusztig operators.
 In this section and the following {two}, we do not use the masure and all the results  in the preceding sections, except subsections \ref{1.1}, \ref{1.2} and \ref{1.3}.5 (for the definition of $Y$).
 
 \subsection{Notation}\label{5.0}
 
We shall work with the ring $R=\Z_\qs:=\Z[{(\qs_{i}}^{\pm 1}, {\qs'_{i}}^{\pm 1})_{ i\in I} ]$ where the indeterminates ${\qs_{i}}, {\qs'_{i}}$ (for $i\in I$) satisfy the following relations:  if  $\qa_i(Y)=\Z$, then ${\qs_{i}}={\qs'_{i}}$; 
if $r_i$ and $r_j$ are conjugated (i.e. if $\qa_i(\qa_j^\vee)=\qa_j(\qa_i^\vee)=-1$), then ${\qs_{i}}={\qs_{j}}={\qs'_{i}}={\qs'_{j}}$. 
But we shall also consider the subring $\Z'_\qs:=\Z[{(\qs_{i}.\qs'_i}, \qs_i.(\qs'_{i})^{-1})_{ i\in I} ]\supset \Z[(\qs_{i}^2)_{ i\in I} ]$. 
 We may also introduce further equalities between the indeterminates $\qs_i,\qs'_i$, to be closer to the parameters $q_i,q_i'$ of a specific masure.
 For example, the case of a split Kac-Moody group suggests to consider the equality of all $\qs_i,\qs'_i$ to some indeterminate $\qs$; then $\Z_\qs$ (\resp $\Z'_\qs$) becomes $\Z[\qs^{\pm1}]$ (\resp $\Z[\qs^2]$).

The above conditions on the $\qs_i,\qs'_i$ enable us to define $\qs_\qa$ and $\qs'_\qa$ for all $\qa  \in \Phi$ by $\qs_\qa=\qs_{i}$ and $\qs'_\qa=\qs'_{i}$ as soon as $\qa\in W^v\qa_i$. For any reduced decomposition $w=r_{i_1}\ldots r_{i_n}$ of $w\in W^v$, we get also that $\qs_w:=\prod_{k=1}^n\,\qs_{i_k}$ is independent of the decomposition.

\cache{ 
\smallskip
Recall the functions $b$ and $c$ defined at the beginning of \ref{s4}: 
 \begin{equation}\label{5.5.2} b(t,u; z) ={{(t-t^{-1}) + (u-u^{-1})z}\over {1-z^2}}, \quad c(t, u; z)= t-b(t, u ; z).\end{equation} 
For $\qa\in \Phi $ (with, as before $\qs_\qa=\qs_i $ and $\qs'_\qa=\qs'_i $ when $\qa\in W^v\qa_i$),  we denote 
 \begin{equation}\label{eq:5.5.2}\begin{cases}
 b(\qa^\vee)= b(\qs_\qa, \qs'_\qa ; e^{\qa^\vee}),\quad\qquad\qquad\qquad\quad b'(\qa^\vee)=\qs_\qa b(\qa^\vee)\\
 c(\qa^\vee)= c(\qs_\qa, \qs'_\qa ; e^{\qa^\vee})=\qs_\qa-b(\qa^\vee),\quad\quad c'(\qa^\vee)=\qs_\qa c(\qa^\vee).
 \end{cases}
 \end{equation} 
If  $\qa^\vee\in -Q_+^\vee\cap \Phi^\vee$, then
\begin{equation}\label{5.5.4}\displaystyle 
{b(\qa^\vee)= \biggl((\qs_\qa-{\qs_\qa}^{-1})+(\qs'_\qa-{\qs'_\qa}^{-1})e^{\qa^\vee}\biggr )\sum_{k=0}^{\infty}e^{2k\qa^\vee}.}
\end{equation}
If $\qa^\vee\in Q_+^\vee\cap \Phi^\vee$, then
\begin{equation}\label{5.5.5} \displaystyle
{ b({\qa^\vee})=-e^{-\qa^\vee} \biggl ( (\qs_\qa-{\qs_\qa}^{-1})e^{-\qa^\vee}+(\qs'_\qa-{\qs'_\qa}^{-1})\biggr )(\sum_{k=0}^{\infty}e^{-2k\qa^\vee}}).
\end{equation}



}  

\subsection{The Bernstein-Lusztig-Hecke algebra $^{BL}\mathcal H_{\Z_\qs}$}\label{5.2}

In \cite{BPGR16} the Bernstein-Lusztig-Hecke algebra,  $^{BL}\mathcal H_{\Z_\qs}$  is defined as an  associative algebra in the following way.   
 
\par We consider $\A$ as in \ref{1.2} and $Aut(\A)\supset W=W^v\ltimes Y\supset W^a$, with $Y$ a discrete group of translations. We denote by  $^{BL}\mathcal H_{\Z_\qs}$ the free $\Z_\qs-$module with basis $(Z^\ql H_w)_{\ql\in Y, w\in W^v}$.
 For short, we write $H_i=H_{r_i}, H_w=Z^0H_w$ and 
  $Z^\ql=Z^\ql H_e$, where $e$ is the unit element in $W^v$ (and $H_e=Z^0$ will be the multiplicative  unit element in $^{BL}\mathcal H_{\Z_\qs}$).
\begin{prop*} \cite[6.2]{BPGR16}
 There exists a unique multiplication $*$ on $^{BL}\mathcal H_{\Z_\qs}$ which makes it an associative unitary $\Z_\qs$-algebra with unity $H_e$ and satisfies the following conditions:

 (1) $ \forall \ql\in Y \quad  \forall w\in W^v \qquad Z^\ql*H_w= Z^\ql H_w$, 

 \smallskip

  (2) $ \forall i\in I \quad \forall w\in W^v \qquad H_i*H_w= H_{r_iw}\, $ if $\,\ell (r_iw)>\ell (w)$

 \qquad\qquad\qquad\qquad\qquad\quad $H_i*H_w = ({\qs_{i}}-{\qs_{i}}^{-1}) H_w+H_{r_iw}\,$ if $\, \ell (r_iw)<\ell (w)$,

   \smallskip
  (3) $ \forall \ql\in Y \quad  \forall \qm\in Y \qquad Z^\ql*Z^\qm= Z^{\ql+\qm}$,

   \smallskip

  (4) $\forall \ql\in Y \quad  \forall i\in I \qquad H_i*Z^\ql-Z^{r_i(\ql)}*H_i=b(\qs_i, \qs'_i; Z^{-\qa_i^\vee}) (Z^\ql-Z^{r_i(\ql)})$.
  

\end{prop*}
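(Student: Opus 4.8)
I would realise ${}^{BL}\mathcal H_{\Z_\qs}$ as a presented algebra and prove a PBW-type basis theorem for it. Let $\mathcal A$ be the $\Z_\qs$-algebra with generators $Z^\ql$ ($\ql\in Y$) and $H_i$ ($i\in I$), subject to (R1) $Z^\ql Z^\mu=Z^{\ql+\mu}$, $Z^0=1$; (R2) the quadratic relations $(H_i-\qs_i)(H_i+\qs_i^{-1})=0$; (R3) the braid relations of the Coxeter system $(W^v,\{r_i\mid i\in I\})$ for the $H_i$; and (R4) $H_iZ^\ql=Z^{r_i\ql}H_i+B_i(\ql)$, where $B_i(\ql):=b(\qs_i,\qs'_i;Z^{-\qa_i^\vee})(Z^\ql-Z^{r_i\ql})$. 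First one must check that $B_i(\ql)$ is a well-defined element of $\Z_\qs[Y]$: writing $\ql-r_i\ql=\qa_i(\ql)\qa_i^\vee$ one has $Z^\ql-Z^{r_i\ql}=Z^\ql(1-Z^{-\qa_i(\ql)\qa_i^\vee})$, which in $\Z_\qs[Y]$ is divisible by the denominator $1-Z^{-2\qa_i^\vee}$ of $b(\qs_i,\qs'_i;Z^{-\qa_i^\vee})$ when $\qa_i(\ql)$ is even; and when $\qa_i(\ql)$ is odd, hence $\qa_i(Y)=\Z$, the conventions of \ref{5.0} impose $\qs_i=\qs'_i$, so $b(\qs_i,\qs'_i;Z^{-\qa_i^\vee})=\tfrac{\qs_i-\qs_i^{-1}}{1-Z^{-\qa_i^\vee}}$ and divisibility by $1-Z^{-\qa_i^\vee}$ suffices.

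The crux is the claim that $\mathcal A$ is a free $\Z_\qs$-module with basis $(Z^\ql H_w)_{\ql\in Y,\,w\in W^v}$, where $H_w:=H_{i_1}\cdots H_{i_n}$ for any reduced expression $w=r_{i_1}\cdots r_{i_n}$ (well defined by (R3) and Matsumoto's lemma). Granting this, I would set ${}^{BL}\mathcal H_{\Z_\qs}:=\mathcal A$: relations (1)--(4) of the Proposition are then read off the presentation, the case $\ell(r_iw)<\ell(w)$ of (2) from $H_i H_w=(H_i H_i)H_{r_iw}=\big((\qs_i-\qs_i^{-1})H_i+1\big)H_{r_iw}$ via (R2). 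For uniqueness, if $*$ is any associative unital product on the free $\Z_\qs$-module with basis $(Z^\ql H_w)$ satisfying (1)--(4), then (1)--(4) imply all of (R1)--(R4) — (R1) from (3), (R2) from (2) applied to $w=r_i$, (R3) because associativity and (2) give $H_{i_1}*\cdots*H_{i_n}=H_w$ for every reduced word $w=r_{i_1}\cdots r_{i_n}$, and (R4) being (4) itself — so $Z^\ql\mapsto Z^\ql$, $H_i\mapsto H_i$ extends to a surjective algebra homomorphism $\mathcal A\to({}^{BL}\mathcal H_{\Z_\qs},*)$ between free $\Z_\qs$-modules of equal rank, hence an isomorphism; thus $*$ is the multiplication of $\mathcal A$.

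It remains to prove the basis claim. For spanning, any word in the $Z^\ql$ and $H_i$ is reduced to the form $Z^\nu H_{i_1}\cdots H_{i_k}$ by collapsing $Z$-factors with (R1) and pushing each $Z$-factor leftwards past an $H_i$ with (R4) — a move that either preserves the number of $H$-letters (the term $Z^{r_i\ql}H_i$) or strictly decreases it (the correction $B_i(\ql)\in\Z_\qs[Y]$) — and then the tail $H_{i_1}\cdots H_{i_k}$ is rewritten as some $H_w$ with $\ell(w)\le k$ using (R2)--(R3), exactly as in the finite Hecke algebra of $W^v$; an induction on the number of $H$-letters (secondarily on $k$) gives termination. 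For independence, I would use the Demazure--Lusztig action on $\Z_\qs[Y]$, where $Z^\ql$ acts by multiplication by $e^\ql$, $W^v$ acts by $e^\mu\mapsto e^{w\mu}$, and $H_i:=\qs_i\,r_i+b(\qs_i,\qs'_i;e^{-\qa_i^\vee})(1-r_i)=b(\qs_i,\qs'_i;e^{-\qa_i^\vee})+c(\qs_i,\qs'_i;e^{-\qa_i^\vee})\,r_i$; the divisibility above shows $H_i$ preserves $\Z_\qs[Y]$, the quadratic relation for $H_i$ reduces (using \eqref{5.5.3b}) to $b(\qs_i,\qs'_i;e^{-\qa_i^\vee})+b(\qs_i,\qs'_i;e^{\qa_i^\vee})=\qs_i-\qs_i^{-1}$ together with $c\cdot{}^{r_i}c=1+b\cdot{}^{r_i}b$, and (R4) holds because the only noncommuting step is moving a multiplication operator through $r_i$. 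Granting the braid relations, this defines an algebra homomorphism $\mathcal A\to F\rtimes W^v$ with $F:=\operatorname{Frac}(\Z_\qs[Y])$, so by induction on $\ell(w)$ one gets $H_w\mapsto g_w\,w+\sum_{v<w}f_{w,v}\,v$ with $f_{w,v}\in F$ and $g_w\in F^\times$ a product of $W^v$-conjugates of the $c(\qs_i,\qs'_i;e^{-\qa_i^\vee})$; hence $Z^\ql H_w\mapsto e^\ql g_w\,w+\sum_{v<w}e^\ql f_{w,v}\,v$, a family triangular, with invertible leading coefficient $e^\ql g_w$, with respect to the $F$-basis $\{w\mid w\in W^v\}$ of $F\rtimes W^v$. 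Extracting the $w$-component of a supposed relation $\sum_{\ql,w}c_{\ql,w}Z^\ql H_w=0$ for a Bruhat-maximal $w$ occurring yields $g_w\sum_\ql c_{\ql,w}e^\ql=0$, whence $\sum_\ql c_{\ql,w}e^\ql=0$ in $\Z_\qs[Y]$ and all $c_{\ql,w}=0$: the $Z^\ql H_w$ are $\Z_\qs$-linearly independent.

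The step I expect to be the genuine obstacle is checking that the Demazure--Lusztig operators $H_i$ satisfy the braid relations (R3) — equivalently, that the rewriting is confluent inside each rank-two standard parabolic. Since $\qs_i$ and $\qs'_i$ need not coincide, this has to be done type by type ($A_1\times A_1$, $A_2$, $B_2$, $G_2$, and the infinite dihedral case), and it is precisely here that the compatibility relations of \ref{5.0} (equality of the parameters when $r_i,r_j$ are conjugate, and so on) enter; the computations are routine but lengthy. As a sanity check, when $Y=Q^\vee$, so $W=W^a=W^v\ltimes Q^\vee$, and the parameters are suitably equal, $\mathcal A$ specialises to a classical affine Hecke algebra in its Bernstein presentation, for which this basis theorem is standard; the work here is to run the argument for a general lattice $Q^\vee\subseteq Y\subseteq P^\vee$ and possibly unequal parameters.
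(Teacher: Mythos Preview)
The paper does not give its own proof of this proposition: it is simply quoted from \cite[6.2]{BPGR16}. So there is no ``paper's proof'' to compare against here.

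That said, your sketch is correct and is essentially the standard Bernstein--Lusztig argument. A few remarks. First, your independence step via the Demazure--Lusztig representation in $F\rtimes W^v$ is exactly what the present paper carries out later, in \ref{5.5.00}--\ref{5.5.03}, to prove that the map $\Psi:{}^{BL}\mathcal H_{\Z_\qs}\to\Z_\qs(Y)[W^v]$ is injective (the triangular form $H_w\mapsto g_w[w]+\sum_{v<w}f_{w,v}[v]$ with $g_w$ a product of $c$-functions is precisely formula (\ref{eq:5.5.5}) together with Remark \ref{5.5.02}). So your linear-independence argument and the paper's embedding argument are the same idea. Note, incidentally, that the paper normalises by $Z^\ql\mapsto e^{-\ql}$, so the $b$ and $c$ factors appear with $e^{\qa_i^\vee}$ rather than your $e^{-\qa_i^\vee}$; your convention is internally consistent but opposite to theirs.

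Second, your identification of the braid relations as the crux is right, but your plan to check them ``type by type'' in rank two is slightly misdirected: in a general Kac-Moody group the dihedral subgroup $\langle r_i,r_j\rangle$ may be infinite (when $a_{ij}a_{ji}\ge 4$), and in that case there is no braid relation to check at all. The finite dihedral cases $A_1\times A_1$, $A_2$, $B_2$, $G_2$ are indeed the only ones requiring a computation, and the parameter compatibilities of \ref{5.0} are exactly what make them go through. Alternatively, as in \cite{BPGR16}, one can avoid the explicit braid check by first realising $\Z_\qs[Y]$ as an induced module ${}^{BL}\mathcal H_{\Z_\qs}\otimes_{\mathcal H_{\Z_\qs}(W^v)}\Z_\qs$ (which only uses that $\mathcal H_{\Z_\qs}(W^v)$ exists, a classical fact) and then reading off the operator identities from the module structure; this is the route the paper takes in \ref{5.5.00}.
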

 
\parni{\bf  1)} This $\Z_\qs-$algebra depends only on $\A$ and $Y$ (\ie on $\A$ and $W$) and is called the Bernstein-Lusztig-Hecke algebra over $\Z_\qs$ (associated to $\A$ and $W$). It follows from this proposition (and the remark 2) below) that we obtain a presentation of  $^{BL}\mathcal H_{\Z_\qs}$ by generators $\{H_i, Z^\ql\}_{i\in I, \ql\in Y} $ and relations ((2), (3) and (4) above).

\medskip
\parni{\bf  2)} The sub-algebra with basis $(H_w)_{w\in W^v}$ is the so called  ``Hecke algebra of the group $W^v$ over $\Z_\qs$'' and is denoted by $\mathcal H_{\Z_\qs} (W^v)$. This $\Z_\qs-$algebra  is generated by the $\bigl(H_i\bigr)_{i\in I} $; with relations the braid relations  and $H_i^2=(\qs_i-\qs_i^{-1})H_i+H_e$. 
  We have also, 
   \begin{align*}\forall w,w'\in W^v \qquad H_w*H_{w'}&= H_{ww'}\,\qquad \text { if }\,\ell (ww')= \ell (w)+ \ell(w') \\
  \forall i\in I \,\,\, \forall w\in W^v \qquad H_w*H_i&= H_{wr_i}*H_i*H_i= (\qs_i-\qs_i^{-1})H_w+H_{wr_i}\, \text { if }\, \,\ell (wr_i)<\ell (w).
 \end{align*}

\parni{\bf  3)} The submodule $\mathcal H_{\Z_\qs} (Y)$ with basis $(Z^\ql)_{\ql\in Y}$ is a commutative subalgebra which may be identified to $\Z_\qs[Y]=\{\sum_{\ql\in Y}c_\ql e^{\ql}\, |\,  c_\ql=0  \, \text {, except for a finite number of terms} \}$. In the following, we favor the isomorphism defined by $e^\ql \mapsto Z^{-\ql}$.

\medskip
\parni{\bf  4)} Let us define $T_i:=\qs_iH_i$ and $T_w=\qs_wH_w$. So $(Z^\ql*T_w)_{\ql\in Y,w\in W^v}$ is a new $\Z_\qs-$basis of $^{BL}\mathcal H_{\Z_\qs}$. These new elements satisfy the following relations (for $i\in I, w\in W^v, \ql\in Y$):

\smallskip
 $ T_i*T_w= T_{r_iw}\, $ if $\,\ell (r_iw)>\ell (w)$
and $T_i*T_w = ({\qs_{i}}^2-1) T_w+{\qs_{i}}^2T_{r_iw}\,$ if $\, \ell (r_iw)<\ell (w)$,

 $T_i*Z^\ql-Z^{r_i(\ql)}*T_i=b'(\qs_i, \qs'_i; Z^{-\qa_i^\vee}) (Z^\ql-Z^{r_i(\ql)})$;
  
\parni where $b'(t,u; z)=t.b(t,u;z)={{(t^2-1) + (t.u-t.u^{-1})z}\over {1-z^2}}$.
  
  So these elements generate a $\Z'_\qs-$algebra $^{BL}\mathcal H_{\Z'_\qs}$ which is also given by generators and relations.
  Clearly $^{BL}\mathcal H_{\Z_\qs}=^{BL}\mathcal H_{\Z'_\qs}\otimes_{\Z'_\qs}\Z_\qs$.

\medskip
\parni{\bf  5)} There are several involutive automorphisms or anti-automorphisms of $^{BL}\mathcal H_{\Z_\qs}$: 

\smallskip
\par\quad a) We already remarked in \cite{BPGR16} that 
$$
Z^\ql*H_i-H_i*Z^{r_i(\ql)} =H_i*Z^\ql-Z^{r_i(\ql)}*H_i=b(\qs_i, \qs'_i; Z^{-\qa_i^\vee}) (Z^\ql-Z^{r_i(\ql)})
$$ (just replace $\ql$ by $r_i(\ql)$ in \ref{5.2} (4)).
So we deduce from 1) above that the algebra $^{BL}\mathcal H_{\Z_\qs}$ is $\Z_\qs-$isomorphic to its opposite: the $\Z_\qs-$linear map $\mathfrak{i}: {^{BL}}\mathcal H_{\Z_\qs} \to {^{BL}}\mathcal H_{\Z_\qs}, Z^\ql H_w \mapsto H_{w^{-1}}*Z^\ql
$ is an involutive $\Z_\qs-$anti-automorphism.

\smallskip
\par\quad b) Clearly $\Z_\qs$ has a commutative group of involutive automorphisms $\qf_{\underline\qe,\underline\eta,\underline\eta'}$ (for $\underline\qe=(\qe_i)_{i\in I},\underline\eta=(\eta_i)_{i\in I},\underline\eta'=(\eta'_i)_{i\in I}\in\{\pm1\}^I$, with $\qe_i=\qe_j,\eta_i=\eta_j=\eta'_i=\eta'_j$ if $\qs_i=\qs_j$ and $\eta_i=\eta'_i$ if $\qs_i=\qs'_i$), sending $\qs_i$ to $\qe_i\eta_i\qs_i^{\eta_i}$ and $\qs'_i$ to $\qe_i\eta'_i(\qs'_i)^{\eta'_i}$ (hence also $b(\qs_i,\qs'_i;Z^{-\qa_i^\vee})$ to $\qe_ib(\qs_i,\qs'_i;Z^{-\qa_i^\vee}$).
This group acts equivariantly on the $\Z_\qs-$algebra $^{BL}\mathcal H_{\Z_\qs}$: $\qf_{\underline\qe,\underline\eta,\underline\eta'}$ sends $Z^\ql$ to $Z^\ql$ and $H_i$ to $\qe_iH_i$.
Two of these automorphisms are particularly interesting $\mathfrak{b}=\qf_{\underline\qe,\underline\eta,\underline\eta'}$ for $\qe_i=1,\eta_i=\eta'_i=-1$ for any $i$ and $\mathfrak{j}=\qf_{\underline\qe,\underline\eta,\underline\eta'}$ for $\qe_i=\eta_i=\eta'_i=-1$ for any $i$.

\smallskip
\par\quad c) There is an involutive $\Z_\qs-$anti-automorphism $\mathfrak{a}$ (\resp $\Z_\qs-$automorphism $\mathfrak{a}'=\mathfrak{ai}$) of $^{BL}\mathcal H_{\Z_\qs}$ sending $Z^\ql$ to $Z^{-\ql}
$ (for $\ql\in Y$) and $H_i$ to $
(\qs_i-\qs_i^{-1})-H_i=-H_i^{-1}$ (for $i\in I$).
This is a consequence of the definition of $^{BL}\mathcal H_{\Z_\qs}$ by generators and relations.
We leave to the reader the verification that (4) is satisfied by $\mathfrak{a}(Z^\ql)$ and $\mathfrak{a}(H_i)$ (4 cases to consider, according to the parity and sign of $\qa_i(\ql)$).

\smallskip
\par \quad d) Then $\mathfrak{d}=\mathfrak{ja}'=\mathfrak{jai}$ is an  involutive automorphism: $\mathfrak{d}(h)=\overline{h}$, with $\overline{\qs_i}=\qs_i^{-1}$, $\overline{\qs'_i}=(\qs_i')^{-1}$, $\overline{Z^\ql}=Z^{-\ql}$, $\overline{H_i}=H_i^{-1}$ and $\overline{H_w}=H_{w^{-1}}^{-1}$.
{ Its restriction to $\mathcal H_{\Z_\qs} (W^v)$ is the Kazhdan-Lusztig involution.}

\medskip
\parni{\bf  6)} With notation of Section \ref{s1}, consider the homomorphism $\Z_\qs\to\R,\qs_i\mapsto \sqrt{q_i},\qs'_i\mapsto \sqrt{q'_i}$.
We saw in \cite{BPGR16} that $^{BL}\mathcal H_{\R}=\R\otimes_{\Z_\qs}{^{BL}\mathcal H}_{\Z_\qs}$ contains a subalgebra $^{BL}\mathcal H_{\R}^+$ (with $\R-$basis $Z^\ql H_w$ for $\ql\in Y^+$ and $w\in W^v$), isomorphic to the Iwahori-Hecke algebra $^{I}\mathcal H^+_{\R}$.

We saw in \ref{2.4}.c) that the negative Iwahori-Hecke algebra $^{I}\mathcal H^-_{\R}$ is anti-isomorphic to  $^{I}\mathcal H^+_{\R}$ via a map $\qi$.
So, using the anti-isomorphism $\mathfrak{a}$ above, we see that $^{I}\mathcal H^-_{\R}$ is isomorphic to the $\R-$subalgebra $^{BL}\mathcal H_{\R}^-$ of $^{BL}\mathcal H_{\R}$, generated by the $Z^\ql $ and $H_i$ for $\ql\in -Y^+$ and $i\in I$.

\subsection{The algebra $ \Z_\qs(Y)[W^v]$}\label{5.1}

\par We may replace $\Z_\qs$ by $\Z'_\qs$ everywhere in this subsection.

\subsubsection{The algebra $ \Z_\qs(Y)$}\label{4.3.1}

 We consider, for any $w\in W^v$, the completion $  \Z_\qs[[Y,-wQ^\vee_+]]$ of $  \Z_\qs[Y]$ (in the direction of $-wQ_+^\vee$) as the algebra consisting of all infinite formal sums $f=\sum_{y\in Y} a_ye^y$ (with $a_y\in  \Z_\qs$)  such that $supp(f):=\{ y\in Y\, |\, a_y\not= 0\} \subset \bigcup_{i=1}^n\,(\ql_i- wQ_+^\vee)$ for some $\ql_1, \ldots, \ql_n\in Y$. We may also consider the subalgebra  $  \Z_\qs[[-wQ^\vee_+]]$  of $  \Z_\qs[[Y,-wQ^\vee_+]]$ consisting of the infinite formal sums $f$ such that $supp(f) \subset - wQ_+^\vee$.
We denote the field of fractions of $\Z_\qs[Y]$ by $Fr (\Z_\qs[Y])$
and  consider 
\begin{equation} \label{5.1.2} \Z_\qs(Y) = \bigg(\bigcap_{w\in W^v}  \Z_\qs[[Y,-wQ^\vee_+]]\bigg ) \cap Fr (\Z_\qs[Y])
\end{equation} which clearly is an algebra. Typical elements of this algebra are given using the functions $b$ and $c$ defined at the beginning of \ref{s4}: 
 \begin{equation}\label{5.5.2} b(t,u; z) ={{(t-t^{-1}) + (u-u^{-1})z}\over {1-z^2}}, \quad c(t, u; z)= t-b(t, u ; z).\end{equation} 
Indeed, for $\qa\in \Phi $ (with, as before $\qs_\qa=\qs_i $ and $\qs'_\qa=\qs'_i $ when $\qa\in W^v\qa_i$),  we set 
 \begin{equation}\label{eq:5.5.2}\begin{cases}
 b(\qa^\vee)= b(\qs_\qa, \qs'_\qa ; e^{\qa^\vee}),\quad\qquad\qquad\qquad\quad b'(\qa^\vee)=\qs_\qa b(\qa^\vee)\\
 c(\qa^\vee)= c(\qs_\qa, \qs'_\qa ; e^{\qa^\vee})=\qs_\qa-b(\qa^\vee),\quad\quad c'(\qa^\vee)=\qs_\qa c(\qa^\vee).
 \end{cases}
 \end{equation} 
Let us check that, for any $\qa\in \Phi$, $ b(\qa^\vee)$ and  $ c(\qa^\vee)$ are in $\Z_\qs(Y)$. Indeed (these details will be useful later), 
if  $\qa^\vee\in -Q_+^\vee\cap \Phi^\vee$, then
\begin{equation}\label{5.5.4}\displaystyle 
{b(\qa^\vee)= \biggl((\qs_\qa-{\qs_\qa}^{-1})+(\qs'_\qa-{\qs'_\qa}^{-1})e^{\qa^\vee}\biggr )\sum_{k=0}^{\infty}e^{2k\qa^\vee}.}
\in \qs_\qa^{-1}\Z'_\qs[[e^{2\qa^\vee}]]\subset\Z_{\qs}[[-Q_+^\vee]]
\end{equation}
If $\qa^\vee\in Q_+^\vee\cap \Phi^\vee$, then
\begin{equation}\label{5.5.5} \displaystyle
\begin{cases}
{ b({\qa^\vee})=-e^{-\qa^\vee} \biggl ( (\qs_\qa-{\qs_\qa}^{-1})e^{-\qa^\vee}+(\qs'_\qa-{\qs'_\qa}^{-1})\biggr )(\sum_{k=0}^{\infty}e^{-2k\qa^\vee}})
\\
\qquad\qquad\qquad\qquad\quad
\in\qs_\qa^{-1}e^{-\qa^\vee}\Z'_\qs[[e^{-2\qa^\vee}]]\subset
e^{-\qa^\vee}\Z_{\qs}[[-Q_+^\vee]].
\end{cases}
\end{equation}

By  (\ref{5.5.2}) and (\ref{5.5.5}), $\text {  for } \qa^\vee\in Q_+^\vee\cap \Phi^\vee$: 
\begin{equation}\label{5.5.6} \quad c({\qa^\vee})-\qs_\qa= -b({\qa^\vee})\in \qs_\qa^{-1}e^{-\qa^\vee}\Z'_\qs[[e^{-2\qa^\vee}]]\subset
e^{-\qa^\vee}\Z_{\qs}
[[-Q_+^\vee]]\end{equation} 

by  (\ref{5.5.3c}), and (\ref{5.5.5}), $\text {  for } \qa^\vee\in -Q_+^\vee\cap \Phi^\vee$:
\begin{equation}\label{5.5.7}c({\qa^\vee})-{\qs_\qa}^{-1}=b({\qs_\qa}, {\qs'_\qa}; e^{-\qa^\vee})\in \qs_\qa^{-1}e^{\qa^\vee}\Z'_\qs[[e^{2\qa^\vee}]]\subset
e^{\qa^\vee}\Z_{\qs}.
[[-Q_+^\vee]]\end{equation}

\smallskip
Therefore, for $\qa^\vee\in \Phi^\vee$ and any $w\in W^v$, $b(w^{-1}\qa^\vee)=b(\qs_\qa, \qs'_\qa; e^{w^{-1}\qa^\vee})$ and  $c(w^{-1}\qa^\vee)=c(\qs_\qa, \qs'_\qa; e^{w^{-1}\qa^\vee})$ are in $ \Z_{\qs}
[[-Q_+^\vee]]$, so  \begin{equation}  \label{5.5.7a} b({\qa^\vee}) \text{ and } c({\qa^\vee})\text { are in } \Z_{\qs}[[-w(Q_+^\vee)]]\subset \Z_{\qs}
[[Y, -w(Q_+^\vee)]]. \end{equation} 
 
 And we can say that 

\begin{equation}  \label{5.5.8} b(\qa^\vee) \text{ and } c({\qa^\vee})\text { are in }\Z_{\qs}(Y),\  b'(\qa^\vee) \text{ and } c'({\qa^\vee})\text { are in }\Z'_{\qs}(Y).
 \end{equation}

\subsubsection{The algebra $ \Z_\qs(Y)[W^v]$} \label{4.3.2}

Now, we set 
\begin{equation} \label{5.1.2a}   \Z_\qs(Y)[W^v]:=\{\sum_{w\in F} f_w[w]\, |\, F\text{ is a finite subset of } W^v, f_w\in  \Z_\qs(Y)\}
\end{equation} and we can see that it is an algebra for the multiplication rule 
$$ 
(\sum_{w\in F} c_w[w]).(\sum_{v\in F'} d_v[v])=\sum_{(w,v)\in F\times F'}c_w {(^{w}d_v)}[wv],
$$ where $^w d_v = {^{w}(\sum_{y\in Y} a_ye^y)} = \sum_{y\in Y} a_ye^{wy}$.

\medskip
This algebra $\Z_\qs(Y)[W^v]$ acts on $\Z_\qs(Y) $; explicitly, for $F$ a finite subset of $W^v$  and $\sum_{y\in Y} a_ye^y\in \Z_\qs(Y)$, we have:
 \begin{equation}\label{5.1.2b}
 \ (\sum_{w\in F} c_w[w])*(\sum_{y\in Y} a_ye^y)=\sum_{w\in F}\sum_{y\in Y} c_w a_ye^{wy}.\end{equation}
 
\begin{lemm*}
Consider the morphism of algebras defined by this action: 
\begin{equation}\label{5.1.3}\phi : \Z_\qs(Y)[W^v]\to  End_{\Z_\qs} (\Z_\qs (Y)). 
\end{equation} Then $\phi$ is injective. Actually, the morphism 
\begin{equation}\label{5.1.4}\phi_1: \Z_\qs(Y)[W^v]\to  {Hom}_{\Z_\qs}  (\Z_\qs [Y], Z_\qs(Y))
\end{equation} is already an injection, where $\phi_1(C)$ denotes the restriction of $\phi(C)$ to $\Z_\qs [Y]$.
\end{lemm*}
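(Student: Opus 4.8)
The plan is to derive both assertions from the single statement $\ker\phi_1=0$. Since $\phi_1(C)$ is by definition the restriction of $\phi(C)$ to $\Z_\qs[Y]\subseteq\Z_\qs(Y)$, we have $\ker\phi\subseteq\ker\phi_1$, so injectivity of $\phi_1$ forces injectivity of $\phi$. Thus let $C=\sum_{w\in F}f_w[w]$, with $F\subseteq W^v$ finite and all $f_w\in\Z_\qs(Y)$, satisfy $\phi_1(C)=0$; the goal is $C=0$. Evaluating $\phi_1(C)$ on the basis elements $e^y$ of $\Z_\qs[Y]$ and using the action rule $(\ref{5.1.2b})$, the hypothesis $\phi_1(C)=0$ becomes
$$\sum_{w\in F}f_w\,e^{wy}=0\qquad\text{for every }y\in Y,$$
an identity inside $\Z_\qs(Y)$.

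Now I would exploit the inclusion $\Z_\qs(Y)\subseteq K:=Fr(\Z_\qs[Y])$: here $\Z_\qs[Y]$ is a Laurent polynomial ring over the domain $\Z_\qs$, hence an integral domain, so $K$ is a field. For each $w\in W^v$ the map $\chi_w\colon Y\to K^\times$, $y\mapsto e^{wy}$, is a group homomorphism, i.e.\ a $K$-valued character of the group $Y$. These characters are pairwise distinct: $\chi_w=\chi_{w'}$ would force $wy=w'y$ for all $y\in Y$, and $W^v$ acts faithfully on $Y$ — it acts faithfully already on $Q^\vee\subseteq Y$, since any $w\ne e$ in the Coxeter system $(W^v,\{r_i\mid i\in I\})$ sends at least one positive coroot to a negative coroot (the inversion set of $w$ has cardinality $\ell(w)\geq1$), so $w$ does not fix $Q^\vee$ pointwise. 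The displayed identity then reads $\sum_{w\in F}f_w\chi_w=0$ as a function $Y\to K$, with coefficients $f_w\in\Z_\qs(Y)\subseteq K$; by the Dedekind--Artin theorem on linear independence of distinct characters of a group with values in a field, all $f_w$ vanish, hence $C=0$.

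The computations here are entirely routine; the point that genuinely carries the lemma — and which I would state explicitly rather than take for granted — is the faithfulness of the $W^v$-action on $Y$. It is indispensable: if two $[w]$, $[w']$ acted identically, then $[w]-[w']$ would be a nonzero element of $\ker\phi$ (the $[w]$ forming a $\Z_\qs(Y)$-basis of $\Z_\qs(Y)[W^v]$), and the lemma would be false. Once this faithfulness is recorded (it follows from \ref{1.1} and the conditions of \cite[1.1]{R11}, which realize $(W^v,\{r_i\})$ with linearly independent simple coroots and a $W^v$-stable decomposition $\QF^\vee=\QF^{\vee+}\sqcup\QF^{\vee-}$), the independence-of-characters theorem closes the argument with nothing further to verify.
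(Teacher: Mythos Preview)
Your proof is correct and takes a genuinely different route from the paper's. The paper argues constructively: given a nonzero $C=\sum_{w\in F}\bigl(\sum_\lql c_{w,\ql}e^\ql\bigr)[w]$, it picks a pair $(w_1,\ql_1)$ with $c_{w_1,\ql_1}\neq0$, then exhibits a specific test element $e^{Nw_1^{-1}\nu}$ (with $\nu$ strictly dominant and $N$ large) on which $\phi_1(C)$ does not vanish, by a height comparison showing that the monomial $e^{\ql_1+N\nu}$ cannot be cancelled by any contribution from $w\neq w_1$. You instead observe that the whole statement is an instance of Dedekind--Artin linear independence of the characters $\chi_w:y\mapsto e^{wy}$ with values in the field $K=Fr(\Z_\qs[Y])$, once one knows that these characters are pairwise distinct, i.e.\ that $W^v$ acts faithfully on $Y\supset Q^\vee$. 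Your argument is shorter and more conceptual, and it isolates precisely the structural input (faithfulness) that makes the lemma true; the paper's argument is more hands-on and has the minor advantage of exhibiting explicit separating test vectors, though this is not exploited later. Both are valid.
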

\begin{proof}
Indeed, let us consider $C=\displaystyle{\sum_{w\in F } \Bigl (\displaystyle{\sum_{\ql\in \cup_{\qm\in F_w} \qm-Q_+^\vee }}} c_{w,\ql}e^\ql\Bigr ) [w]\in Z_\qs(Y)[W^v]$ where $F$ is a finite subset of $W^v$ and $F_w$ finite subsets of $Y$ with some  $(w_1, 
\ql_1)\in F\times F_{w_1}$ such that $c_{w_1, \ql_1}\not=0$. Let us choose $\nu \in Y$ such that  for all $i\in I$,  $\qa_i(\qn)\geq1$ and $N=1+\displaystyle{\max_{\ql\in\cup F_w}}(\HT(\ql))+|\HT (\ql_1)| $, then $\phi_1(C)(e^{Nw_1^{-1}(\qn)})\not=0$. In fact, with such choices, in the calculus of $C*e^{Nw_1^{-1}(\qn)}$, we can see that for $w\not=w_1$, we  have 
$$ 
\HT(\ql+w(Nw_1^{-1}(\qn)))<\HT(\ql_1+w_1(Nw_1^{-1}(\qn)))
$$ and so no term in the sum can cancel the coefficient   $c_{w_1, \ql_1}$ of $e^{\ql_1+N\qn}$. 
\end{proof}

 \subsection{Actions of the  Bernstein-Lusztig-Hecke algebra $^{BL}{\mathcal H_{\Z_\qs}}$}\label{5.5} 
 
\subsubsection{An homomorphism of algebras} \label{5.5.00} 

We are  first going to define an action of $^{BL}\mathcal H_{\Z_\qs}$ on $\Z_\qs[Y]$. To do that,  we follow the idea in \cite{Ma03}. 
By the definition of the Bernstein-Lusztig-Hecke algebra, we have an isomorphism of $\Z_\qs-$modules  $^{BL}\mathcal H_{\Z_\qs}\cong \Z_\qs[Y] \otimes_{\Z_\qs}\mathcal H_{\Z_\qs} (W^v)$  given by $Z^\ql H_w\mapsto e^{-\ql}\otimes H_w$. 
If $M$ is a left $\mathcal H_{\Z_\qs} (W^v)-$module, we may consider the induced $^{BL}\mathcal H_{\Z_\qs}-$module structure on $^{BL}\mathcal H_{\Z_\qs} \otimes_{\mathcal H_{\Z_\qs}(W^v)} M \cong \Z_\qs[Y] \otimes_{\mathcal {\Z_\qs}}M, $ where the $\Z_\qs-$linear isomorphism is given by: 
$$
\sum_{\ql, w} {a_{\ql,w}} Z^\ql H_w\otimes m\mapsto \sum_{\ql, w}a_{\ql , w} e^{-\ql}\otimes H_w m,
$$ where $\ql \in Y$, $w\in W^v$ and $m\in M$  with only finitely many non-zero terms $a_{\ql,w}$. 
 
 By (4) in Proposition  \ref{5.2}, 
\begin{align*}
H_i.(Z^\ql H_w\otimes m) & =  (H_i*Z^\ql *H_w)\otimes m\\
& =  (Z^{r_i(\ql)}* H_i*H_w)\otimes m+b(\qs_i, \qs'_i; Z^{-\qa_i^\vee}) (Z^\ql-Z^{r_i(\ql)}) *H_w\otimes m\\
& =  Z^{r_i(\ql)}\otimes (H_i*H_w).m+b(\qs_i, \qs'_i; Z^{-\qa_i^\vee}) (Z^\ql-Z^{r_i(\ql)})\otimes H_w. m\, . 
\end{align*}
So in terms of $ \Z_\qs[Y] \otimes_{\Z_\qs}M$, changing $\lambda$ in $-\lambda$ above,
  $$(*) \, \, H_i.(e^\ql \otimes H_w. m)= e^{r_i(\ql)}\otimes (H_i*H_w).m+b(\qs_i, \qs'_i; e^{\qa_i^\vee}) (e^\ql-e^{r_i(\ql)})\otimes H_w. m\, .$$
 
The remark \ref{5.2} 2) and the conditions on the $\qs_i $ ensure that we define a structure of $\mathcal H_{\Z_\qs} (W^v)-$module on $M=\Z_\qs$ by setting    $H_i.1=\qs_i$ for all $i\in I$.  The construction  above gives us an action of $^{BL}\mathcal H_{\Z_\qs}$ on  $^{BL}\mathcal H_{\Z_\qs} \otimes_{\mathcal H_{\Z_\qs}(W^v)}\Z_\qs \cong \Z_\qs[Y]  $ seen as an $\mathcal H_{\Z_\qs} (W^v)-$module. 
The action of $H_i$ can then be written as:
 
  $$(*) \, \, H_i.(e^\ql \otimes 1)=\qs_i e^{r_i(\ql)}\otimes 1+b(\qs_i, \qs'_i; e^{\qa_i^\vee}) (e^\ql-e^{r_i(\ql)})\otimes  1\, .
  $$
And so, we can define, an algebra morphism
  
\begin{equation}\label{5.5.1}\varphi : {^{BL}\mathcal H_{\Z_\qs} }\to  {End}_{\Z_\qs}  (\Z_\qs [Y]) \end{equation}

\parni by
 \begin{align*}&\forall i\in I,\quad \varphi(H_i) (f)=\qs_i\, {^{r_i}\!f}+b(\qs_i, \qs'_i, e^{\qa_i^\vee}) (f-{^{r_i}\!f}) \\
&\forall \ql\in Y,\quad  \varphi(Z^\ql) (f)= e^{-\ql} f  & \qquad &\end{align*}
for $ f=\sum a_y e^{y}\in \Z_\qs[Y]$.

\smallskip
Using function $c$, we can rewrite $(*)$ in: 
\begin{align*}
(*) \, \, H_i.(e^\ql \otimes 1)&=c(\qs_i, \qs'_i; e^{\qa_i^\vee}) e^{r_i(\ql)}\otimes 1+b(\qs_i, \qs'_i; e^{\qa_i^\vee}) e^\ql \otimes  1\\
 & = c(\qa_i^\vee) e^{r_i(\ql)}\otimes 1+b(\qa_i^\vee) e^\ql \otimes  1\, .
\end{align*}

Now, thanks to the discussion in \ref{4.3.1}, for each $i\in I$ (resp. each $\ql\in Y$),  the element $h'_i: =c({\qa_i^\vee}) [r_i]+b({\qa_i^\vee}) [e]$ (resp. $e^{-\ql}$) lies in $  \Z_\qs(Y)[W^v]$, stabilizes $\Z_\qs[Y]$ for the action $\phi$, cf.(\ref{5.1.2b}),  and induces on it $\varphi(H_i)$ (resp. $\varphi(Z^\ql)$) cf.(\ref{5.5.1}). As $\phi_1$ is injective, cf.(\ref{5.1.4}),  we get that $(h'_i)_{i\in I} $ and $(e^{-\ql})_{\ql \in Y}$ satisfy the same relations as $ (H_i )_{i\in I}$ and $ (Z^\ql )_{\ql\in Y}$ of Proposition \ref{5.2}. So, by the Remark  \ref{5.2} 1),  we can define an algebra homomorphism by 

\begin{equation} \label{eq:5.5.1} \begin{matrix} \Psi :  { ^{BL}\mathcal H_{\Z_\qs}}& \to &\Z_\qs(Y)[W^v]\\
H_i&\mapsto& c({\qa_i^\vee}) [r_i]+b({\qa_i^\vee}) [e]\\
Z^\ql&\mapsto &e^{-\ql}\, .\end{matrix}\end{equation}

We are going to prove now that $\Psi$ is an injective map and so we will be able to identify $   { ^{BL}\mathcal H_{\Z_\qs}}$ and $ \Psi ( { ^{BL}\mathcal H_{\Z_\qs}})$ and consider the action of $ \Psi ( { ^{BL}\mathcal H_{\Z_\qs}})$ given by $\phi$ (\cf (\ref{5.1.3})) on $\Z_\qs(Y)$  as an action of the Bernstein-Lusztig-Hecke algebra. 

\subsubsection{An explicit formula for $\Psi  (H_w)$} \label{5.5.02}  

First we need to give an explicit formula for $\Psi  (H_w)$ with $w\in W^v$, we introduce new notation. 
For $w\in W^v$ given with a reduced decomposition $w=r_{i_1}r_{i_2}\ldots r_{i_n}$, we will consider for any subset $J$ of $\{1, \ldots, n\}$ and any $1\leq k\leq n$,
 \begin{equation}\label{eq:5.5.3}\begin{aligned} v_k^J:=&\displaystyle \prod_{\substack{1\leq p\leq k\\ p\notin J}} r_{i_p}\\
w_k^J:=&\displaystyle \prod_{\substack{1\leq p\leq k\\ p\notin J}}r_{i_p}\prod_{k+1\leq p\leq n} r_{i_p}\\
w^J:=&w_n^J=v_n^J\, .\end{aligned}\end{equation}

With {this} notation, we have

 $
 \displaystyle {\Psi  (H_w)={\prod_{k=1}^{k=n}\Bigl ( c({\qa_{i_k}^\vee}) [r_{i_k}]+b( {\qa_{i_k}^\vee}) [e]\Bigr )}=\sum_{J\subset [1,\ldots, n]} \Bigl (\prod_{k=1}^{k=n}\widetilde{d^J_{k}} ({\qa_{i_k}^\vee})\Bigr)},$

\parni where $\widetilde{d^J_{k}} ({\qa_{i_k}^\vee})=
\begin{cases}c({\qa_{i_k}^\vee})[r_{i_k}]& \quad \text { if }\,\,  k\notin J \\
b({\qa_{i_k}^\vee})[e] &\quad \text { if } \,\,k\in J \end{cases}\, $, and so 

\begin{equation} \label{eq:5.5.5}\Psi  (H_w)=\sum_{J\subset [1,\ldots, n]} \Biggl (\prod_{k=1}^{k=n}{d^J_{k}} \Bigl ( v^J_{k-1}({\qa_{i_k}^\vee})\Bigr )\Biggr) [w^J]\end{equation}
\begin{equation} \label{eq:5.5.6}\text  { with }\,\,{d^J_{k}} ({\qb^\vee})=
\begin{cases}c({\qb^\vee})& \, \text { if }\,\,  k\notin J \\
b({\qb^\vee}) &\, \text { if } \,\,k\in J\, . \end{cases}\end{equation}
 
\begin{rema*}
In Expression (\ref{eq:5.5.5}), the term corresponding to  $J=\emptyset $ is $\prod_{k=1}^{k=n}{c} ( v^\emptyset _{k-1}({\qa_{i_k}^\vee}) )[w]$. By (\ref{5.5.6}) and (\ref{5.5.7}), $\prod_{k=1}^{k=n}{c} ( v^\emptyset _{k-1}({\qa_{i_k}^\vee}) )\in \Z_\qs[[-Q_+^\vee]]$. 
As the  decomposition $w=r_{i_1}r_{i_2}\ldots r_{i_n}$ is reduced, so is the decomposition $v^\emptyset _{k-1}r_{i_k}=r_{i_1}r_{i_2}\ldots r_{i_k}$ and we know that $v^\emptyset _{k-1}({\qa_{i_k}^\vee})\in Q_+^\vee.$ 
Thus,  the coefficient of the term of height $0$ in the above product is $\displaystyle{\prod_{k=1}^{k=n}{\qs_{i_k}}}$.

\end{rema*}
\smallskip

\subsubsection{The action of ${ ^{BL}\mathcal H_{\Z_\qs}}$}\label{5.5.03} 

We can now prove that  $\Psi$ is an injective map. Consider an element $C=\sum_{w\in  F}\sum_{\ql\in F_w} k_{\ql,w} Z^\ql H_w$ of  ${^{BL}\mathcal H_{\Z_\qs} }\setminus \{0\}$ with $F$ is a finite subset of $W^v$ and for $w\in F$, $F_w$ a finite subset of $Y$ such that $k_{\ql,w}\in \Z_\qs\setminus\{0\}$ if $w\in F$ and $\ql\in F_w$. Let us choose $w_h$ an element of highest length of $F$ and $\ql_0$ of minimal height in $F_{w_h}$. Then we have, 
\begin{align*} \Psi (C)&=\Psi (\sum_{w\in  F}\sum_{\ql\in F_w} k_{\ql,w} Z^\ql H_w)\\
&= \sum_{w\in  F}\sum_{\ql\in F_w} k_{\ql,w} e^{-\ql}\Psi (H_w).\\
\end{align*}
By maximality of $\ell(w_h)$ and (\ref{eq:5.5.5}), the coefficient of $[w_h]$ in  $\Psi (C)$ (seen in $\Z_\qs[[Y,-Q_+^\vee]]$) only comes from the terms in $\Psi (H_{w_h})$ and more precisely those corresponding to $J=\emptyset $ in that decomposition. 
By the remark \ref{5.5.02}, this coefficient is the sum of  {$\biggl (\displaystyle{\prod_{k=1}^{k=n}{\qs_{i_k}}\biggr ) k_{\ql_0,w_h} e^{-\ql_0}}\not= 0$} and others terms in $e^{-\qm}$ with $\qm \in (F_{w_h}+Q_+^\vee)\setminus\{\ql_0\}$ all of height greater than that of $\ql_0$. So, $\Psi(C)\not= 0$.

 
\subsubsection*{Conclusion}\label{5.5.04}

Using (\ref{eq:5.5.1}) and the injectivity of $\Psi$, we now identify an element of ${ ^{BL}\mathcal H_{\Z_\qs}}$ and its image by $\Psi$ in $ \Z_\qs(Y)[W^v]$ and so consider, by (\ref{5.1.2b}),  the action of the Bernstein-Lusztig-Hecke algebra on $\Z_\qs (Y)$, called the Cherednik representation of the Bernstein-Lusztig-Hecke algebra, see \cite[Th. 2.3]{Che95}. 
We get also clearly a representation of ${ ^{BL}\mathcal H_{\Z'_\qs}}$ on $\Z'_\qs (Y)$.
In particular $T_i=\qs_i.H_i=\Psi(T_i)=c'(\qa_i^\vee)[r_i]+b'(\qa_i^\vee)[e]$ acts on $\Z'_\qs (Y)$ and $\Z_\qs (Y)$.

\par The element $H_i=\Psi(H_i)$, viewed as an operator on $\Z_\qs (Y)$, is often called a Demazure-Lusztig operator, see \eg \cite{Che95} or \cite{PaP17}. 

\subsection{An evaluation of $H_w=\Psi(H_w)$}\label{5.6} 

\parni {\bf1)} Using the previous identification, we have 
\begin{equation} \label{5.6.00} H_w=\sum_{J\subset [1,\ldots, n]} \biggl (\prod_{k=1}^{k=n}{ d^J_{k}} \Big ( v^J_{k-1}({\qa_{i_k}^\vee})\Big )\biggr) [w^J].\end{equation}
Since for any $w_0\in W^v$, each $d^J_{k} ( v^J_{k-1}({\qa_{i_k}^\vee}))\in \Z_\qs[[-w_0Q_+^\vee]]$ cf.(\ref{5.5.7a}), $ H_w$ is an element of $  \Z_\qs[[-w_0Q_+^\vee]][W^v]$. In fact, by (\ref{5.5.5}) and (\ref{eq:5.5.6}),  we can say that
\begin{equation} \label{5.6.01}d^J_{k} ( v^J_{k-1}({\qa_{i_k}^\vee}))\in e^{-v^J_{k-1}({\qa_{i_k}^\vee})}\Z_\qs[[-w_0Q_+^\vee]]\text{ whenever }k\in J \text{ and  }w_0^{-1}(v^J_{k-1}({\qa_{i_k}^\vee}))\in\! Q_+^\vee.\end{equation} 
So, 
\begin{equation}\label{5.6.0}  \displaystyle \prod_{k=1}^{k=n}{d^J_{k}} \Bigl ( v^J_{k-1}({\qa_{i_k}^\vee})\Bigr ) \in e^{-\sum_k {v^J_{k-1}({\qa_{i_k}^\vee})}}\Z_\qs[[-w_0Q_+^\vee]] =
 e^{w_0(w_0^{-1}(-\sum_k {v^J_{k-1}({\qa_{i_k}^\vee})))}}\Z_\qs[[-w_0Q_+^\vee]] \end{equation}
where the sum runs over the $k\in J$ such that  $w_0^{-1}(v^J_{k-1}({\qa_{i_k}^\vee}))\in Q_+^\vee$.

\smallskip
\parni {\bf2)}
To be able to write more precisely  this expression in  $\Z_\qs[[-w_0Q_+^\vee]]$, we are going to  prove the following proposition with arguments using galleries in $\A^v$.
The aim of the proposition is to give us some information on the "height" of $w_0^{-1}(-\sum_k v^J_{k-1}({\qa_{i_k}^\vee}))$ in relation with the difference in length of $w$ and  $w^J$.

\begin{prop*}  For $ r_{i_1}\ldots r_{i_n}$  a reduced decomposition of $w\in W^v$ and $J$ a subset of $[1,\ldots, n]$, with the notation of (\ref{eq:5.5.3}), 

$$2\HT \Biggl ( \displaystyle  \sum_{\substack{k\in J \\ w_0^{-1}v_{k-1}^J(\qa_{i_k})\in Q_+}} {w_0^{-1}\big(v^J_{k-1}({\qa_{i_k}^\vee})\big)}\Biggr )\geq\ell(w_0^{-1}w)- \ell(w_0^{-1}w^J).$$
\end{prop*}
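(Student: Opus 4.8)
The plan is to prove the inequality by induction on $|J|$, at each step peeling off the smallest ``fold'' of the subword. Write $u:=w_0^{-1}$ and, for $k\in J$, set $\gamma_k^\vee:=u\big(v_{k-1}^J(\qa_{i_k}^\vee)\big)$. The index set of the sum consists of those $k\in J$ for which $u\,v_{k-1}^J(\qa_{i_k})$ is a positive root; since $v_{k-1}^J(\qa_{i_k})$ is a real root, this is equivalent to $\gamma_k^\vee\in Q_+^\vee$, and then $\gamma_k^\vee$ is a positive coroot. So the argument of $\HT$ is a sum of positive coroots, and since $\HT$ is additive on $Q_+^\vee$ it equals $\sum_k\HT(\gamma_k^\vee)$. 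Denote the left-hand side by $L(w,J,u)$ and put $R(w,J,u):=\ell(uw)-\ell(uw^J)$; the base case $J=\emptyset$ reads $0\ge 0$.

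For the inductive step let $m=\min J$, $J'=J\setminus\{m\}$, and $\qb:=v_{m-1}^\emptyset(\qa_{i_m})$; since $m=\min J$ we have $v_{m-1}^J=v_{m-1}^\emptyset$, and by the Remark in \ref{5.5.02} the root $\qb$ is positive (its coroot $\qb^\vee=v_{m-1}^\emptyset(\qa_{i_m}^\vee)$ lies in $Q_+^\vee$), so the reflection $r_\qb$ is defined. A direct computation from (\ref{eq:5.5.3}) — which geometrically says that deleting the first fold reflects the tail of the folded gallery in the wall $\ker\qb$ — gives $v_k^{J'}=r_\qb\,v_k^J$ for all $k$ with $m\le k\le n$; in particular $w^{J'}=r_\qb\,w^J$. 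Setting $u':=u\,r_\qb$, it follows that for $k\in J'$ (so $k>m$) one has $u\,v_{k-1}^J(\qa_{i_k}^\vee)=u'\,v_{k-1}^{J'}(\qa_{i_k}^\vee)$, with matching positivity, so that the sum attached to $(w,J,u)$ is the sum attached to $(w,J',u')$ plus the extra term $u(\qb^\vee)$, present exactly when $u\qb$ is a positive root. Hence $L(w,J,u)=\varepsilon\,\HT\big(u(\qb^\vee)\big)+L(w,J',u')$, where $\varepsilon=1$ if $u\qb>0$ and $\varepsilon=0$ otherwise. On the other side $u'w^{J'}=u\,r_\qb\,r_\qb\,w^J=uw^J$, so $R(w,J',u')=\ell(u r_\qb w)-\ell(uw^J)$ and $R(w,J,u)=\big(\ell(uw)-\ell(u r_\qb w)\big)+R(w,J',u')$. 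Feeding in the inductive hypothesis $2L(w,J',u')\ge R(w,J',u')$, the whole statement reduces to
$$2\varepsilon\,\HT\big(u(\qb^\vee)\big)\ \ge\ \ell(uw)-\ell(u r_\qb w).$$

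To prove this, note $u r_\qb w=r_{u\qb}\,(uw)$ and $(uw)^{-1}(u\qb)=w^{-1}\qb$, which is a negative root because $\qb=v_{m-1}^\emptyset(\qa_{i_m})$ is one of the inversion roots of the reduced word $r_{i_1}\cdots r_{i_n}$ (so $w^{-1}\qb<0$). If $u\qb<0$, then $(uw)^{-1}(-u\qb)>0$, so left multiplication of $uw$ by the reflection $r_{u\qb}=r_{-u\qb}$ cannot shorten it, i.e. $\ell(uw)-\ell(u r_\qb w)\le 0$, and since $\varepsilon=0$ the inequality is clear. If $u\qb>0$, then $u(\qb^\vee)$ is a positive coroot, $\varepsilon=1$, and
$$\ell(uw)-\ell(u r_\qb w)=\ell(uw)-\ell\big(r_{u\qb}(uw)\big)\le \ell(r_{u\qb})\le 2\HT\big(u(\qb^\vee)\big)-1,$$
the last step being the elementary bound $\ell(r_{\gamma^\vee})\le 2\HT(\gamma^\vee)-1$, valid for every positive real coroot $\gamma^\vee$. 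That bound is in turn proved by induction on $\HT(\gamma^\vee)$: for $\HT=1$ it is $\ell(r_i)=1$; for $\HT(\gamma^\vee)\ge 2$ the coroot $\gamma^\vee$ is not simple, so — no positive real coroot being anti-dominant — there is $i\in I$ with $\qa_i(\gamma^\vee)\ge 1$, whence $r_i\gamma^\vee$ is again a positive coroot with $\HT(r_i\gamma^\vee)=\HT(\gamma^\vee)-\qa_i(\gamma^\vee)<\HT(\gamma^\vee)$, and from $r_{\gamma^\vee}=r_i\,r_{r_i\gamma^\vee}\,r_i$ one gets $\ell(r_{\gamma^\vee})\le 2+\ell(r_{r_i\gamma^\vee})\le 2\HT(\gamma^\vee)-1$.

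The step I expect to be the most delicate is the bookkeeping in the inductive reduction: checking carefully from (\ref{eq:5.5.3}) that removing $\min J$ conjugates all later partial products $v_k^J$ (and hence $w^J$ and all the coroots $v_{k-1}^J(\qa_{i_k}^\vee)$ for $k>m$) by the single reflection $r_\qb$, so that $L$ and $R$ split compatibly and the recursion on $|J|$ genuinely closes. The only other non-formal ingredient is that a positive real coroot of height $\ge 2$ admits a strict descent, which ultimately rests on the standard fact that a nonzero anti-dominant element of $Q_+^\vee$ with connected support is an imaginary coroot.
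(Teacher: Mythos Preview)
Your proof is correct. There is a small notational inconsistency---you declare $L(w,J,u)$ to be the full left-hand side (with the factor $2$), but then write $L(w,J,u)=\varepsilon\,\HT(u\qb^\vee)+L(w,J',u')$ and invoke the inductive hypothesis as $2L(w,J',u')\ge R(w,J',u')$; exactly one of these equations has the wrong factor. The intended identities are $L(w,J,u)=2\varepsilon\,\HT(u\qb^\vee)+L(w,J',u')$ and $L(w,J',u')\ge R(w,J',u')$, and with that reading everything checks.

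The paper reaches the same inequality by a different organization. Instead of inducting on $|J|$ and absorbing the peeled fold into a modified base point $w_0'=r_\qb w_0$, the paper keeps $w_0$ fixed and telescopes
\[
\ell(w_0^{-1}w)-\ell(w_0^{-1}w^J)=\sum_{k\in J}\bigl(\ell(w_0^{-1}w_{k-1}^J)-\ell(w_0^{-1}w_k^J)\bigr),
\]
interpreting each $w_k^J$ as the endpoint chamber of a folded gallery $\Gamma^k$. The sign dichotomy is then handled geometrically: when $w_0^{-1}v_{k-1}^J(\qa_{i_k})<0$, the paper argues via the minimality of the tail of $\Gamma^{k-1}$ that $w_0 C_f^v$ and $C_n^{k-1}$ lie on the same side of the folding wall, whence the reflection cannot shorten the distance; you obtain the same conclusion purely combinatorially from the strong exchange condition together with the fact that $\qb$ is an inversion root of the reduced word for $w$. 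Both routes finish with the same bound $\ell(r_\gamma)\le 2\HT(\gamma^\vee)$ (your version $2\HT(\gamma^\vee)-1$ is the sharp form). Your argument is thus a gallery-free reorganization of the paper's proof: it trades the geometric picture for a clean induction and the standard Coxeter fact $\ell(r_\gamma v)>\ell(v)\iff v^{-1}\gamma>0$, at the cost of the extra bookkeeping you flag (that removing $\min J$ conjugates all later partial products by $r_\qb$), which you have carried out correctly.
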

\begin{rema*}
This technical result has a nice geometric interpretation for $w_0=1$.\\
If a gallery  $\Bigl (C_0=C^v_f,C_1, C_2, \ldots , C_n\Bigr )$ of type $(i_1,\ldots, i_n)$ where the decomposition $r_{i_1}\ldots r_{i_n}$ is  reduced,  is not minimal, then $n-d(C_0,C_n)$  is at most twice the number of walls crossed by the gallery in a direction opposite to $C_0$, each wall $\ker{\qa}$ being counted with a multiplicity equal to the height of the coroot $\qa^\vee$.
\end{rema*} 

\begin{proof}
In the standard vectorial apartment $\A^v$, we consider the minimal gallery $\Gamma^0$ from $C^v_f$ to $wC^v_f$ of length  $n$ and of  type $( {i_1},\ldots ,{i_n})$. More explicitely,   $$\Gamma^0=\Bigl (C_0^0=C^v_f,C_1^0=r_{i_1}(C_0^0), C_2^0=r_{i_1}r_{i_2}(C_0^0), \ldots , C_n^0=r_{i_1}r_{i_2}\ldots r_{i_n} (C_0^0)\Bigr ).$$

We construct a finite sequence of galleries $(\Gamma^k)_{k\in [0,\ldots , n]}$ such that  $\Gamma^k=\Bigl (C_0^k,C_1^k, C_2^k, \ldots , C_n^k\Bigr )$ for $k\geq 1$ is a gallery from  $C_0^k:=C_0^0=C^v_f$ to $C_n^k:=w_k^J(C^v_f)$ of  type $( {i_1},\ldots ,{i_n})$ , as follows : 

\parni if $ k\notin J $, 
$\Gamma^{k}=\Gamma^{k-1}$, 

\parni  if $k\in J$,  $\Gamma^{k}$ is obtained from $\Gamma^{k-1}$ by a folding : 

for $p\leq k-1$, $C_p^{k}=C_p^{k-1},$  

for $p\geq k$, $C_p^{k}=r_{v_{k-1}^J(\qa_{i_k})}C_p^{k-1}$ $\bigr($in particular  $C_k^{k}= C_{k-1}^{k-1}=v_k^J(C_0^0)$ and  $C_n^{k}=w_k^J(C_0^0)\bigr)$.\\

Using the distance between two chambers, we have, for all ${k\in [0,\ldots , n]}$, 
$$\ell (w_0^{-1}w_k^J)=d(C_f^v, w_0^{-1}w_k^J(C_f^v))=d (w_0(C_f^v), w_k^J(C_f^v)) =d (w_0(C_f^v), C_n^k).$$

And so, 

\begin{align*} \ell (w_0^{-1}w)-\ell (w_0^{-1}w^J)&=d (w_0(C_f^v), C_n^0) -d (w_0(C_f^v), C_n^n)\\
& =\displaystyle {\sum_{k=1}^{k=n} d (w_0(C_f^v), C_n^{k-1}) -d (w_0(C_f^v), C_n^k)}\\
&=\displaystyle {\sum_{k\in J} d (w_0(C_f^v), C_n^{k-1}) -d (w_0(C_f^v), C_n^k)}\\
& =\displaystyle {\sum_{k\in J} d (C_f^v,  w_0^{-1}w_{k-1}^J(C_f^v)) -d (C_f^v,w_0^{-1} w_k^J(C_f^v))}\\
&=\displaystyle {\sum_{k\in J}  \ell (w_0^{-1}w_{k-1}^J)-\ell (w_0^{-1}w_k^J).}\end{align*}

If $k\in J$ and  $w_0^{-1}v_{k-1}^J(\qa_{i_k})\in Q_-$, then $v_{k-1}^J(\qa_{i_k})(w_0(C_f^v))<0$, so the chambers $w_0(C_f^v)$ and $C_n^{k-1}$ are on the same side of the wall $\ker({v_{k-1}^J(\qa_{i_k})})$,  as $v_{k-1}^J(\qa_{i_k})$ is positive on $C_{k-1}^{k-1} $ and negative on $C_{p}^{k-1} $ for $p\geq k$ (because the gallery $\Gamma^{k-1}$ is stretched between $C_{k-1}^{k-1} $ and  $C_{n}^{k-1} $). 
So, for $k\in J$ and  $w_0^{-1}v_{k-1}^J(\qa_{i_k})\in Q_-$,  \begin{align*}  \ell (w_0^{-1}w_{k-1}^J)-\ell (w_0^{-1}w_k^J))&= d (w_0(C_f^v), C_n^{k-1}) -d (w_0(C_f^v), C_n^k)\\
&=d (w_0(C_f^v), C_n^{k-1}) -d (w_0(C_f^v), r_{v_{k-1}^J(\qa_{i_k})}C_n^{k-1}) \leq0 \end{align*}

With this result, we obtain the upper bound 
$$ \ell (w_0^{-1}w)-\ell (w_0^{-1}w^J)\leq \displaystyle  \sum_{\substack{k\in J \\ w_0^{-1}v_{k-1}^J(\qa_{i_k})\in Q_+}} \ell (w_0^{-1}w_{k-1}^J)-\ell (w_0^{-1}w_k^J).$$

If $k\in J$ and   $w_0^{-1}v_{k-1}^J(\qa_{i_k})\in Q_+$, we get by the triangle inequality, 

\begin{align*}d (w_0(C_f^v), C_n^{k-1})-d (w_0(C_f^v),r_{v_{k-1}^J(\qa_{i_k})}C_n^{k-1})\!& =d (w_0(C_f^v), C_n^{k-1})-d ( r_{v_{k-1}^J(\qa_{i_k})}w_0(C_f^v),C_n^{k-1})\\
&\leq d (w_0(C_f^v),  r_{v_{k-1}^J(\qa_{i_k})}w_0(C_f^v))\, .\end{align*}
 
 And we can write finally \begin{align*} \ell (w_0^{-1}w_{k-1}^J)-\ell (w_0^{-1}w_k^J)=d (w_0C_f^v, C_n^{k-1}) -d (w_0C_f^v, r_{v_{k-1}^J(\qa_{i_k})}C_n^{k-1})& \leq d(w_0C_f^v,  r_{v_{k-1}^J(\qa_{i_k})}w_0C_f^v)\\
 &\leq d(C_f^v,  w_0^{-1}r_{v_{k-1}^J(\qa_{i_k})}w_0C_f^v)\\
 &\leq d(C_f^v, r_{w_0^{-1}v_{k-1}^J(\qa_{i_k})}C_f^v)\\
 &\leq \ell\big(r_{w_0^{-1}v_{k-1}^J(\qa_{i_k})}\big)\, .
 \end{align*}
 
We  use a little lemma (proven just later)   to conclude that 
$$\ell (w_0^{-1}w)-\ell (w_0^{-1}w^J)\leq 2 \HT \Biggr( \displaystyle  \sum_{\substack{k\in J \\ w_0^{-1}v_{k-1}^J(\qa_{i_k})\in Q_+}} {w_0^{-1}(v^J_{k-1}({\qa_{i_k}^\vee})})\Biggr)\, .$$
\end{proof}

\begin{lemm*}  If $\qa$ is a real positive root, then $\ell (r_\qa) \leq 2  \HT (\qa^\vee)$. 
\end{lemm*}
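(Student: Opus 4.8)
The plan is to obtain the bound directly, without galleries, from the classical description of the length in a Coxeter group together with the evaluation of the Weyl vector on $\qa^\vee$. Choose $\qr\in X$ with $\qr(\qa_i^\vee)=1$ for every $i\in I$ (such a weight exists since the family $(\qa_i^\vee)_{i\in I}$ is free, \cf \ref{1.1} and the Introduction), so that $\qr(\qb^\vee)=\HT(\qb^\vee)$ for any $\qb\in\QF$. Put $N(r_\qa)=\{\qb\in\QF^+\mid r_\qa(\qb)\in\QF^-\}$. I will use two standard facts valid for Kac--Moody root systems: $\#N(r_\qa)=\ell(r_\qa)$, and the Weyl vector identity $\qr-w(\qr)=\sum_{\qb\in\QF^+,\ w^{-1}(\qb)\in\QF^-}\qb$; specialized to the involution $w=r_\qa$ the latter reads
$$
\qr-r_\qa(\qr)=\sum_{\qb\in N(r_\qa)}\qb .
$$
(Here $\QF$ denotes the real roots; since $\QF^+_{im}$ is $W^v$-stable, no positive imaginary root occurs in the general sum, so $N(r_\qa)$ genuinely consists of real roots and the length formula applies verbatim.)

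Then I would evaluate both sides of this identity on $\qa^\vee$. On the left, $r_\qa(\qr)=\qr-\qr(\qa^\vee)\qa=\qr-\HT(\qa^\vee)\,\qa$, so $\qr-r_\qa(\qr)=\HT(\qa^\vee)\,\qa$, whose value on $\qa^\vee$ is $\HT(\qa^\vee)\,\qa(\qa^\vee)=2\,\HT(\qa^\vee)$. On the right one gets $\sum_{\qb\in N(r_\qa)}\qb(\qa^\vee)$, and the point is that each term is $\geq 1$: for $\qb\in N(r_\qa)$ we have $r_\qa(\qb)=\qb-\qb(\qa^\vee)\qa\in\QF^-$, so necessarily $\qb(\qa^\vee)>0$ --- otherwise $r_\qa(\qb)$ would be the sum of $\qb\in\QF^+$ and a nonnegative multiple of $\qa\in\QF^+$, hence a nonzero element of $Q^+$, \ie it would lie in $\QF^+$, a contradiction --- and being an integer it is $\geq1$. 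Comparing the two evaluations yields
$$
2\,\HT(\qa^\vee)=\sum_{\qb\in N(r_\qa)}\qb(\qa^\vee)\ \geq\ \#N(r_\qa)=\ell(r_\qa),
$$
which is the claim.

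The computation itself is entirely routine; the only thing one has to be slightly careful about is invoking the two classical facts about $N(w)$ in the Kac--Moody generality (both are standard, see \eg Kac's or Kumar's book), and noticing that the $W^v$-stability of $\QF^+_{im}$ assumed in \ref{1.1} is exactly what guarantees $N(r_\qa)\subset\QF^+$ and hence $\ell(r_\qa)=\#N(r_\qa)$. I expect no real obstacle here. If desired, one can sharpen the inequality to $\ell(r_\qa)\leq 2\,\HT(\qa^\vee)-1$ by observing that the fixed-point-free involution $\qb\mapsto\qb(\qa^\vee)\qa-\qb$ of $N(r_\qa)\setminus\{\qa\}$ splits the remaining roots into $(\ell(r_\qa)-1)/2$ pairs, each contributing $\geq2$ to the sum; but the stated bound does not need this refinement.
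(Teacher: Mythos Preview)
Your proof is correct and takes a genuinely different route from the paper's. The paper's argument is constructive: it produces a sequence $(j_1,\ldots,j_q)$ with $q\leq\HT(\qa^\vee)$ by repeatedly choosing a simple reflection that strictly lowers the height of $\qa^\vee$, until one lands on a simple coroot; this exhibits $r_\qa=r_{j_1}\cdots r_{j_{q-1}}r_{j_q}r_{j_{q-1}}\cdots r_{j_1}$ as a word of length $2q-1\leq 2\HT(\qa^\vee)-1$. Your argument is instead a counting one: you identify $\ell(r_\qa)$ with $\#N(r_\qa)$, write the Weyl-vector identity $\qr-r_\qa(\qr)=\sum_{\qb\in N(r_\qa)}\qb$, and evaluate both sides on $\qa^\vee$, using that each $\qb(\qa^\vee)\geq1$. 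The paper's route is more elementary (it needs nothing beyond the height-lowering property of simple reflections) and yields the slightly sharper bound $2\HT(\qa^\vee)-1$ for free; your route is cleaner and more conceptual, at the price of invoking the two standard Coxeter/Kac--Moody facts about $N(w)$, and you recover the sharper bound only via the extra pairing observation in your last sentence. Either is perfectly adequate here, since the proposition that uses this lemma only needs the factor-of-two bound.
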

\begin{proof}
It  is possible to find $(j_1,j_2, \ldots j_q)$ such that $r_{j_{q-1}}\ldots r_{j_1}(\qa^\vee)=\qa_{j_q}^\vee $ with for all $2\leq  h\leq q-1$, $\qa_{j_h}(r_{j_{h-1}}\ldots r_{j_1}(\qa^\vee) ) > 0$ {and $\qa_{j_1}(\qa^\vee)>0$}. 
We have $r_{j_{h-1}}\ldots r_{j_1}(\qa^\vee)-r_{j_{h}} r_{j_{h-1}}\ldots r_{j_1}(\qa^\vee)\in Q^\vee_+\setminus\{0\}$ so obviously $q\leq \text{\rm  ht} (\qa^\vee) $. 
Then $r_\qa=r_{j_1} \ldots r_{j_{q-1}}r_{j_q}r_{j_{q-1}}\ldots r_{j_1}$ so $\ell(r_\qa ) \leq 2q-1\leq 2\text{\rm  ht} (\qa^\vee) $. 
\end{proof}

\smallskip
\parni {\bf3)}
Using Proposition \ref{5.6}.2) and (\ref{5.6.0}), we have in $\Z_\qs[[Y,-w_0Q_+^\vee]]$, 
\begin{equation}\label{5.6.01}\prod_{k=1}^{k=n}{d^J_{k}} \Bigl ( v^J_{k-1}({\qa_{i_k}^\vee})\Bigr )=:\sum_{\substack{\ql\in Q_+^\vee\\2\HT(\ql)\geq \ell(w_0^{-1}w)-\ell(w_0^{-1}w^J )}}d^w_{J,\ql,w_0} e^{-w_0\ql}\quad \text {with  some } d^w_{J,\ql,w_0}\in \Z_\qs.
\end{equation}
We can now, rewrite the expression (\ref{5.6.00}) in $\Z_\qs[[Y, -w_0Q_+^\vee ]]$, 

$$ H_w=\sum_{J\subset [1,\ldots, n]}\sum_{\substack{\ql\in Q_+^\vee\\2\HT(\ql)\geq \ell(w_0^{-1}w)-\ell(w_0^{-1}w^J)}}d^w_{J,\ql,w_0} e^{-w_0\ql}[w^J  ] 
.$$

 With the notation $\leq_{_{B}}$ for the Bruhat order,  if $v\leq_{_B} w$ there exists $J$ such that $w^J=v$, so we have the equality
 \begin{equation}\label{5.6.1} H_w=:\sum_{v\leq_{_B} w}\sum_{\ql\in Q_+^\vee}k^w_{v,\ql,w_0} e^{-w_0\ql}[v]\end{equation} for some $k^w_{v,\ql,w_0}\in \Z_\qs$ and $k^w_{v,\ql,w_0}=0 \text { when } 2\HT(\ql)< \ell(w_0^{-1}w)-\ell(w_0^{-1}v)$.
In the last expression, we have  written $ H_w\in \Z_\qs[[-w_0Q_+^\vee]][W^v]$ by grouping the terms $J$  that correspond to the same element $v=w^J$ with $v\leq_{_B} w$. 

In particular, if we consider the case $w_0=e$ and  $v=e$, the coefficient of $[e]$ in the sum (\ref{5.6.1}) is $\sum_{\substack{J\subset [1,\ldots, n]\\w^J=e}}\sum_{\substack{\ql\in Q_+^\vee\\2\HT(\ql)\geq \ell(w)}}d^w_{J,\ql,e} e^{-\ql}.$
And more specifically, 
\begin{equation} \label{5.6.1a} \text{the coefficient (in }\Z_\qs \text {) of }e^{0}[e] \text{ is  } k^w_{e,0,e}=0 \text { if } w\not=e . \text{ When } w=e,  \text{ one has } H_e=[e].\end{equation}


\section{Symmetrizers and Cherednik's identity}\label{s5b} 


The first aim of this  section is to give a sense to the $T-$symmetrizer $P_\qs=\sum_{w\in W^v} T_w=\sum_{w\in W^v} \qs_w H_w$ as an element of $ \Z_\qs ((Y))[[W^v]]$, where  $\Z_\qs ((Y))$ is a completion of the algebra $\Z_\qs (Y)$ defined below and $ \Z_\qs ((Y))[[W^v]]$ is the $\Z_\qs ((Y))-$module of all formal expressions $\sum_{w\in W^v}a_w[w] $ with $a_w\in \Z_\qs((Y))$. 
We also introduce in $ \Z_\qs ((Y))[[W^v]]$ a $\QD-$symmetrizer $\SHJ_\qs$ and prove its proportionality with $P_\qs$.

\subsection{Definition of the $T-$symmetrizer $P_\qs$} \label{5.7.01} 
For any $N\in \N$, we set $W_N^v=\{w\in W^v\,|\, \ell(w)\leq N\}$ and we consider the element of ${ ^{BL}\mathcal H_{\Z_\qs}}$,  \begin{equation} \label{5.6.1c}P_\qs^N=\sum_{w\in W^v_N} \qs_wH_w . \end{equation}
And so by (\ref{5.6.1}), we have, in $\Z_\qs[[-w_0Q_+^\vee]][W^v]$,

 \begin{align*}P_\qs^N=\sum_{w\in W^v_N} \qs_wH_w &=\sum_{w\in W^v_N} \qs_w\sum_{v\leq_{_{B}} w}\sum_{\ql\in Q_+^\vee}k^w_{v,\ql,w_0} e^{-w_0\ql}[v]\\
&=\sum_{v\in W^v_N} \sum_{\substack{w\in W^v_N\\ v\leq_{_{B}} w}}\qs_w\sum_{\ql\in Q_+^\vee}k^w_{v,\ql,w_0} e^{-w_0\ql}[v]
\end{align*}

And so, we have 

\begin{equation}\label{5.6.2} P_\qs^N=\sum_{v\in W^v_N} \sum_{\ql \in Q_+^\vee} C^N_{v,\ql; w_0}  e^{-w_0\ql}[v]\end{equation} with 
\begin{equation}\label{5.6.2a}C^N_{v,\ql; w_0}= \displaystyle\sum_{w\in W^v_N\,|\,  v\leq_{_{B}} w}\qs_w k^w_{v, \ql,w_0} .\end{equation}

Now, by (\ref{5.6.1}),  we can say that, for given $w_0\in W^v$, $v\in W^v$ and $\ql \in Q_+^\vee $,   the elements $w\in W^v$ which have a contribution to the coefficient  $C^N_{v,\ql; w_0}$  (for $N$ arbitrary large) are such that $\ell (w_0^{-1}w)-\ell (w_0^{-1}v)\leq 2 \HT(\ql) $. Such a  $w$ is in $w_0W^v_{\ 2\HT(\ql) +\ell(w_0^{-1} v)}$ which is a  finite set. Therefore, the coefficient $C^N_{v,\ql; w_0}$ is independent of $N$ for $N$ large enough.
Based on the formula (\ref{5.6.2}),  we can now consider, for any $w_0\in W^v$,
\begin{equation}\label{5.6.3}P_\qs=\sum_{w\in W^v} \qs_wH_w=\sum_{v\in W^v} \sum_{\ql \in Q_+^\vee} C_{v,\ql; w_0}  e^{-w_0\ql}[v]\text{ with }C_{v,\ql; w_0}=\lim\limits_{N\to+\infty}C^N_{v,\ql; w_0}. \end{equation}

\begin{remas*}1) Let us notice that, using (\ref{5.6.1a}) and equality $P_\qs=\sum_{w\in W^v} \qs_wH_w$, we find that  $C_{e,0;e}$ is $1$, as $\qs_eH_e$ is the only term which has a contribution to the coefficient of $e^0[e]$. 

2) For each $N$, and each $w_0\in W^v$,  $P_\qs^N$ is  an element of {$\Z_\qs(Y)[W^v]$} but the coefficient of $[v]$ in $P_\qs$,  $ \sum_{\ql \in Q_+^\vee} C_{v,\ql; w_0}  e^{-w_0\ql}$, is perhaps not any more in $\Z_\qs(Y)\subset Fr(\Z_\qs [Y])$. 

\medskip
We need to consider the completion $\Z_\qs ((Y))$ of $\Z_\qs(Y)=Fr (\Z_\qs[Y])) \cap (\cap_{w\in W^v}  \Z_\qs[[Y,-wQ^\vee_+]])$ where a sequence $\phi_n$ is said to converge if it converges in each $\Z_\qs[[Y,-wQ^\vee_+]]$. We still can consider the action of $W^v$ on  $\Z_\qs ((Y))$. 
\end{remas*} 

With this definition, \begin{equation}\label{5.6.4} P_\qs\in \Z_\qs ((Y))[[W^v]] \text{ and can be written } P_\qs=:\sum_{v\in W^v}C_v[v]\end{equation}
 \begin{equation}\label{5.6.4a} \text{ with } C_v:= \sum_{\ql \in Q_+^\vee} C_{v,\ql; w_0}  e^{-w_0\ql}\in  \Z_\qs ((Y)).\end{equation}
This result is the generalization of \cite[Lemma 2.19]{CheM13}, see also \cite[7.3.13]{BrKP15}. In particular the fact that we can use $\Z_\qs$, and do not have to consider an appropriate completion $\hat{\Z}_\qs$, is a translation of $P_\qs $ being "$v-$finite" (in the language of \cite{BrKP15}). Proposition \ref{5.6}.2) above is the key step for the generalization of this result of Cherednik and Ma \cite{CheM13}.

In the case of a split symmetrizable Kac-Moody group, the same result was obtained independently by Patnaik and Pusk\'as in \cite[Th. 3.1.8]{PaP17}. They get this result, except $v-$finiteness by the same methods as in \cite{CheM13}, and then deduce the $v-$finiteness from \ref{5.6.9} below and a result of Viswanath \cite{Vi08}.

\subsection{$W^v-$invariance of $P_\qs$}\label{5.7.02}

We must pay attention that $ \Z_\qs ((Y))[[W^v]]$ is not an algebra. But if $f \in \Z_\qs ((Y))[[W^v]]$ and  if  $g,h$ are in $ \Z_\qs ((Y))[W^v]$  the products $gf$, $fh$ are  well defined in $ \Z_\qs ((Y))[[W^v]]$. One just uses the action of $W^v$ on $\Z_\qs((Y))$ and the formulas $[w] k=(^wk)[w]$, $[w][w']=[ww']$ for $w,w'\in W^v$ and $k\in \Z_\qs((Y))$.

\begin{lemm*}  

(i) For all $i\in I$,  $ H_iP_\qs=\qs_iP_\qs\quad \text {and }\quad  P_\qs H_i=\qs_iP_\qs$.

(ii) With the notation introduced in (\ref{5.6.4}) and (\ref{5.6.4a})  with $w_0=e$, let us consider $\Gamma := C_e$ (which is invertible in $\Z_\qs[[-Q_+^\vee]]$ and in $\Z_\qs((Y))$). Then $C_v={^v\QG}$, for any $v\in W^v$.

 In $ \Z_\qs ((Y))[[W^v]]$, $P_\qs$ can be written $$ P_\qs=\sum_{v\in W^v}(^{v}\Gamma )[v] =\sum_{v\in W^v}[v]\Gamma\quad  \text{  with }\Gamma \in  \Z_\qs ((Y)). $$

\end{lemm*}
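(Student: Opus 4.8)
The plan is to prove (i) first and then derive (ii) as a formal consequence. For (i), I would start from the defining relation $H_i\ast H_w=H_{r_iw}$ when $\ell(r_iw)>\ell(w)$ and $H_i\ast H_w=(\qs_i-\qs_i^{-1})H_w+H_{r_iw}$ when $\ell(r_iw)<\ell(w)$. Split $W^v$ into the pairs $\{w,r_iw\}$; for each such pair, assuming $\ell(r_iw)>\ell(w)$, one computes
$$
H_i\ast(\qs_wH_w+\qs_{r_iw}H_{r_iw})=\qs_w H_{r_iw}+\qs_{r_iw}\big((\qs_i-\qs_i^{-1})H_{r_iw}+H_w\big).
$$
Using $\qs_{r_iw}=\qs_i\qs_w$ this equals $\qs_i\qs_w H_w+\qs_i\qs_w(\qs_i-\qs_i^{-1}+\qs_i^{-1})H_{r_iw}$ — wait, let me be careful: the coefficient of $H_{r_iw}$ is $\qs_w+\qs_{r_iw}(\qs_i-\qs_i^{-1})=\qs_w+\qs_i\qs_w\qs_i-\qs_w=\qs_i^2\qs_w$, and of $H_w$ is $\qs_{r_iw}=\qs_i\qs_w$; so the total is $\qs_i(\qs_w H_w+\qs_i\qs_w H_{r_iw})=\qs_i(\qs_wH_w+\qs_{r_iw}H_{r_iw})$, which is $\qs_i$ times the pair's contribution to $P_\qs$. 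Summing over all pairs gives $H_iP_\qs=\qs_iP_\qs$. The subtlety: $P_\qs$ is only an element of $\Z_\qs((Y))[[W^v]]$, not of ${}^{BL}\mathcal H_{\Z_\qs}$, so this pair-by-pair reorganization must be justified at the level of the completed module — but left multiplication by $H_i\in\Z_\qs(Y)[W^v]$ is well defined on $\Z_\qs((Y))[[W^v]]$ by \ref{5.7.02}, and the partial sums $H_iP_\qs^N$ converge coefficientwise, so one passes to the limit. The identity $P_\qs H_i=\qs_iP_\qs$ is the mirror computation using $H_w\ast H_i$, again pairing $\{w,wr_i\}$.

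For (ii), I would use the explicit shape $P_\qs=\sum_{v\in W^v}C_v[v]$ from (\ref{5.6.4}), with $\QG:=C_e\in\Z_\qs((Y))$. First, invertibility of $\QG$: by Remark 1 of \ref{5.7.01}, $C_{e,0;e}=1$, so $\QG=1+(\text{higher terms in }-Q_+^\vee)$, hence $\QG$ is invertible in $\Z_\qs[[-Q_+^\vee]]$ (geometric series) and therefore in $\Z_\qs((Y))$. Next I apply the relation $[r_i]P_\qs=?$: from $\Psi(H_i)=c(\qa_i^\vee)[r_i]+b(\qa_i^\vee)[e]$, the identity $H_iP_\qs=\qs_iP_\qs$ reads
$$
\big(c(\qa_i^\vee)[r_i]+b(\qa_i^\vee)\big)P_\qs=\qs_i P_\qs,
$$
so $c(\qa_i^\vee)\,[r_i]P_\qs=(\qs_i-b(\qa_i^\vee))P_\qs=c(\qa_i^\vee)P_\qs$ using $c=\qs_i-b$; since $c(\qa_i^\vee)$ is a nonzero element of a domain-like completion (it is $\qs_i+\text{l.o.t.}$ by (\ref{5.5.6})), one cancels it to get $[r_i]P_\qs=P_\qs$. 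Reading off the coefficient of $[v]$ on both sides of $[r_i]\sum_w C_w[w]=\sum_w ({}^{r_i}C_w)[r_iw]=\sum_v C_v[v]$ gives $C_v={}^{r_i}C_{r_iv}$ for all $i,v$. Iterating along a reduced word for $v$ yields $C_v={}^v C_e={}^v\QG$, as claimed. Finally $\sum_v({}^v\QG)[v]=\sum_v[v]\QG$ because $[v]\QG=({}^v\QG)[v]$ by the commutation rule in \ref{5.7.02}.

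\textbf{Main obstacle.} The genuinely delicate point is the cancellation step "$c(\qa_i^\vee)\,X=c(\qa_i^\vee)\,Y\implies X=Y$" inside $\Z_\qs((Y))[[W^v]]$: one must know that multiplication by $c(\qa_i^\vee)$ is injective on the relevant completion. This holds because, working in $\Z_\qs[[Y,-w_0Q_+^\vee]]$ for a fixed $w_0$, $c(\qa_i^\vee)$ has a "leading term" $\qs_i$ with respect to the appropriate filtration by $\HT$ (by (\ref{5.5.6}) and (\ref{5.5.7})), so it is not a zero divisor — but this needs to be stated cleanly, possibly choosing $w_0$ so that $-w_0Q_+^\vee$ makes $c(\qa_i^\vee)$ a unit times a power series with constant term a unit, or simply invoking that $\Z_\qs[[Y,-wQ_+^\vee]]$ embeds in a field of formal series. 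Everything else is bookkeeping: tracking that the infinite sums defining $P_\qs$ genuinely converge coefficientwise and that left/right multiplication by the finitely-supported elements $H_i$ is continuous for this notion of convergence, both of which are guaranteed by \ref{5.7.01}–\ref{5.7.02}.
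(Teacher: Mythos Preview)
Your proposal is correct and follows essentially the same route as the paper: pair up $\{w,r_iw\}$ to prove (i), then use $H_i=c(\qa_i^\vee)[r_i]+b(\qa_i^\vee)[e]$ together with $c+b=\qs_i$ to deduce $[r_i]P_\qs=P_\qs$ and read off $C_v={}^{r_i}C_{r_iv}$, hence $C_v={}^v\QG$. You are in fact more explicit than the paper on the one point you flag as delicate: the paper simply asserts that $H_iP_\qs=\qs_iP_\qs$ ``shows that $[r_i]P_\qs=P_\qs$'' without commenting on the cancellation of $c(\qa_i^\vee)$, whereas you correctly isolate and justify it via the leading-term argument in $\Z_\qs[[Y,-w_0Q_+^\vee]]$.
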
 
\begin {proof}
(i) Using the definition of $P_\qs$ and Proposition \ref{5.2}, we can write  
 \begin{align*} H_iP_\qs=\sum_{w\in W^v} \qs_wH_iH_w &=\sum_{\substack{w\in W^v\\\ell(r_iw)>\ell(w)}} \qs_w H_{r_iw}+\sum_{\substack{w\in W^v\\\ell(r_iw)<\ell(w)}} (\qs_w H_{r_iw}+(\qs_i-\qs_i^{-1})\qs_wH_w)\\
&=\sum_{\substack{w\in W^v\\\ell(r_iw)<\ell(w)}}\qs_i^{-1} \qs_w H_{w}+\sum_{\substack{w\in W^v\\ \ell(r_iw)<\ell(w)}} (\qs_w H_{r_iw}+(\qs_i-\qs_i^{-1})\qs_wH_w)\\
&=\sum_{\substack{w\in W^v\\ \ell(r_iw)<\ell(w)}} \qs_w H_{r_iw}+\sum_{\substack{w\in W^v\\ \ell(r_iw)<\ell(w)}} \qs_i\qs_wH_w\\
&=\sum_{\substack{w\in W^v\\ \ell(r_iw)>\ell(w)}} \qs_i\qs_w H_{w}+\sum_{\substack{w\in W^v\\ \ell(r_iw)<\ell(w)}} \qs_i\qs_wH_w\\
&=\qs_i P_\qs\end{align*}
 (To be more precise, the reader can consider in the equalities above $P_\qs^N$ instead of $P_\qs$ in order to stay in $^{BL}\mathcal H_{\Z_\qs}$ for using formula of Proposition \ref{5.2}  and obtain (i) when N tends to infinity.)

The second equality is obtained in the same way, using the remark 2) of  \ref{5.2}. 

\medskip
(ii) In $ \Z_\qs((Y))[W^v]$, we know that $H_i= c(\qa_i^\vee) [r_i]+b(\qa_i^\vee) [e]$ . Moreover,  $c(\qa_i^\vee) +b(\qa_i^\vee)=\qs_i$ by (\ref{5.5.2}). So, the equality  $H_iP_\qs =\qs_i P_\qs$ shows that $[r_i]P_\qs=P_\qs$ and we can write in $ \Z((Y))[[W^v]]$: $$ 0= ( [r_i]- [e])P_\qs 
= \sum_{v\in W^v} ( (^{r_i}C_v)[r_iv]-C_v[v]) .$$
Hence $^{r_i}C_v=C_{r_iv}$ for any $v\in W^v$ and any $i\in I$. We deduce from that $C_v=^v\!C_e$, and so  $$P_\qs=\sum_{v\in W^v}C_v[v]=\sum_{v\in W^v}[v]C_e=\sum_{v\in W^v}[v]\Gamma .$$

By the first remark of \ref{5.7.01}.1), $\Gamma= C_e= \sum_{\ql \in Q_+^\vee} C_{e,\ql; e}  e^{-w_0\ql}=1+ \sum_{\ql \in Q_+^\vee\setminus\{0\}} C_{e,\ql; e}  e^{-w_0\ql}$ is an invertible element in {$\Z_\qs[[-w_0Q_+^\vee]]$ for any $w_0$, hence in $\Z_\qs((Y))$}.  
\end{proof} 

\subsection{Definition of the $\QD-$symmetrizer $\SHJ_\qs$} \label{5.7.03} 

For $\qa\in \Phi_+$, we know by (\ref{5.5.7}) that $\QD_\qa:=\qs_\qa c(-\qa^\vee)=c'(-\qa^\vee)$ lies in $1+ e^{-\qa^\vee}\Z'_\qs [[-Q_+^\vee]]$. 
So we can consider the infinite product 
\begin{equation}\label{5.6.6} \Delta = \prod_{\qa\in \Phi_+} c'(-\qa^\vee) \in \Z'_\qs[[-Q_+^\vee]] \text { which is  invertible in } \Z'_\qs[[-Q_+^\vee]] . \end{equation}
Moreover, for any $w_0\in W^v$,

-  if $w_0^{-1}(\qa )\in \Phi_+$,  $  c'(-w_0^{-1}(\qa^\vee))-1\in e^{-w_0^{-1}(\qa^\vee)}\Z_\qs [[-Q_+^\vee]]$; 

- else  $w_0^{-1}(\qa )\in \Phi_-$,  $ c'(-w_0^{-1}(\qa^\vee))-\qs_\qa^2\in e^{w_0^{-1}(\qa^\vee)}\Z_\qs [[-Q_+^\vee]]$ by (\ref{5.5.6}). 

\smallskip
So, we can also consider $ \Delta$ as an invertible element of $ \Z_\qs[[-w_0(Q_+^\vee)]]$, but not of $ \Z'_\qs[[-w_0Q_+^\vee]]$. Finally, 
\begin{equation}\label{5.6.7} \Delta = \prod_{\qa\in \Phi_+} c'(-\qa^\vee) \in \Z_\qs((Y)) \text { and  is  invertible. } \end{equation}
It may be interesting to notice that 
\begin{equation}\label{5.6.8} c'(-\qa_i^\vee).(^{r_i}\Delta)=c'(\qa_i^\vee).\Delta.\end{equation}

\par Now the $\QD-$symmetrizer is the following formal sum $\SHJ_\qs:=\sum _{w\in W^v}{^{w}\!\Delta} \, [w]\in \Z_\qs ((Y))[[W^v]]$.

\subsection{The Cherednik's identity}\label {5.6.9}

\begin{prop*}  In $\Z_\qs ((Y))[[W^v]]$, we have the equality
 $$ P_\qs=\sum_{w\in W^v} \qs_wH_w=\mathfrak{m}_\qs\!\SHJ_\qs=\ (\Gamma \Delta^{-1})\!\sum _{w\in W^v}{^{w}\!\Delta} \,\, [w]$$ 
and  $\mathfrak{m}_\qs=\Gamma \Delta^{-1}\in \Z_\qs((Y))$ is invertible and $ W^v-$invariant.
\end{prop*}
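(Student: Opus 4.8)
The goal is to prove the Cherednik identity $P_\qs = \mathfrak{m}_\qs \SHJ_\qs$ with $\mathfrak{m}_\qs = \Gamma\Delta^{-1}$ invertible and $W^v$-invariant. I would organize the argument in three steps: first establish that $\SHJ_\qs$ satisfies the same kind of one-sided invariance under the $H_i$'s (or equivalently under the $[r_i]$'s) that $P_\qs$ does; second, use the rank-one structure of $\Z_\qs((Y))[[W^v]]$ as a "free module" over its coefficient of $[e]$ to conclude the proportionality; third, check the invertibility and $W^v$-invariance of the proportionality constant.

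\textbf{Step 1: $\SHJ_\qs$ is $[r_i]$-stable.} By Lemma \ref{5.7.02}(i)-(ii), the identity $H_i P_\qs = \qs_i P_\qs$ is equivalent to $[r_i] P_\qs = P_\qs$ for every $i\in I$, and this rewrites as $^{r_i}C_v = C_{r_i v}$ for all $v$, which forces $P_\qs = \sum_v (^v\Gamma)[v]$. I would show $\SHJ_\qs$ satisfies the exact same relation: by definition $\SHJ_\qs = \sum_{w\in W^v}{}^w\!\Delta\,[w]$, so its coefficient of $[v]$ is $^v\!\Delta$, and trivially $^{r_i}({}^v\!\Delta) = {}^{r_i v}\!\Delta$. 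Hence $[r_i]\SHJ_\qs = \SHJ_\qs$ for every $i$, so also $H_i\SHJ_\qs = \qs_i\SHJ_\qs$ (using $c(\qa_i^\vee)+b(\qa_i^\vee)=\qs_i$ from \eqref{5.5.2}). In particular $\SHJ_\qs$ lies in the same "left-$W^v$-invariant" part of $\Z_\qs((Y))[[W^v]]$ as $P_\qs$.

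\textbf{Step 2: proportionality.} Both $P_\qs$ and $\SHJ_\qs$ have the shape $\sum_{v\in W^v}({}^v a)[v]$ with $a\in\Z_\qs((Y))$: for $P_\qs$, $a = \Gamma = C_e$, invertible by Lemma \ref{5.7.02}(ii); for $\SHJ_\qs$, $a = \Delta = \prod_{\qa\in\Phi_+} c'(-\qa^\vee)$, invertible by \eqref{5.6.7}. Set $\mathfrak{m}_\qs := \Gamma\Delta^{-1}\in\Z_\qs((Y))$, invertible. I claim $\mathfrak{m}_\qs\SHJ_\qs = P_\qs$. The products here make sense: $\SHJ_\qs\in\Z_\qs((Y))[[W^v]]$ and $\mathfrak{m}_\qs\in\Z_\qs((Y))$, so by the opening remarks of \ref{5.7.02} the product $\mathfrak{m}_\qs\cdot\SHJ_\qs$ (a scalar times a formal sum) is well defined in $\Z_\qs((Y))[[W^v]]$. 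But one must be careful: $\mathfrak{m}_\qs\SHJ_\qs = \sum_v \mathfrak{m}_\qs({}^v\!\Delta)[v]$, and I want this to equal $\sum_v ({}^v\Gamma)[v]$, i.e. I need $\mathfrak{m}_\qs\cdot{}^v\!\Delta = {}^v\Gamma$ for every $v$. For $v=e$ this is the definition of $\mathfrak{m}_\qs$. For general $v$ this amounts to $\mathfrak{m}_\qs$ being $W^v$-invariant — so proportionality and invariance must be proved together. The route is: from $[r_i]P_\qs = P_\qs$ and $[r_i]\SHJ_\qs = \SHJ_\qs$, write $P_\qs = \mathfrak{m}\SHJ_\qs$ as an \emph{ansatz} $\mathfrak{m}\in\Z_\qs((Y))$ defined by matching the $[e]$-coefficient (so $\mathfrak{m}=\Gamma\Delta^{-1}$), then apply $[r_i]$ to both sides: $P_\qs = [r_i]P_\qs = [r_i](\mathfrak{m}\SHJ_\qs) = ({}^{r_i}\mathfrak{m})[r_i]\SHJ_\qs = ({}^{r_i}\mathfrak{m})\SHJ_\qs$. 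Comparing $[e]$-coefficients of $\mathfrak{m}\SHJ_\qs$ and $({}^{r_i}\mathfrak{m})\SHJ_\qs$ gives $\mathfrak{m}\cdot\Delta = ({}^{r_i}\mathfrak{m})\cdot\Delta$, hence $\mathfrak{m} = {}^{r_i}\mathfrak{m}$ since $\Delta$ is invertible. As the $r_i$ generate $W^v$, $\mathfrak{m}$ is $W^v$-invariant. Then $\mathfrak{m}\cdot{}^v\!\Delta = {}^v({}\mathfrak{m}\cdot\Delta) = {}^v\Gamma$, so indeed $\mathfrak{m}\SHJ_\qs = P_\qs$, and we rename $\mathfrak{m}=\mathfrak{m}_\qs$.

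\textbf{Step 3 and the main obstacle.} The remaining point is that all these manipulations — in particular applying $[r_i]$ to a genuinely infinite formal sum and reading off coefficients — are legitimate; this is exactly the content of the warning in \ref{5.7.02} that $\Z_\qs((Y))[[W^v]]$ is not an algebra but one-sided products by elements of $\Z_\qs((Y))[W^v]$ are defined. Since $[r_i]\in\Z_\qs(Y)[W^v]$, left-multiplication by $[r_i]$ is fine, and reading the coefficient of a fixed $[v]$ is a continuous operation for the topology on $\Z_\qs((Y))$. The one genuinely substantive input — that $P_\qs$ makes sense at all as an element of $\Z_\qs((Y))[[W^v]]$, i.e. that the coefficients $C_{v,\ql;w_0}$ stabilize — is already established in \ref{5.7.01} via Proposition \ref{5.6}.2 (the gallery/height estimate), so I may invoke it. Thus the main obstacle is not in the algebra of this proposition per se but in bookkeeping the completions correctly; the real work was done upstream in \S\ref{s5} in showing $H_w\in\Z_\qs[[-w_0Q_+^\vee]][W^v]$ with the length-versus-height control, which is what makes $P_\qs$ "$v$-finite" and lets us avoid enlarging $\Z_\qs$. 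With that in hand, the proof above is essentially the "compare $[e]$-coefficients, then use $W^v$-covariance" argument, mirroring \cite[Lemma 2.19]{CheM13}.
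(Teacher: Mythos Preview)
Your Step~2 is circular. You write ``$P_\qs = \mathfrak{m}\SHJ_\qs$ as an ansatz'' and then apply $[r_i]$ to ``both sides'' to deduce ${}^{r_i}\mathfrak{m}=\mathfrak{m}$; but the equality $P_\qs=\mathfrak{m}\SHJ_\qs$ is precisely what you are trying to prove, so you cannot substitute it into $[r_i]P_\qs$. Note also that left $[r_i]$-invariance of $\sum_v{}^va\,[v]$ is tautological for \emph{any} $a\in\Z_\qs((Y))$, so Step~1 carries no information: the set of left-$[r_i]$-invariant elements is $\{\sum_v{}^va\,[v]:a\in\Z_\qs((Y))\}$, and two such elements with coefficients $a,b$ are related by a left scalar $m$ if and only if $m=ab^{-1}$ is $W^v$-invariant. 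That last condition is the whole content of the proposition and does not follow from left invariance alone.

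What is missing is exactly the ingredient the paper supplies: the \emph{right} invariance $P_\qs H_i=\qs_iP_\qs$ from Lemma~\ref{5.7.02}(i). The paper reads off the coefficient of $[e]$ in $P_\qs\cdot([r_i]c'(-\qa_i^\vee))$ and in $P_\qs\cdot(c'(\qa_i^\vee)[e])$, obtaining ${}^{r_i}\Gamma\cdot c'(-\qa_i^\vee)$ and $\Gamma\cdot c'(\qa_i^\vee)$ respectively; since $[r_i]c'(-\qa_i^\vee)-c'(\qa_i^\vee)[e]=\qs_iH_i-\qs_i^2[e]$ and $P_\qs(\qs_iH_i-\qs_i^2)=0$ by right invariance, these coincide. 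Combined with the identity $c'(-\qa_i^\vee)\cdot{}^{r_i}\Delta=c'(\qa_i^\vee)\cdot\Delta$ of \eqref{5.6.8}, this gives ${}^{r_i}(\Gamma\Delta^{-1})=\Gamma\Delta^{-1}$ directly, and then $\mathfrak{m}_\qs\SHJ_\qs=P_\qs$ follows as you indicate. So your outline becomes correct once you replace the circular step by this use of right $H_i$-invariance.
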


\begin{NB} This generalizes Theorem 2.18 in \cite{CheM13}, except that we do not give an explicit formula for the multiplier $\mathfrak{m}_\qs=\Gamma \Delta^{-1}$.
\end{NB}

\begin{proof} 
We first want to prove that  $\mathfrak{m}_\qs:=\Gamma \Delta^{-1}$ is $W^v-$ invariant in $\Z_\qs((Y))$, \ie that $$\forall i\in I,\,\,  \Gamma(^{r_i}\!\Delta)= ({ ^{r_i}\!\Gamma})\Delta,$$ 
or more simply,  by (\ref{5.6.8}), that: $$\forall i\in I,\,\, {^{r_i}\!\Gamma} c'(-\qa_i^\vee)=\Gamma  c'(\qa_i^\vee)\text{ in }  \Z_\qs((Y)).$$ 

It's easy to see  in $\Z_\qs ((Y))[[W^v]]$, that ${^{r_i}\!\Gamma} c'(-\qa_i^\vee)$ is the coefficient of $[e]$ in $P_\qs([r_i]c'(-\qa_i^\vee))$   and $\Gamma c'(\qa_i^\vee)$, the coefficient of $[e]$ in $P_\qs(c'(\qa_i^\vee))$.

Moreover, \begin{align*} [r_i]c'(-\qa_i^\vee)- c'(\qa_i^\vee)[e]&=c'(\qa_i^\vee)[r_i]- c'(\qa_i^\vee)[e]\\
&=\qs_iH_i-(b'(\qa_i^\vee)+c'(\qa_i^\vee))[e]\\
&=\qs_iH_i-\qs_i^2[e] \quad  (\text{by }(\ref{5.5.2}) \text { and } (\ref{eq:5.5.2})).\end{align*}
So, by Lemma \ref{5.7.02}, as $P_\qs(\qs_iH_i-\qs_i^2[e])=0$, we get ${^{r_i}\!\Gamma} c'(-\qa_i^\vee)=\Gamma  c'(\qa_i^\vee)$, and we have: 
$$P_\qs \Delta^{-1}= \sum_{w\in W^v}[w]\Gamma \Delta^{-1}=\sum_{w\in W^v}  {^w\!(\Gamma} \Delta^{-1}) [w]=\sum_{w\in W^v}(\Gamma \Delta^{-1}) [w]=(\Gamma \Delta^{-1}) \sum_{w\in W^v}[w].$$
\end{proof}

\section{Well-definedness of the symmetrizers as operators}\label{se6}

In this section, we introduce the images of $e^\ql$ (for $\ql\in Y^{++}$) by the symmetrizers $P_\qs$ and $\SHJ_\qs$.

\subsection{Definition and  expression of $P_\qs (e^{\ql})$ for  $\ql\in Y^{++}$}\label{5.8} 

We would like to consider the symmetrizer $P_\qs$ as an operator.
At first, we
 define and give a simple expression of $P_\qs (e^{\ql})$ for  $\ql\in Y^{++}$.
 As already mentioned, we must take care that $ \Z_\qs ((Y))[[W^v]]$ is not an algebra, we have to justify that the expression $P_\qs (e^{\ql})$ is well defined,
actually in some completion $\widehat\Z_\qs[[Y,-Q_+^\vee]]$ of $Z_\qs[[Y,-Q_+^\vee]]$.
 
 \par 
 Recall that $(W^v)^\ql \subset W^v$ is the set of minimal length representatives of $W^v/W^v_{\ql}$.
 Contrary to the affine case treated in \cite{BrKP15}, both  sets $W^v_{\ql}$ and $(W^v)^\ql$ can be infinite for the same $\ql$. Hence, there are some well-definedness issues associated to both sets.
 
\medskip
\parni{\bf 1)}
To be more precise, we first consider in $ \Z_\qs ((Y))[[W^v]]$, the two elements 
\begin{align} \label {5.8.1}
P_{\qs,\ql}=\sum_{w\in W^v_\ql} \qs_w H_w \\
 \label {5.8.2}P_{\qs}^{\ql}=\sum_{w\in (W^v)^\ql} \qs_w H_w .
\end{align}
The same proof as in the case of  $P_\qs\in \Z_\qs ((Y))[[W^v]]$, with $P^N_{\qs,\ql}=\sum_{w\in W^v_\ql\cap W^v_N} \qs_w H_w $ and 
$P^{\ql\, N}_{\qs}=\sum_{w\in (W^v)^\ql\cap W^v_N} \qs_w H_w $ instead of $P_\qs ^N$, enables us to see that these elements are well-defined, as the main argument is based on (\ref{5.6.1}). 

\smallskip
For all $(w_1,w_2)\in (W^v)^\ql\times  W^v_{\ql} $, we have $\ell (w_1w_2)=\ell(w_1)+\ell(w_2)$, and moreover every $w\in W^v$ has a unique factorization $w=w^\ql w_\ql$ with $(w^\ql, w_\ql)\in  (W^v)^\ql\times  W^v_{\ql} $ so
 \begin{align*}P^{\ql\, N}_{\qs}P^N_{\qs,\ql}&=\Big(\sum_{w_1\in (W^v)^\ql\cap W^v_N} \qs_{w_1}H_{w_1}\Big)\Big(\sum_{w_2\in W^v_\ql\cap W^v_N} \qs_{w_2} H_{w_2} \Big)\\
 &=\sum_{w_1\in (W^v)^\ql\cap W^v_N} \Bigg(\sum_{w_2\in W^v_\ql\cap W^v_N} \qs_{w_1w_2}H_{w_1w_2}\Bigg)\\
 &=P_\qs ^N+\sum_{w_1\in (W^v)^\ql\cap W^v_N} \Bigg(\sum_{\substack{w_2\in W^v_\ql \cap W^v_N\\ \ell(w_1w_2)>N} } \qs_{w_1w_2}H_{w_1w_2}\Bigg).
 \end{align*}
 
 Let us consider $v\in W^v$ and  $\ql \in Q_+^\vee$, we know by (\ref{5.6.3}) that $C_{v, \ql, e}=C^N_{v, \ql, e}$ for $N$ large enough.
 Moreover, using (\ref{5.6.1}) with $w_0=e$,     in the equality above, we can see that, if $N$ is such that $2 \HT(\ql)< N-\ell (v)$, then the coefficient of $e^{-\ql}[v]$ in $\sum_{w_1} (\sum_{w_2} \qs_{w_1w_2}H_{w_1w_2})$  (with always $w_1\in (W^v)^\ql\cap W^v_N,w_2\in W^{v}_\ql\cap W^v_N, \,\ell(w_1w_2)>N$) seen in $\Z_\qs[[Y, -Q_+^\vee]]$ is 0. It follows that, in $\Z_\qs ((Y))[[W^v]]$, we have the equality
 \begin{equation}\label{5.8.3} P_\qs=P^\ql_\qs P_{\qs, \ql}\, .\end{equation}

In the same way as in the proof of the fact that $P_\qs$ is well-defined in $ \Z_\qs ((Y))[[W^v]]$, we want to see  that in the computation of 

\begin{align*}P_\qs^{\ql N}(e^\ql)&=\Big(\sum_{w\in (W^v)^\ql\cap W^v_N} \qs_w H_w \Big)(e^\ql)\\
 &=\sum_{w\in (W^v)^\ql\cap W^v_N} \qs_w \sum_{v\leq_{_{B}} w}\sum_{\qm\in Q_+^\vee}k^w_{v,\qm,w_0} e^{-\qm+v\ql}
\quad \text { by }\,\,  (\ref{5.6.1}) \end{align*}
the number of terms which give the same $e^\Lambda$ with $\Lambda\in \ql- Q_+^\vee$ is finite and does not depend on $N$ for $N$ large enough. This will give a sense to the expression $P_\qs^{\ql N}(e^\ql)$ and its limit $P_\qs^{\ql}(e^\ql)$ when $N\to \infty$.

\subsection{Computation of $H_w(e^\ql)$}

We consider, for $w\in W^v$, $\ell_\ql (w):=\ell(w^\ql)$, where $w=w^\ql w_\ql$ is the factorization above. 

\smallskip
In the following lemma we give a formula analogous to (\ref{5.6.1}) for the calculus of $H_w(e^\ql)$. 
We will use the same kind of proof as in Proposition \ref{5.6}.2.  If we consider the case of $w\in (W^v)^\ql$ in this proposition, then the gallery $\Gamma^0$ is stretched between $C_f$ and  $w(C_f)$ which is the nearest (for $d$) chamber from $C^v_f$ among the chambers of $\A$ containing the segment  germ $w([0,\ql))$; so $\ell_\ql(w)=\ell(w)= d(C^v_f,w(C^v_f))$.

More generally, for $v\in W^v$  with  reduced decomposition $v=r_{i_1}r_{i_2}\ldots r_{i_n}$ such that $v^\ql= r_{i_1}r_{i_2}\ldots r_{i_k}$ and $v_\ql= r_{i_{k+1}}\ldots r_{i_n}$, then for the gallery   $\Gamma$ stretched  from $C^v_f$ to $vC^v_f$ of length  $n$ and of  type $( {i_1},\ldots ,{i_n})$, we have $\ell_\ql (v)=k=d(C^v_f, v^\ql (C^v_f))=d(C^v_f, v([0,\ql)))$ where $d(C^v_f, v([0,\ql)))=\displaystyle{\min_{\substack{v([0,\ql))\subset C \\C\subset \A} } d(C^v_f, C)}$, the minimum over all chambers $C$ containing $v([0,\ql))$.

We keep below the same notation as in \ref{5.5.02} and Proposition \ref{5.6}.2, for $w\in W^{v\ql}$.

\begin{lemm*}

 $2\HT \Bigl ( \displaystyle  \sum_{\substack{k\in J \\ v_{k-1}^J(\qa_{i_k})\in Q_+}} {v^J_{k-1}({\qa_{i_k}^\vee})}\Bigr )\geq\ell_\ql(w)- \ell_\ql(w^J).$ 

\end{lemm*}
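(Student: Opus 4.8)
The statement to prove is a refinement of Proposition~\ref{5.6}.2 adapted to the setting $w\in(W^v)^\ql$, where the combinatorial quantity $\ell(w_0^{-1}\bullet)$ is replaced by $\ell_\ql(\bullet)$, the length of the $(W^v)^\ql$-component. My plan is to imitate closely the gallery argument of Proposition~\ref{5.6}.2, but run it in the vectorial apartment $\A^v$ while keeping track of the position of the segment germ $w([0,\ql))$ rather than the chamber $w(C^v_f)$, using the fact (recalled in the paragraph preceding the lemma) that $d(C^v_f,w^\ql(C^v_f))=d(C^v_f,w([0,\ql)))=\ell_\ql(w)$.

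First I would fix a reduced decomposition $w=r_{i_1}\cdots r_{i_n}$ compatible with the factorization $w=w^\ql w_\ql$, so that $w^\ql=r_{i_1}\cdots r_{i_k}$ and $w_\ql=r_{i_{k+1}}\cdots r_{i_n}$, and reproduce the construction of the sequence of galleries $(\Gamma^m)_{0\le m\le n}$ from $C^v_f$ to $w^J_m(C^v_f)$ obtained by successive foldings at the walls $\ker(v^J_{m-1}(\qa_{i_m}))$ for $m\in J$ and leaving the gallery unchanged for $m\notin J$, exactly as in the proof of Proposition~\ref{5.6}.2. The only new input is that instead of measuring distances $d(w_0(C^v_f),C^n_m)$ I would measure $d_\ql(C^v_f,C^n_m):=$ the distance from $C^v_f$ to the germ direction carried by $C^n_m$, more precisely $\ell_\ql$ of the element $w^J_m$; telescoping the differences $\ell_\ql(w^J_{m-1})-\ell_\ql(w^J_m)$ over $m\in J$ gives $\ell_\ql(w)-\ell_\ql(w^J)$ on the nose.

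The key point is then the sign analysis, term by term in $J$. For $m\in J$ with $v^J_{m-1}(\qa_{i_m})\in Q_-$, the chambers past position $m$ get reflected to the same side of the wall $\ker(v^J_{m-1}(\qa_{i_m}))$ as the base region relevant to $\ell_\ql$ (because the germ $[0,\ql)$ lies in a region where that root is $\le 0$), so the corresponding increment $\ell_\ql(w^J_{m-1})-\ell_\ql(w^J_m)$ is $\le 0$ and drops out of the bound; here one has to check that replacing the chamber $C^v_f$ by the ``$\ql$-localized'' target does not spoil the same-side argument, which follows because $w\in(W^v)^\ql$ guarantees the relevant reduced subword $r_{i_1}\cdots r_{i_k}$ is still reduced and still computes $\ell_\ql$. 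For $m\in J$ with $v^J_{m-1}(\qa_{i_m})\in Q_+$, the triangle inequality (applied to the $\ell_\ql$-version of the distance, which still satisfies the triangle inequality since it is a distance in the building $\sht^+_0\SHI$ modulo the stabilizer $W^v_\ql$, or more elementarily since $\ell_\ql(w)=d(C^v_f,w([0,\ql)))$) gives $\ell_\ql(w^J_{m-1})-\ell_\ql(w^J_m)\le \ell(r_{w_0^{-1}v^J_{k-1}(\qa_{i_k})})\le \ell(r_{v^J_{m-1}(\qa_{i_m})})$, and then the little lemma $\ell(r_\qa)\le 2\HT(\qa^\vee)$ already proven after Proposition~\ref{5.6}.2 finishes the estimate. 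Summing over the contributing $m\in J$ yields $\ell_\ql(w)-\ell_\ql(w^J)\le 2\HT\bigl(\sum_{k\in J,\ v^J_{k-1}(\qa_{i_k})\in Q_+}v^J_{k-1}(\qa_{i_k}^\vee)\bigr)$, which is the assertion.

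The main obstacle I anticipate is the triangle-inequality/same-side step in the $\ell_\ql$ language: whereas in Proposition~\ref{5.6}.2 one works with genuine gallery distances between chambers in a Coxeter complex, here $\ell_\ql(v)=d(C^v_f,v([0,\ql)))$ is a ``distance to a face'' and I must argue cleanly that (i) it still obeys the triangle inequality in the relevant form, and (ii) the folding at $\ker(v^J_{m-1}(\qa_{i_m}))$ interacts with the germ $v^J_{m-1}([0,\ql))$ in the expected way. Both should reduce to the observation that $d(C^v_f,v([0,\ql)))=\min\{d(C^v_f,C)\mid C\supset v([0,\ql)),\ C\subset\A\}$ together with the stretching property of the galleries $\Gamma^m$ recalled before the statement, so the argument is essentially the one of Proposition~\ref{5.6}.2 with $w_0(C^v_f)$ systematically replaced by the germ data; I do not expect genuinely new phenomena, only bookkeeping.
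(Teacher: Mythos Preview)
Your overall plan is correct and is essentially the paper's proof: adapt the gallery argument of Proposition~\ref{5.6}.2 with $\ell$ replaced by $\ell_\ql(v)=d(C^v_f,v([0,\ql)))$, telescope over $J$, discard the $Q_-$ terms, bound the $Q_+$ terms by $\ell(r_{v_{k-1}^J(\qa_{i_k})})$ via the triangle inequality, and finish with the little lemma $\ell(r_\qa)\le 2\HT(\qa^\vee)$.

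There is, however, one place where your sketch is off. In the $Q_-$ step you invoke the ``same side'' mechanism of Proposition~\ref{5.6}.2 and attribute its validity to $w\in (W^v)^\ql$. The paper does \emph{not} argue this way here, and a direct transcription of the same-side argument does not suffice: it only controls $d(C^v_f,C_n^{k-1})$ versus $d(C^v_f,C_n^k)$, whereas $\ell_\ql$ is a minimum over all chambers containing the germ, so the inequality for these particular chambers does not yield $\ell_\ql(w_{k-1}^J)\le \ell_\ql(w_k^J)$. The paper instead shows that $C^v_f$ and the germ $w_k^J([0,\ql))$ lie on \emph{opposite} sides of the wall $\ker v_{k-1}^J(\qa_{i_k})$ (or the germ lies on the wall): one has $v_{k-1}^J(\qa_{i_k})(C^v_f)<0$ while $v_{k-1}^J(\qa_{i_k})(w_k^J(\ql))=(r_{i_n}\cdots r_{i_{k+1}}(\qa_{i_k}))(\ql)\ge 0$, the last inequality using that $\ql\in Y^{++}$ and that $r_{i_k}\cdots r_{i_n}$ is reduced (so $r_{i_n}\cdots r_{i_{k+1}}(\qa_{i_k})\in\QF^+$). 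Hence any minimal gallery from $C^v_f$ to the nearest chamber containing $w_k^J([0,\ql))$ crosses or meets the wall, and folding gives $\ell_\ql(w_{k-1}^J)\le \ell_\ql(w_k^J)$. So the crucial input is the dominance of $\ql$ together with the reducedness of the full word, not the hypothesis $w\in(W^v)^\ql$.
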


\begin{proof} 

 By considering $\ell_\ql$ instead of $\ell$ in the proof  of Proposition  \ref{5.6}.2, we get,
 for all ${k\in [0,\ldots , n]}$, 
$$\ell_\ql (w_k^J)=d(C_f^v, w_k^J([0,\ql))) \quad \text {and }\quad 
\ell_\ql (w)-\ell_\ql (w^J)=\displaystyle {\sum_{k\in J}  \ell_\ql (w_{k-1}^J)-\ell_\ql (w_k^J)}$$

If $k\in J$ and  $v_{k-1}^J(\qa_{i_k})\in Q_-$, then $\ell_\ql (w_{k-1}^J)\leq \ell_\ql (w_k^J)$. Indeed, if we consider a minimal gallery from $C_f^v$ to $w_k^J([0,\ql))$ (\ie to the nearest chamber from $C_f^v$, among the chambers containing $w_k^J([0,\ql)$), then this gallery meets or crosses the wall $\ker v_{k-1}^J(\qa_{i_k})$ because 
$v_{k-1}^J(\qa_{i_k})(C_f^v)<0$ and $v_{k-1}^J(\qa_{i_k})(w_k^J(\ql))=(r_{i_n}\cdots r_{i_{k+1}}(\qa_{i_k}))(\ql)\geq 0$. Moreover  $w_{k-1}^J([0,\ql))=r_{v_{k-1}^J(\qa_{i_k})}w_{k}^J([0,\ql))$. So if the gallery crosses the wall then $\ell_\ql (w_{k-1}^J)< \ell_\ql (w_k^J)$. If it does not cross the wall, then  $v_{k-1}^J(\qa_{i_k})(w_k^J(\ql))=0$ and $w_{k-1}^J([0,\ql))=w_{k-1}^J([0,\ql))$  and  $\ell_\ql (w_{k-1}^J)= \ell_\ql (w_k^J)$.
So, in both cases  $\ell_\ql (w_{k-1}^J)\leq\ell_\ql (w_k^J)$. 

With this result, we obtain the majoration 
$$ \ell_\ql (w)-\ell_\ql (w^J)\leq \displaystyle  \sum_{\substack{k\in J \\ v_{k-1}^J(\qa_{i_k})\in Q_+}} \ell _\ql(w_{k-1}^J)-\ell_\ql (w_k^J).$$

If $k\in J$ and   $v_{k-1}^J(\qa_{i_k})\in Q_+$, we consider $C_1$ (\resp $C_2$) the nearest chamber from $C^v_f$ (\resp $r_{v_{k-1}^J(\qa_{i_k})}(C_f^v)$) among the chambers containing the segment germ $w_{k-1}^J([0,\ql))$.
Then we get,

\begin{align*}\ell _\ql(w_{k-1}^J)-\ell_\ql (w_k^J)&=d (C_f^v,  w_{k-1}^J([0,\ql))) -d (C_f^v, w_k^J([0,\ql))) \\
                                                                      &=d (C_f^v, w_{k-1}^J([0,\ql)))-d ( C_f^v,r_{v_{k-1}^J(\qa_{i_k})}w_{k-1}^J([0,\ql)))\\
                                                                      &=d (C_f^v, w_{k-1}^J([0,\ql)))-d ( r_{v_{k-1}^J(\qa_{i_k})}(C_f^v),w_{k-1}^J([0,\ql)))\\
                                                                     &=d (C_f^v, C_1)-d ( r_{v_{k-1}^J(\qa_{i_k})}(C_f^v),C_2)\\
                                                                        &\leq d (C_f^v,  r_{v_{k-1}^J(\qa_{i_k})}(C_f^v))                                   \end{align*}
as $d(C^v_f,C_1)\leq d(C^v_f,C_2) \leq d(C^v_f, r_{v_{k-1}^J(\qa_{i_k})}(C_f^v)) + d ( r_{v_{k-1}^J(\qa_{i_k})}(C_f^v),C_2)$, by the triangle inequality for $d$.

  Finally we conclude as in the end of the proof of Proposition of \ref{5.6}.2.
\end{proof}  

\subsection{Well-definedness of $P_{\qs}(e^\ql)$}

First, we get now the  expected result on $P_{\qs}^{\ql}(e^\ql)$.

\begin{prop*}
 $P_{\qs}^{\ql}(e^\ql) =\displaystyle {lim_{N\to +\infty}}P_{\qs}^{\ql N}(e^\ql)$ is well defined in $ \Z_\qs [[Y, -Q_+^\vee]]$, and in $ \Z'_\qs [[Y, -Q_+^\vee]]$.

\end{prop*}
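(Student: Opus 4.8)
The plan is to adapt the argument already used for $P_\qs\in\Z_\qs((Y))[[W^v]]$ in \ref{5.7.01}, tracking the extra translation term $e^{v\ql}$ coming from the evaluation at $e^\ql$. Concretely, for $w\in(W^v)^\ql$ and $w_0=e$, formula (\ref{5.6.1}) gives
$$
\qs_w H_w(e^\ql)=\sum_{v\leq_{_B}w}\sum_{\ql'\in Q_+^\vee}\qs_w k^w_{v,\ql',e}\,e^{-\ql'+v\ql},
$$
with $k^w_{v,\ql',e}=0$ unless $2\HT(\ql')\geq\ell(w)-\ell(v)=\ell_\ql(w)-\ell_\ql(v)$ (using $w,v\in(W^v)^\ql$, so $\ell=\ell_\ql$ on the relevant elements; here one must be a little careful and instead invoke the Lemma of \S6.2, which gives exactly the bound $2\HT\big(\sum v^J_{k-1}(\qa_{i_k}^\vee)\big)\geq\ell_\ql(w)-\ell_\ql(w^J)$, valid for all $w\in W^v$, not only those in $(W^v)^\ql$). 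I would then combine this with the standard fact that $v\ql\in\ql-Q_+^\vee$ with $\HT(\ql-v\ql)$ controlled by $\ell_\ql(v)$ — more precisely, $v\ql\leq_{Q^\vee}\ql$ and the ``defect'' $\HT(\ql-v\ql)$ grows with $\ell_\ql(v)$ — so that $-\ql'+v\ql=\ql-\mu$ with $\HT(\mu)$ bounded below by a quantity tending to infinity with $\ell(w)$.

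The key step is the counting/finiteness argument. Fix $\Lambda\in\ql-Q_+^\vee$; I must show that only finitely many pairs $(w,v,\ql')$ with $w\in(W^v)^\ql$, $v\leq_{_B}w$, $\ql'\in Q_+^\vee$ contribute a nonzero term $\qs_w k^w_{v,\ql',e}\,e^\Lambda$, and that for $N$ large this set is independent of $N$ once we restrict to $w\in(W^v)^\ql\cap W^v_N$. Writing $\Lambda=\ql-\mu$ with $\mu=\ql'-v\ql+(\ql-\ql)\in Q_+^\vee$, the constraint $e^{-\ql'+v\ql}=e^{-\mu}$ together with $2\HT(\ql')\geq\ell(w)-\ell(v)$ and $\HT(\ql-v\ql)\leq C\,\ell_\ql(v)$ (for a suitable constant, coming from $\ql$ dominant and $v\ql$ obtained by successive simple reflections) forces
$$
\HT(\mu)\geq\HT(\ql')-\HT(\ql-v\ql)\geq\tfrac12(\ell(w)-\ell(v))-C\ell_\ql(v)\geq\tfrac12\ell(w)-\big(\tfrac12+C\big)\ell_\ql(v).
$$
Since $v\leq_{_B}w$ and $v\in(W^v)^\ql$ gives $\ell(v)=\ell_\ql(v)\leq\ell(w)$, a cleaner route is to use $v\leq_{_B}w$ with $w\in(W^v)^\ql$ to get $v\in(W^v)^\ql$ as well (minimal coset representatives are closed under Bruhat order below), so $\ell_\ql(v)=\ell(v)$, and then the two bounds together show $\HT(\mu)\to\infty$ as $\ell(w)\to\infty$. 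Hence $\HT(\mu)\leq\HT(\ql-\Lambda)$ bounds $\ell(w)$, so only finitely many $w$ intervene; for each such $w$ there are finitely many $v$ and, by the support bound on $H_w$ in $\Z_\qs[[Y,-Q_+^\vee]]$, finitely many $\ql'$. This simultaneously proves that the coefficient of $e^\Lambda$ in $P_\qs^{\ql N}(e^\ql)$ stabilizes for $N$ large, so $P_\qs^\ql(e^\ql):=\lim_N P_\qs^{\ql N}(e^\ql)$ is a well-defined element of $\Z_\qs[[Y,-Q_+^\vee]]$, and since each $H_w$ has coefficients in $\Z'_\qs$ on the relevant parts (see (\ref{5.5.8}) and the discussion in \ref{4.3.1}), the limit in fact lies in $\Z'_\qs[[Y,-Q_+^\vee]]$.

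I would organize the write-up as: first recall (\ref{5.6.1}) and the Lemma of \S6.2 to record the two quantitative inputs; second, establish the elementary inequality relating $\HT(\ql-v\ql)$ to $\ell_\ql(v)$ for $\ql\in Y^{++}$ (a short induction on a reduced word, using that each simple reflection $r_i$ applied to a dominant-ish weight subtracts a nonnegative multiple of $\qa_i^\vee$); third, run the finiteness count above to pin down, for each $\Lambda\in\ql-Q_+^\vee$, a finite set of contributing $(w,v,\ql')$ stable in $N$; fourth, conclude well-definedness of the limit in $\Z_\qs[[Y,-Q_+^\vee]]$ and refine to $\Z'_\qs[[Y,-Q_+^\vee]]$. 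The main obstacle is the second step — making the bound $\HT(\ql-v\ql)\le C\,\ell_\ql(v)$ precise and uniform in $v$ (and checking it only needs $\ell_\ql$, not $\ell$, since the $W^v_\ql$-part of $v$ fixes $\ql$), because everything else is a bookkeeping assembly of results already proved in \S\ref{s5} and \S6.2.
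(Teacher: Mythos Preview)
Your overall strategy---invoke the Lemma of \S6.2 to get the $\ell_\ql$-bound, then run a finiteness count for each fixed $\Lambda\in\ql-Q_+^\vee$---is the paper's strategy, and the ingredients you list are the right ones. But two steps in your assembly do not work.

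First, the claim ``minimal coset representatives are closed under Bruhat order below'' is false. In type $A_2$ with $\ql$ on the wall $\ker\qa_1$ (so $W^v_\ql=\{e,r_1\}$ and $(W^v)^\ql=\{e,r_2,r_1r_2\}$), one has $r_1\leq_{_B} r_1r_2$ yet $r_1\notin(W^v)^\ql$. So you cannot pass from ``$v\leq_{_B} w$, $w\in(W^v)^\ql$'' to ``$v\in(W^v)^\ql$'', and in particular you do not get $\ell(v)=\ell_\ql(v)$. This breaks your chain of inequalities: with the $v$ coming from (\ref{5.6.1}) (namely $v=w^J$), the bound $2\HT(\ql')\geq\ell(w)-\ell(v)$ leaves $\ell(v)$ uncontrolled because the $W^v_\ql$-part of $v$ can be arbitrarily long without changing $v\ql$.

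Second, the auxiliary inequality $\HT(\ql-v\ql)\leq C\,\ell_\ql(v)$, even if true, points the wrong way: it gives a \emph{lower} bound on $\ell_\ql(v)$, whereas you need an upper bound. It is simply not needed. The correct finiteness input is elementary and direct: $\mu:=\ql-\Lambda\in Q_+^\vee$ is fixed, the constraint $v\ql-\Lambda=\ql'\in Q_+^\vee$ forces $v\ql\in(\ql-Q_+^\vee)\cap(\Lambda+Q_+^\vee)$, and this interval in $Q^\vee$ is finite. Since $v\mapsto v\ql$ is injective on $(W^v)^\ql$, there are only finitely many admissible $v\in(W^v)^\ql$. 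For each such $v$, the Lemma of \S6.2 gives $\ell_\ql(w)\leq\ell_\ql(v)+2\HT(v\ql-\Lambda)$, and since $w\in(W^v)^\ql$ one has $\ell_\ql(w)=\ell(w)$, so $w$ lies in a finite set.

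The repair is therefore not to quote (\ref{5.6.1}) but to go back to (\ref{5.6.00}) and regroup the $J$-sum by the \emph{value} $w^J(\ql)$, parameterised by $v\in(W^v)^\ql$ with $v\leq_{_B} w$ (this works because $w^J(\ql)=(w^J)^\ql(\ql)$ and $(w^J)^\ql\leq_{_B} w^J\leq_{_B} w$). Then the Lemma of \S6.2 applies with $\ell_\ql(w^J)=\ell_\ql(v)=\ell(v)$, and the interval-finiteness argument above finishes. This is exactly what the paper does; your second step (the $C\,\ell_\ql(v)$ bound) should be dropped entirely.
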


\begin{NB} The fact that $P_\qs^\ql(e^\ql)$ is in $\Z_\qs[[Y,-Q_+^\vee]]$ is a key result for section \ref{s6} below.
The fact that $P_\qs^\ql(e^\ql)$ is well defined in $\widehat\Z_\qs[[Y,-Q_+^\vee]]$ (see 3) below) is easier to get: use (\ref{5.8.3}), Proposition \ref{5.6.9} and the same result for $\sum_{w\in W^v}{^w\QD}e^{w\ql}$ (see below \ref{5.9} and \ref{5.10}).
\end{NB}

\begin{proof}

 Let us remark that $\{w^J(\ql)\, | J\subset \{1,\ldots , n\} \}= \{v(\ql) | v\in (W^v)^\ql\,\,   v\leq_{_{B}} w\}$, indeed $w^J(\ql)=w^{J\, \ql}(\ql)$, with $w^{J\, \ql}\leq_{_{B}}w^J\leq_{_{B}}w$ and  otherwise $ v\leq_{_{B}} w$ implies $ v=w^J$ for some $ J\subset \{1, \ldots , n\}$ .
Using  (\ref{5.6.00}), we have 

\begin{align*} H_w(e^\ql)&=\sum_{J\subset [1,\ldots, n]} \biggl (\prod_{k=1}^{k=n}{ d^J_{k}} \Bigl ( v^J_{k-1}({\qa_{i_k}^\vee})\Bigr )\biggr) [w^J] (e^\ql)\\
&=\sum_{\substack{ {v\in (W^v)^\ql}\\ {v\leq_{_{B}}  w}}  }\Biggl ( \sum_{\substack {{J\subset [1,\ldots, n]} \\ {w^J(\ql )=v(\ql)}}} \Bigl (\prod_{k=1}^{k=n}{ d^J_{k}} \Bigl ( v^J_{k-1}({\qa_{i_k}^\vee})\Bigr )\Bigr ) e^{v(\ql) }\Biggr ).\end{align*}

{For $J$ such that $w^J(\ql)=v(\ql)$, we know by (\ref{5.6.0}) that
$  e^{-\sum_k {v^J_{k-1}({\qa_{i_k}^\vee})}}\Z_\qs[[-Q_+^\vee]]$ contains 
$ \displaystyle \prod_{k=1}^{k=n}{d^J_{k}} \Bigl ( v^J_{k-1}({\qa_{i_k}^\vee})\Bigr )$,} 
where the sum runs over the $k\in J$ such that  $v^J_{k-1}({\qa_{i_k}^\vee}) \in Q_+^\vee$. Moreover, the above lemma tells that the height of this sum is at most $(\ell_\ql(w)-\ell_\ql(v))/2$. 

So, we can rewrite the last equality into:

$$H_w(e^\ql)=:\sum_{\substack{ {v\in (W^v)^\ql}\\ {v\leq_{_{B}} w}}  }\biggl ( \sum_{\substack{\qn\in Q_+^\vee \\ 2 \HT(\qn)\geq \ell_\ql(w)-\ell_\ql(v)}}k^w_{v,\qn} e^{-\qn+v(\ql)}\biggr )\quad \text {for some } k^w_{v,\qn}\in \Z_\qs.
$$
This expression enables us to obtain the expression of  $P_\qs^{\ql\, N}(e^\ql)$, \begin{align*}P_\qs^{\ql\, N}(e^\ql)&=\sum_{w\in (W^v)^\ql\cap W^v_N} \qs_w H_w (e^\ql)\\
&=\sum_{v\in (W^v)^\ql\cap W^v_N} \sum_{\substack{{w\in (W^v)^\ql}\\ {v\leq_{_{B}} w}}  } \qs_w\biggl ( \sum_{\substack{\qn\in Q_+^\vee \\ 2 \HT(\qn)\geq \ell_\ql(w)-\ell_\ql(v)}}k^w_{v,\qn} e^{-\qn+v(\ql)}\biggr ).
\end{align*}

As $\ql\in Y^{++}$, for any $v\in W^v$, $\ql-v(\ql)\in Q_+^\vee$. 
Moreover, if $\Lambda \in \ql-Q_+^\vee$, the set $(\ql-Q_+^\vee)\cap ( \Lambda+Q_+^\vee)$ is finite, so the number of $v\in (W^v)^\ql$ such that $v(\ql) -\Lambda\in Q_+^\vee$ is finite and, for $v$ in this set, we know that the $w$ which have a contribution to the term in $e^\Lambda$ are in $W^{v \, \ql}\cap W^v_{2ht(v(\ql)-\Lambda)+\ell (v)}$. So, for $N$ large enough, the coefficient of $e^\Lambda$ in  $P_\qs^{\ql\, N}(e^\ql)$ is independent of $N$, and we get the well-definedness of $P_{\qs}^{\ql}(e^\ql) $ in $ \Z_\qs [[Y, -Q_+^\vee]]$. 

\par The reader will check by himself that $P_\qs^\ql(e^\ql)\in \Z'_\qs[[Y,-Q_+^\vee]]$.
\end{proof}

\medskip
\parni
Let us introduce now  the completion $\hat\Z_\qs$ of $\Z_\qs$ (\resp $\hat\Z'_\qs$ of $\Z'_\qs$), as the ring of Laurent series in $\qs_i$ ($i\in I$) with coefficients  polynomials in $  {\qs'_{i}}^{\pm 1}$ (\resp the ring of series in $\qs_i\qs'_i$ (for $i\in I$) and $\qs_i(\qs'_i)^{-1}$ {(for $i\in E=\{i\in I \mid \qs_i\neq\qs'_i\}$))} : 

 \begin{equation}\label{5.8.6}  \hat{\Z}_\qs:=\Z[ ({\qs'_{i}}^{\pm 1})_{ i\in E} ](((\qs_{i})_{ i\in I} )),\end{equation}  
 
 \begin{equation}\label{5.8.6a} {\hat{\Z}'_\qs:=\Z[[ ({\qs_i\qs'_{i}})_{ i\in I}, ({\qs_i\qs'_{i}}^{- 1})_{ i\in E} ]]\subset\Z[({\qs'_{i}}^{\pm 1})_{ i\in E} ][[(\qs_{i})_{ i\in I} ]]}.\end{equation}  

For $W'$ any subgroup of $W^v$, the following Poincaré series  is defined and invertible in $\hat{\Z}'_\qs\subset\hat{\Z}_\qs$:
\begin{equation} \label{5.8.7}W'(\qs^2)= \sum_{w\in W'}\qs^2_w.
\end{equation} 
We consider $\hat  \Z_\qs[[Y,-Q^\vee_+]]$ (\resp $\hat  \Z'_\qs[[Y,-Q^\vee_+]]$) as the set consisting of all infinite formal sums $f=\sum_{y\in Y} a_ye^y$, with $a_y\in  \hat\Z_\qs$ (\resp $a_y\in  \hat\Z'_\qs$) such that $supp(f):=\{ y\in Y\, |\, a_y\not= 0\} \subset \bigcup_{i=1}^n\,(\ql_i- Q_+^\vee)$ for some $\ql_1, \ldots, \ql_n\in Y$.

\begin{prop}\label{p6.4}  The element $P_{\qs}(e^\ql)$ is a well-defined element of $\hat  \Z_\qs[[Y,-Q^\vee_+]]$ and of $\hat  \Z'_\qs[[Y,-Q^\vee_+]]$ and we have 

$$P_{\qs}(e^\ql)=W^v_{\ql}(\qs^2)P_{\qs}^{\ql}(e^\ql) \text{with } W^v_{\ql}(\qs^2)\text { the Poincaré series of }   W^v_{\ql}.$$
\end{prop}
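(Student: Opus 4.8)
The plan is to deduce Proposition~\ref{p6.4} from the already-established well-definedness of $P_\qs^\ql(e^\ql)$ (the Proposition just above it) together with the factorization $P_\qs = P_\qs^\ql P_{\qs,\ql}$ of \eqref{5.8.3}. First I would recall that $P_{\qs,\ql} = \sum_{w\in W^v_\ql}\qs_w H_w$ is a well-defined element of $\Z_\qs((Y))[[W^v]]$, obtained by the same limiting argument as for $P_\qs$. The key observation is that $W^v_\ql = \mathrm{Stab}_{W^v}(\ql)$ fixes $\ql$, so each $H_w$ with $w\in W^v_\ql$, when applied to $e^\ql$, ought to produce (up to lower-order corrections) the scalar $\qs_w^2 e^\ql$. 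Concretely, I would first prove the auxiliary fact that for $w\in W^v_\ql$ one has $H_w(e^\ql) = \qs_w e^\ql + (\text{terms }e^\qm\text{ with }\qm\in\ql-Q^\vee_+\setminus\{0\})$; this follows from the explicit formula \eqref{5.6.00} for $H_w$ as $\sum_J(\cdots)[w^J]$ combined with the height estimate of the previous subsection's Lemma (with $\ell_\ql$), noting that for $w\in W^v_\ql$ we have $\ell_\ql(w)=0$ and $w^J(\ql)=\ql$ for all $J$, while $J=\emptyset$ contributes exactly $\qs_w e^\ql$ by the Remark in~\ref{5.5.02}.

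Next I would set up the computation of $P_{\qs,\ql}(e^\ql)$ directly. Using the description of $H_w(e^\ql)$ above, summing over $w\in W^v_\ql\cap W^v_N$ and checking, exactly as in the proof of well-definedness of $P_\qs^\ql(e^\ql)$, that for each $\Lambda\in\ql-Q^\vee_+$ only finitely many $w$ (those in $W^v_\ql\cap W^v_{2\HT(\ql-\Lambda)}$) contribute to the coefficient of $e^\Lambda$, one sees that $P_{\qs,\ql}(e^\ql)$ is well defined in $\hat\Z_\qs[[Y,-Q^\vee_+]]$. Moreover the leading term (the coefficient of $e^\ql$ itself) is $\sum_{w\in W^v_\ql}\qs_w^2 = W^v_\ql(\qs^2)$, which lies in $\hat\Z_\qs$ and is invertible by \eqref{5.8.7}. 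I would then argue that in fact $P_{\qs,\ql}(e^\ql) = W^v_\ql(\qs^2)\,e^\ql$ exactly: since $P_{\qs,\ql}$ is $W^v_\ql$-symmetric in the appropriate sense — indeed $H_i P_{\qs,\ql} = \qs_i P_{\qs,\ql}$ for $i$ with $r_i\in W^v_\ql$, by the same telescoping computation as in Lemma~\ref{5.7.02}(i) — the element $P_{\qs,\ql}(e^\ql)$ is fixed (up to the scalars coming from $H_i\mapsto \qs_i$) by all such $r_i$, and a $W^v_\ql$-invariant element of $\hat\Z_\qs[[Y,-Q^\vee_+]]$ supported in $\ql-Q^\vee_+$ with $\ql$ dominant and stabilized exactly by $W^v_\ql$ must be a scalar multiple of $e^\ql$; comparing the $e^\ql$-coefficients gives the scalar $W^v_\ql(\qs^2)$. (Alternatively, and more robustly, one can avoid claiming exact equality and simply carry the correction terms through; but the clean statement is the exact one.)

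Finally I would combine the pieces. Applying \eqref{5.8.3} to $e^\ql$, and justifying that the action of the composite $P_\qs^\ql P_{\qs,\ql}$ on $e^\ql$ equals $P_\qs^\ql$ applied to $P_{\qs,\ql}(e^\ql)$ — this needs a small argument since $\Z_\qs((Y))[[W^v]]$ is not an algebra, but it is legitimate because $P_{\qs,\ql}$ lands in $\Z_\qs((Y))[W^v]$ with the leftmost factor $P_\qs^\ql$ in $[[W^v]]$, exactly the situation discussed at the start of~\ref{5.7.02} where $f h$ is well defined for $f\in\Z_\qs((Y))[[W^v]]$, $h\in\Z_\qs((Y))[W^v]$, and one can further apply the result to $e^\ql$ by the finiteness-of-contributions argument — we get
$$
P_\qs(e^\ql) = P_\qs^\ql\big(P_{\qs,\ql}(e^\ql)\big) = P_\qs^\ql\big(W^v_\ql(\qs^2)\,e^\ql\big) = W^v_\ql(\qs^2)\,P_\qs^\ql(e^\ql),
$$
where in the last step $W^v_\ql(\qs^2)$ is a central scalar in $\hat\Z_\qs$. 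Since $P_\qs^\ql(e^\ql)\in\Z_\qs[[Y,-Q^\vee_+]]\subset\hat\Z_\qs[[Y,-Q^\vee_+]]$ by the preceding Proposition, and $W^v_\ql(\qs^2)\in\hat\Z_\qs$, the product lies in $\hat\Z_\qs[[Y,-Q^\vee_+]]$; the $\Z'_\qs$-version is identical, using that $W^v_\ql(\qs^2)\in\hat\Z'_\qs$ and $P_\qs^\ql(e^\ql)\in\Z'_\qs[[Y,-Q^\vee_+]]$. The main obstacle I anticipate is the careful bookkeeping needed to make the ``associativity'' step $P_\qs(e^\ql) = P_\qs^\ql(P_{\qs,\ql}(e^\ql))$ rigorous in the non-algebra $\Z_\qs((Y))[[W^v]]$: one must check, by a finiteness argument on the supports at each bounded truncation level $N$, that the limit defining $P_\qs(e^\ql)$ and the limit of $P_\qs^{\ql N}(P_{\qs,\ql}^N(e^\ql))$ agree coefficient-by-coefficient, much as in the proof of \eqref{5.8.3} itself.
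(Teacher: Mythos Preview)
Your overall strategy---use the factorization $P_\qs = P_\qs^\ql P_{\qs,\ql}$ and compute $P_{\qs,\ql}(e^\ql)$---is exactly what the paper does, but you miss the one-line observation that makes it work and instead patch with a false invariance claim.

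The point you miss is that $H_w(e^\ql)=\qs_w e^\ql$ \emph{exactly}, with no lower-order terms, for every $w\in W^v_\ql$. Since $\ql\in Y^{++}$ is dominant, its stabilizer $W^v_\ql$ is the standard parabolic subgroup generated by those $r_i$ with $\qa_i(\ql)=0$. For such $i$ one has $r_i\ql=\ql$, so $H_i(e^\ql)=c(\qa_i^\vee)e^{r_i\ql}+b(\qa_i^\vee)e^\ql=(c(\qa_i^\vee)+b(\qa_i^\vee))e^\ql=\qs_i e^\ql$; iterating along a reduced word gives $H_w(e^\ql)=\qs_w e^\ql$ for all $w\in W^v_\ql$. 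Hence $P_{\qs,\ql}^N(e^\ql)=\bigl(\sum_{w\in W^v_\ql\cap W^v_N}\qs_w^2\bigr)e^\ql$ on the nose, and passing to the limit via the truncation argument you already outline gives $P_\qs(e^\ql)=W^v_\ql(\qs^2)P_\qs^\ql(e^\ql)$ directly.

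Your substitute argument---that a $W^v_\ql$-invariant element of $\hat\Z_\qs[[Y,-Q_+^\vee]]$ supported in $\ql-Q_+^\vee$ must be a scalar multiple of $e^\ql$---is false: for any $j$ with $r_j\notin W^v_\ql$, the symmetrized sum $\sum_{v\in W^v_\ql}e^{\ql-v\qa_j^\vee}$ is $W^v_\ql$-invariant, supported in $\ql-Q_+^\vee$, and not a multiple of $e^\ql$. Your fallback of ``carrying the correction terms through'' does not obviously work either, since the well-definedness of $P_\qs^\ql(\,\cdot\,)$ was only established on $e^\ql$, not on arbitrary elements of $e^\ql\cdot\Z_\qs[[-Q_+^\vee]]$.
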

\begin{proof}

If $\qa_i(\ql)=0$, then $H_i(e^\ql)= (c(\qa_i^\vee)+b(\qa_i^\vee))e^\ql=\qs_ie^\ql$, so, for  any $w\in W^v_\ql$, we have $H_w(e^\ql)=\qs_w e^\ql$.
Therefore we obtain the equality 
$$P_{\qs,\ql}^N(e^\ql)=\sum_{w\in W^v_{\ql}\cap W^v_N} \qs_w H_w(e^\ql)=\bigg(\sum_{w\in W^v_\ql \cap W^v_N} \qs_w^2\bigg)e^\ql.
$$
We can consider $P^{\ql\, N}P_{\qs,\ql}^N(e^\ql)=(\sum_{w\in W^v_\ql \cap W^v_N} \qs_w^2)P^{\ql\, N}(e^\ql)$ and we know, by the previous lemma and the arguments  used to obtain (\ref{5.8.3}) that, for a given $\Lambda\in Y$, and $M\in \N$, if $N$ is large enough all the terms of total degree less than M in the  $\qs_i \ ({i\in I})$ of the coefficient of  $e^\Lambda$ are independent of $N$. 

So we obtain simultaneously the fact that $P_{\qs}(e^\ql)$ is a well-defined element of  $\hat  \Z_\qs[[Y,-Q^\vee_+]]$ and of $\hat  \Z'_\qs[[Y,-Q^\vee_+]]$, and also the expected equality. 
\end{proof}

\subsection{Definition and expression of $\SHJ_\qs(e^\ql)$, for $\ql\in Y^{++}$} \label{5.9} 

\parni{\bf 1)} As in \ref{5.8}, we want to show that $\SHJ_\qs(e^\ql)=\sum_{w\in W^v}\, {^w\QD}.([w]e^\ql)=\sum_{w\in W^v}\, {^w\QD}.e^{w\ql}$ is well defined in $\hat\Z_\qs[[-Q_+^\vee]]$ (actually in $\hat\Z'_\qs[[-Q_+^\vee]]$).
Of course $\SHJ_\qs(e^\ql)=\sum_{w\in W^{v\ql}}\, (\sum_{v\in W^v_\ql}{^{wv}\QD}).e^{w\ql}$.
So we have to prove that $\sum_{v\in W^v_\ql}{^{wv}\QD}$ converges in $\hat\Z'_\qs[[-Q_+^\vee]]$ for any $w\in W^v$.
This is equivalent to prove that $\sum_{v\in W^v_\ql}\frac{^{wv}\QD}{\QD}$ converges in $\hat\Z'_\qs[[-Q_+^\vee]]$.
The ideas for this proof come from \cite[p. 199]{Ma03b}.

\medskip
\parni{\bf 2)} Let us consider, for $u\in W^v$, the quotient $\frac{^u\QD}{\QD}$.
We define $\QF(u^{-1})=\QF^+\cap u\QF^-$ which is finite of cardinality $\ell(u)$.
So $\QF^+=(\QF^+\cap u\QF^+)\sqcup \QF(u^{-1})$, $u\QF^+=(\QF^+\cap u\QF^+)\sqcup -\QF(u^{-1})$ and $\frac{^u\QD}{\QD}=\prod_{\qa\in\QF(u^{-1})}\,\frac{\QD_{-\qa}}{\QD_{\qa}}$.

\par Now, for $\qa\in\QF^+$, we get from (\ref{5.5.6}) and (\ref{5.5.7}), that $\qs_\qa c(\qa^\vee)\in\qs_\qa^2+e^{-\qa^\vee}\Z'_\qs[[e^{-2\qa^\vee}]]$, $\qs_\qa c(-\qa^\vee)\in1+e^{-\qa^\vee}\Z'_\qs[[e^{-2\qa^\vee}]]$, hence 
$$\frac{\QD_{-\qa}}{\QD_{\qa}}=\frac{\qs_\qa c(\qa^\vee)}{\qs_\qa c(-\qa^\vee)}\in\qs_\qa^2+e^{-\qa^\vee}\Z'_\qs[[e^{-\qa^\vee}]].
$$
So, we may write $\frac{\QD_{-\qa}}{\QD_{\qa}}=\sum_{r\geq0}\,z_{r,\qa}e^{-r\qa^\vee}$, with $z_{r,\qa}\in\Z'_\qs$ and $z_{0,\qa}=\qs_\qa^2$.

\smallskip
\par Hence $\frac{^u\QD}{\QD}=\sum_{\qm\in Q_+^\vee}\,z_{u,\qm}e^{-\qm}$ with $z_{u,\qm}=\sum(\prod_{\qa\in\QF(u^{-1})}\,z_{r_\qa,\qa})\in \Z'_\qs$ and the sum runs over the families $(r_\qa)_{\qa\in\QF(u^{-1})}$ of integers such that $\sum_{\qa\in\QF(u^{-1})} r_\qa\qa^\vee=\qm$.

\begin{enonce*}[plain]{3) Lemma} Let $\mathfrak p$ be the ideal of $\Z'_\qs$ generated by the $\qs_i\qs'_i$ (for $i\in I$) and $\qs_i(\qs'_i)^{-1}$ (for $i\in I$ with $\qs_i\neq\qs'_i$) ).
Suppose $\ell(u)=ht(\qm)+m$ with $m\geq1$.
Then $z_{u,\qm}\in \mathfrak p^{m}$.
\end{enonce*}

\begin{proof} \cite{Ma03b} Actually $\#\QF(u^{-1})=\ell(u)=ht(\qm)+m$.
So, if $\sum_{\qa\in\QF(u^{-1})} r_\qa\qa^\vee=\qm$, at least $m$ of the $r_\qa$ are equal to $0$.
So at least $m$ of the $z_{r_\qa,\qa}$ are equal to $\qs_\qa^2$ and $\prod_{\qa\in\QF(u^{-1})}\,z_{r_\qa,\qa}\in \mathfrak p^{m}$.
\end{proof}

\parni{\bf 4)} Now $\sum_{u\in wW^v_\ql}\frac{^{u}\QD}{\QD}$ is equal, at least formally, to $\sum_{\qm\in Q_+^\vee}(\sum_{u\in wW^v_\ql} z_{u,\qm})e^{-\qm}$.
 The infinite sum $z^w=\sum_{u\in wW^v_\ql} z_{u,\qm}$ may be cut into infinitely many finite sums $z_m^w\in\Z'_\qs$, where for the sum $z_0^w$ (\resp $z_m^w$ with $m\geq1$) we consider the $u\in wW^v_\ql$ such that $\ell(u)\leq ht(\qm)$ (\resp $\ell(u) = ht(\qm)+m$). By the above Lemma $z_m^w\in \mathfrak p^{m}$, so the series $z^w=z_0^w+\sum_{m\geq1}z_m^w$ converges in $\Z'_\qs$.
 
 \par We have proved that $\sum_{u\in wW^v_\ql}\frac{^{u}\QD}{\QD}\in \hat\Z'_\qs[[-Q_+^\vee]]$, hence 
 
 \begin{equation} \label{5.9.1}
\sum_{v\in W^v_\ql} {^{wv}\QD} \in \hat\Z'_\qs[[-Q_+^\vee]].
 \end{equation} 
 
 \begin{NB} The particular case $w=e,\ql=0$ is interesting: $\sum_{v\in W^v} {^{v}\QD} \in \hat\Z'_\qs[[-Q_+^\vee]]$.
 \end{NB}
 
\begin{enonce*}[plain]{5) Proposition} $\SHJ_\qs(e^\ql)=\sum_{w\in W^v}\, {^w\QD}.e^{w\ql}$ is well defined in  $\hat\Z'_\qs[[-Q_+^\vee]]\subset \hat\Z_\qs[[-Q_+^\vee]]$.
\end{enonce*}

\begin{proof} For $\ql\in Y^{++}$ and $w\in W^v$, $w\ql\in\ql-Q_+^\vee$. So the proposition follows from (\ref{5.9.1}) and 1) above.
\end{proof}

\subsection{Proportionality of $P_\qs(e^\ql)$ and $\SHJ_\qs(e^\ql)$}\label{5.10}

\par We saw in \ref{5.6.9}, that $P_\qs=\mathfrak m_\qs.\!\SHJ_\qs$, with $\mathfrak m_\qs\in \Z_\qs((Y))\subset \Z_\qs[[Y,-Q_+^\vee]]\subset\hat \Z_\qs[[Y,-Q_+^\vee]]$ and $P_\qs=\sum_{w\in W^v}\qs_wH_w$, $\SHJ_\qs=\sum_{w\in W^v}\,{^w\QD}[w]$.
We proved in \ref{5.8} and \ref{5.9} that the series $P_\qs(e^\ql)=\sum_{w\in W^v}\qs_wH_w(e^\ql)$ and $\SHJ_\qs(e^\ql)=\sum_{w\in W^v}\,{^w\QD}e^{w\ql}$ converge in $\hat \Z_\qs[[Y,-Q_+^\vee]]$.
So, using moreover Proposition {\ref{p6.4}}, we have:

 \begin{equation} \label{5.10.1}
W^v_\ql(\qs^2).P^\ql_\qs(e^\ql)=P_\qs(e^\ql)=\mathfrak m_\qs.\!\SHJ_\qs(e^\ql) \in \hat\Z_\qs[[Y,-Q_+^\vee]].
 \end{equation} 

 \goodbreak


\section{Macdonald's formula}\label{s6}

\par In this short section we harvest the fruits of the union of the results we got separately in Section \ref{s4} and in Sections \ref{s5}, \ref{s5b} and \ref{se6}.

\subsection{Specialization of the indeterminates $\qs_i,\qs'_i$} \label{6.1} 

\par We consider a ring $R$ as in \ref{2.10}, in order to define  $\qd^{1/2}:Y\to R$.
So, for any $i\in I$, $\sqrt{q_iq'_i}\in R$, we ask more precisely that $\sqrt{q_i}^{\pm1}$ and $\sqrt{q'_i}^{\pm1}$ are in $R$.

\par We now consider the homomorphism $\Z_\qs\to R, \qs_i\mapsto \sqrt{q_i}^{-1}, \qs'_i\mapsto \sqrt{q'_i}^{-1}$.
 We get thus an homomorphism $\Z_\qs[[Y,-Q_+^\vee]]\to R[[Y]]=R[[Y,-Q_+^\vee]]=\Z_\qs[[Y,-Q_+^\vee]]\otimes_{\Z_\qs}R$; the image $f_{(\qs^2=q^{-1})}$ of an element $f$ is its specialization (at ``$\qs^2=q^{-1}$'').
 
 \begin{NB} This specialization may seem awkward, as we used the specialization $\qs_i\mapsto \sqrt{q_i}^{}, \qs'_i\mapsto \sqrt{q'_i}^{}$ to embed the Iwahori-Hecke algebra $^I\shh_\R$ into $^{BL}\shh_\R$.
 But actually the Iwahori-Hecke algebra has no role in the present work (see also the last paragraph of \ref{2.14}).
 Moreover the Bernstein-Lusztig-Hecke algebra has involutions exchanging $\qs_i,\qs'_i$ with $\qs_i^{-1}, (\qs'_i)^{-1}$.
 Hence some different convention in sections \ref{s5}, \ref{s5b}{, \ref{se6} would perhaps} induce the use here of the specialization $\qs_i\mapsto \sqrt{q_i}^{}, \qs'_i\mapsto \sqrt{q'_i}^{}$.
 \end{NB}

\begin{prop}\label{6.2} Let $\ql\in Y^{++}$ and $w\in W^{v\ql}$. Then $J_w(\ql)$ (as defined in \ref{2.13}) is equal to the specialization of the element $\qd^{1/2}(\ql).\qs_w.H_w(e^\ql)$, \ie $$J_w(\ql)=\qd^{1/2}(\ql).(\qs_w.H_w(e^\ql))_{(\qs^2=q^{-1})}\, .$$
\end{prop}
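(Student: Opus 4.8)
The plan is to prove the identity by showing that both sides satisfy the same recursion and the same initial condition, since the recursion together with $J_e(\ql) = \qd^{1/2}(\ql)e^\ql$ determines the family $(J_w(\ql))_{w \in (W^v)^\ql}$ uniquely. First I would record the base case: by \ref{2.13}.3 we have $J_e(\ql) = \qd^{1/2}(\ql)e^\ql$, and on the algebraic side $\qs_e H_e(e^\ql) = e^\ql$ (recall $H_e = [e]$), so $\qd^{1/2}(\ql)(\qs_e H_e(e^\ql))_{(\qs^2=q^{-1})} = \qd^{1/2}(\ql)e^\ql$. Thus the statement holds for $w = e$.

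Next I would set up the inductive step. Fix $w \in (W^v)^\ql$ with $\ell(w) \geq 1$ and write $w = r_i w'$ with $\ell(w) = 1 + \ell(w')$; as observed just after the statement of Theorem \ref{th3.1}, one has $w' \in (W^v)^\ql$ and $\alpha_i(w'\ql) > 0$. By Theorem \ref{th3.1},
$$
J_w(\ql) = q_i^{-1/2}c\big(q_i^{-1/2},(q_i')^{-1/2};e^{\alpha_i^\vee}\big)J_{w'}(\ql)^{r_i} + q_i^{-1/2}b\big(q_i^{-1/2},(q_i')^{-1/2};e^{\alpha_i^\vee}\big)J_{w'}(\ql).
$$
On the algebraic side I would compute $\qs_w H_w(e^\ql) = \qs_i \qs_{w'} (H_i \ast H_{w'})(e^\ql) = \qs_i (H_i(\qs_{w'} H_{w'}(e^\ql)))$, using $\qs_w = \qs_i\qs_{w'}$ (valid since $\ell(w) = 1 + \ell(w')$) and $H_i \ast H_{w'} = H_{r_iw'} = H_w$ (since $\ell(r_iw') > \ell(w')$). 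Now, under the Cherednik representation (\ref{eq:5.5.1}), $H_i$ acts as $c(\alpha_i^\vee)[r_i] + b(\alpha_i^\vee)[e]$, so for any $f \in \Z_\qs(Y)$ one has $H_i(f) = c(\alpha_i^\vee)\,{}^{r_i}\!f + b(\alpha_i^\vee)\,f$. Applying this with $f = \qs_{w'}H_{w'}(e^\ql)$ gives
$$
\qs_w H_w(e^\ql) = \qs_i c(\alpha_i^\vee)\,{}^{r_i}\!\big(\qs_{w'}H_{w'}(e^\ql)\big) + \qs_i b(\alpha_i^\vee)\,\qs_{w'}H_{w'}(e^\ql).
$$

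Finally I would match the two expressions after specialization. Multiply through by $\qd^{1/2}(\ql)$ and specialize at $\qs_i \mapsto \sqrt{q_i}^{-1}$, $\qs'_i \mapsto \sqrt{q'_i}^{-1}$. By the inductive hypothesis, $\qd^{1/2}(\ql)(\qs_{w'}H_{w'}(e^\ql))_{(\qs^2=q^{-1})} = J_{w'}(\ql)$; moreover the operator ${}^{r_i}(-)$ is exactly $r_i$ applied to the $e^\qm$'s, which on $J_{w'}(\ql)$ gives $J_{w'}(\ql)^{r_i}$ in the notation of Theorem \ref{th3.1} — here I must be careful to check that $\qd^{1/2}$ and the specialization commute with taking ${}^{r_i}$, i.e. that ${}^{r_i}$ only touches the $e^\qm$ part and not the coefficients, which is clear from the definition of the action in \ref{4.3.2}. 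Under the specialization, $\qs_i c(\alpha_i^\vee) \mapsto q_i^{-1/2}c(q_i^{-1/2},(q_i')^{-1/2};e^{\alpha_i^\vee}) = c_q(\alpha_i^\vee)$ and $\qs_i b(\alpha_i^\vee) \mapsto q_i^{-1/2}b(q_i^{-1/2},(q_i')^{-1/2};e^{\alpha_i^\vee}) = b_q(\alpha_i^\vee)$, matching precisely the coefficients in the recursion for $J_w(\ql)$. Hence the two sides agree, completing the induction.

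The main obstacle I anticipate is the bookkeeping around well-definedness and the interaction of the three operations involved: the action of $H_i$ in the completion $\hat\Z_\qs[[Y,-Q_+^\vee]]$ (one must invoke the convergence results of \ref{5.8} so that $H_w(e^\ql)$ genuinely makes sense and the recursion $\qs_w H_w(e^\ql) = \qs_i H_i(\qs_{w'}H_{w'}(e^\ql))$ is legitimate as an identity of convergent series, not merely formally), the $W^v$-action ${}^{r_i}(-)$ on that completion, and the specialization homomorphism $\Z_\qs[[Y,-Q_+^\vee]] \to R[[Y]]$. Each is a ring (or module) map on the appropriate completion, so they commute with one another, but spelling this out — in particular that $\qd^{1/2}(\ql)\cdot{}^{r_i}\!g = {}^{r_i}\!(\qd^{1/2}(\ql)\cdot g)$ only because ${}^{r_i}$ acts trivially on the scalar $\qd^{1/2}(\ql)$, and that the specialization of ${}^{r_i}\!g$ equals $r_i$ applied to the specialization of $g$ — requires care. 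Everything else is a routine comparison of the functions $b, c$ with their specialized counterparts $b_q, c_q$, already set up in \ref{s4} and \ref{4.3.1}.
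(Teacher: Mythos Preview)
Your proof is correct and follows essentially the same route as the paper: induction on $\ell(w)$, with the base case from \ref{2.13}.3 and the inductive step matching the geometric recursion of Theorem \ref{th3.1} against the algebraic identity $\qs_i H_i = c'(\qa_i^\vee)[r_i] + b'(\qa_i^\vee)[e]$ under specialization. One small remark: your concern about convergence is slightly overstated here, since for a fixed $w$ the element $H_w(e^\ql)$ lies in $\Z_\qs(Y)$ (a finite product of operators applied to $e^\ql$), so no appeal to \ref{5.8} is needed for this proposition; the delicate convergence issues arise only when summing over all $w$.
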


\begin{proof} We argue by induction on $\ell(w)$.
If $w=e$, we have $J_e(\ql)=\qd^{1/2}(\ql).e^\ql$ by \ref{2.13}.3 and $\qs_eH_e(e^\ql)=e^\ql$, hence the formula.

Let us now consider $w,w' \in W^v$ such that $w\in W^{v\ql}$, $w=r_i.w'$ and $\ell(w)=\ell(w')+1$ (hence $w'\in W^{v\ql}$).
We saw in section \ref{s4}, that $J_w(\ql)=c_q(\qa_i^\vee).({^{r_i}J_{w'}(\ql)})+b_q(\qa_i^\vee).J_{w'}(\ql)$, with $b_q(\qa_i^\vee)=\sqrt{q_i}^{-1}b(\sqrt{q_i}^{-1},\sqrt{q'_i}^{-1};e^{\qa_i^\vee})$ and $c_q(\qa_i^\vee)=\sqrt{q_i}^{-1}-b_q(\qa_i^\vee)$ (hence $b_q(\qa_i^\vee)=b'(\qa_i^\vee)_{(\qs^2=q^{-1})}$ and $c_q(\qa_i^\vee)=c'(\qa_i^\vee)_{(\qs^2=q^{-1})}$).

\par On the other side, we want to prove the following recursive formula:
$$\qd^{1/2}(\ql)\qs_wH_w(e^\ql)=c'(\qa_i^\vee)\qd^{1/2}(\ql)\qs_{w'}(^{r_i}H_{w'}(e^\ql))+b'(\qa_i^\vee)\qd^{1/2}(\ql)\qs_{w'}(H_{w'}(e^\ql))$$

\parni \ie $\qs_iH_iH_{w'}(e^\ql)=c'(\qa_i^\vee)(^{r_i}H_{w'}(e^\ql))+b'(\qa_i^\vee)(H_{w'}(e^\ql))$.
But this is a clear consequence of $\qs_iH_i=c'(\qa_i^\vee)[{r_i}]+b'(\qa_i^\vee)[e]$ for the action on $\Z_\qs((Y))$.
The resursive formula for the specialization is then 
$\qd^{1/2}(\ql)(\qs_wH_w(e^\ql))_{(\qs^2=q^{-1})}=c_q(\qa_i^\vee)\qd^{1/2}(\ql)(\qs_{w'}(^{r_i}H_{w'}(e^\ql)))_{(\qs^2=q^{-1})}+b_q(\qa_i^\vee)\qd^{1/2}(\ql)(\qs_{w'}(H_{w'}(e^\ql)))_{(\qs^2=q^{-1})}$.
So the proposition is proved.
\end{proof}

\begin{theo}\label{6.3} Let $\ql\in Y^{++}$. Then the image $\shs(c_\ql)$ of $c_\ql$ by the Satake isomorphism is the specialization of $P_\qs^\ql(e^\ql)=\mathfrak m_\qs.(\SHJ_\qs(e^\ql)/W^v_\ql(\qs^2))$, multiplied by $\qd^{1/2}(\ql)$.
More precisely let $H_\ql:=\frac{\SHJ_\qs(e^\ql)}{W^v_\ql(\qs^2)}=\frac{\sum_{w\in W^v}\,{^w\QD}.e^{w\ql}}{W^v_\ql(\qs^2)}\in \Z_\qs[[Y,-Q_+^\vee]]{=\Z_\qs[[Y]]}$, then $\mathfrak m_\qs=H_0^{-1}=\frac{W^v(\qs^2)}{\sum_{w\in W^v}\,{^w\QD}}$ is in $\Z_\qs[[Y,-Q_+^\vee]]$ and we may write:

\begin{equation}\label{6.3.1}
\shs(c_\ql)=\qd^{1/2}(\ql).\bigg(\frac{W^v(\qs^2)}{\sum_{w\in W^v}\,{^w\QD}}\bigg)_{(\qs^2=q^{-1})}  .  \bigg(\frac{\sum_{w\in W^v}\,{^w\QD}.e^{w\ql}}{W^v_\ql(\qs^2)}\bigg)_{(\qs^2=q^{-1})} . 
\end{equation}
\end{theo}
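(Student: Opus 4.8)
The plan is to combine the path-theoretic results of Section \ref{s4} with the algebraic machinery of Sections \ref{s5}, \ref{s5b} and \ref{se6}, via the bridge provided by Proposition \ref{6.2}. First I would recall that by \ref{2.13}.2 we have the decomposition $\shs(c_\ql)=\sum_{w\in (W^v)^\ql}J_w(\ql)$, where the sum is over the minimal-length representatives $(W^v)^\ql$ of $W^v/W^v_\ql$ (this indexing being exactly the one fixed just before Theorem \ref{th3.1}). Then, applying Proposition \ref{6.2} termwise,
$$
\shs(c_\ql)=\sum_{w\in (W^v)^\ql}\qd^{1/2}(\ql).\bigl(\qs_w H_w(e^\ql)\bigr)_{(\qs^2=q^{-1})}=\qd^{1/2}(\ql).\Bigl(\sum_{w\in (W^v)^\ql}\qs_w H_w(e^\ql)\Bigr)_{(\qs^2=q^{-1})}=\qd^{1/2}(\ql).\bigl(P_\qs^\ql(e^\ql)\bigr)_{(\qs^2=q^{-1})}\,,
$$
using the definition (\ref{5.8.2}) of $P_\qs^\ql$ and Proposition of \ref{5.8} (well-definedness of $P_\qs^\ql(e^\ql)$ in $\Z_\qs[[Y,-Q_+^\vee]]$, so that the specialization makes sense). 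One subtlety to check carefully is that the infinite specialized sum $\sum_w (\qs_w H_w(e^\ql))_{(\qs^2=q^{-1})}$ really equals the specialization of $P_\qs^\ql(e^\ql)$: this holds because each coefficient of $e^\Lambda$ (for $\Lambda\in\ql-Q_+^\vee$) is, by the proof in \ref{5.8}, a finite sum of elements of $\Z_\qs$ stable for $N$ large, so the ring homomorphism $\Z_\qs[[Y,-Q_+^\vee]]\to R[[Y]]$ commutes with the limit.

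Next I would feed this into the algebraic identities. By Proposition \ref{p6.4}, $P_\qs(e^\ql)=W^v_\ql(\qs^2)P_\qs^\ql(e^\ql)$, hence $P_\qs^\ql(e^\ql)=P_\qs(e^\ql)/W^v_\ql(\qs^2)$ as an element of $\hat\Z_\qs[[Y,-Q_+^\vee]]$; and by (\ref{5.10.1}), $P_\qs(e^\ql)=\mathfrak m_\qs.\SHJ_\qs(e^\ql)$, where $\mathfrak m_\qs=\QG\QD^{-1}\in\Z_\qs((Y))\subset\Z_\qs[[Y,-Q_+^\vee]]$ is the $W^v$-invariant invertible multiplier of the Cherednik identity \ref{5.6.9}. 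Combining,
$$
P_\qs^\ql(e^\ql)=\mathfrak m_\qs.\frac{\SHJ_\qs(e^\ql)}{W^v_\ql(\qs^2)}=\mathfrak m_\qs.\frac{\sum_{w\in W^v}{}^w\QD.e^{w\ql}}{W^v_\ql(\qs^2)}=\mathfrak m_\qs.H_\ql\,.
$$
Here I need to justify that $H_\ql:=\SHJ_\qs(e^\ql)/W^v_\ql(\qs^2)$ actually lies in $\Z_\qs[[Y,-Q_+^\vee]]$ (not merely $\hat\Z_\qs[[-Q_+^\vee]]$): this is the content of \ref{5.9}.(3) Lemma and \ref{5.9}.4 combined with the remark that dividing the series $\sum_{v\in W^v_\ql}{}^{wv}\QD$ by $W^v_\ql(\qs^2)\in\hat\Z'_\qs$ gives back coefficients in $\Z'_\qs\subset\Z_\qs$ — one checks, using the $\mathfrak p$-adic estimate $z^w_m\in\mathfrak p^m$ and the fact that $W^v_\ql(\qs^2)$ has invertible constant term, that each coefficient is a genuine element of $\Z_\qs$; alternatively, since $P_\qs^\ql(e^\ql)\in\Z_\qs[[Y,-Q_+^\vee]]$ by \ref{5.8} and $\mathfrak m_\qs$ is invertible in $\Z_\qs((Y))$, one gets $H_\ql=\mathfrak m_\qs^{-1}P_\qs^\ql(e^\ql)$ in the larger completion and then checks it already lies in $\Z_\qs[[Y,-Q_+^\vee]]$. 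Likewise, taking $\ql=0$ and $\qs$ generic, $P_\qs^0(e^0)=P_\qs(e^0)/W^v(\qs^2)$, but also $P_\qs(\,\cdot\,)$ applied to $e^0=1$: since $H_i(1)=\qs_i$, we get $P_\qs(e^0)=W^v(\qs^2)$, whence $P_\qs^0(e^0)=1=\mathfrak m_\qs H_0$, so $\mathfrak m_\qs=H_0^{-1}=W^v(\qs^2)/\sum_{w\in W^v}{}^w\QD$, and this is in $\Z_\qs[[Y,-Q_+^\vee]]$ since $H_0$ has constant term $\QD_{\mathrm{const}}=\qs_e$-type unit $1$ (the constant term of $\sum{}^w\QD$ is $1$ by \ref{5.7.01} remark, after dividing by $W^v(\qs^2)$ whose constant term is $1$).

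Finally I would specialize at $\qs^2=q^{-1}$. Everything in sight — $P_\qs^\ql(e^\ql)$, $\mathfrak m_\qs=H_0^{-1}$, $H_\ql$ — lies in $\Z_\qs[[Y,-Q_+^\vee]]$, on which the specialization homomorphism $\Z_\qs[[Y,-Q_+^\vee]]\to R[[Y]]$ is defined and is a ring homomorphism; in particular it carries the inverse $H_0^{-1}$ to $\bigl(H_0^{-1}\bigr)_{(\qs^2=q^{-1})}=\bigl((H_0)_{(\qs^2=q^{-1})}\bigr)^{-1}$, which is legitimate since $(H_0)_{(\qs^2=q^{-1})}$ still has invertible (namely $1$) constant term in $R[[Y]]$. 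Therefore
$$
\shs(c_\ql)=\qd^{1/2}(\ql).\bigl(P_\qs^\ql(e^\ql)\bigr)_{(\qs^2=q^{-1})}=\qd^{1/2}(\ql).(\mathfrak m_\qs)_{(\qs^2=q^{-1})}.(H_\ql)_{(\qs^2=q^{-1})}=\qd^{1/2}(\ql).\Bigl(\tfrac{W^v(\qs^2)}{\sum_{w\in W^v}{}^w\QD}\Bigr)_{(\qs^2=q^{-1})}\Bigl(\tfrac{\sum_{w\in W^v}{}^w\QD.e^{w\ql}}{W^v_\ql(\qs^2)}\Bigr)_{(\qs^2=q^{-1})}\,,
$$
which is (\ref{6.3.1}). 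I expect the main obstacle to be the bookkeeping of completions: specifically, verifying at each step that the element one wants to specialize genuinely lies in $\Z_\qs[[Y,-Q_+^\vee]]$ (where the specialization map is defined and multiplicative on inverses) rather than only in the coarser completions $\hat\Z_\qs[[-Q_+^\vee]]$ or $\Z_\qs((Y))$ that appear naturally in Sections \ref{s5b}–\ref{se6}, and checking that the ring homomorphism commutes with the relevant infinite sums and with inversion. The matching of indexing sets (that $\shs(c_\ql)=\sum_{w\in(W^v)^\ql}J_w(\ql)$ uses precisely the minimal-length representatives, consistent with \ref{5.8}'s definition of $(W^v)^\ql$) is a smaller point but worth stating explicitly.
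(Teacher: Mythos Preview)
Your proof is correct and follows essentially the same route as the paper's: decompose $\shs(c_\ql)$ via \ref{2.13}.2, apply Proposition \ref{6.2} termwise to identify it with $\qd^{1/2}(\ql)(P_\qs^\ql(e^\ql))_{(\qs^2=q^{-1})}$, then use (\ref{5.10.1}) and invertibility of $\mathfrak m_\qs$ in $\Z_\qs((Y))\subset\Z_\qs[[Y,-Q_+^\vee]]$ to deduce $H_\ql\in\Z_\qs[[Y,-Q_+^\vee]]$, and finally set $\ql=0$ to extract $\mathfrak m_\qs=H_0^{-1}$. Your alternative (second) justification that $H_\ql\in\Z_\qs[[Y,-Q_+^\vee]]$ is exactly the paper's argument and is the clean one to keep---the first route via the $\mathfrak p$-adic estimates of \ref{5.9} does not directly show the quotient lands in $\Z_\qs$ rather than $\hat\Z'_\qs$, so drop it; also note the paper gets $P_\qs^0(e^0)=1$ more directly by observing $(W^v)^0=\{e\}$, but your computation via $P_\qs(e^0)=W^v(\qs^2)$ is equally valid.
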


\begin{NB} The elements $W^v_\ql(\qs^2)$, $W^v(\qs^2)$, $\sum_{w\in W^v}\,{^w\QD}.e^{w\ql}$ and $\sum_{w\in W^v}\,{^w\QD}$ are in general not in $\Z_\qs[[Y,-Q_+^\vee]]$: they are only in $\hat\Z_\qs[[Y,-Q_+^\vee]]$.
So one cannot specialize them. Only the above written quotients can be specialized.
\end{NB}

\begin{proof} We proved in \ref{2.13}.2 (\resp \ref{5.8}.3) the convergence of the series $\shs(c_\ql)=\sum_{w\in W^{v\ql}}\,J_w(\ql)$ in $R[[Y]]$ (resp $P_\qs^\ql(e^\ql)=\sum_{w\in W^{v\ql}}\,\qs_wH_w(e^\ql)$ in $\Z_\qs[[Y,-Q_+^\vee]]$).
So, from Proposition \ref{6.2}, we get $\shs(c_\ql)=\qd^{1/2}(\ql).(P_\qs^\ql(e^\ql))_{(\qs^2=q^{-1})}$.
From \ref{5.10}, we get $P_\qs^\ql(e^\ql)=\mathfrak m_\qs.(\SHJ_\qs(e^\ql)/W^v_\ql(\qs^2))$ in $\hat\Z_\qs[[Y,-Q_+^\vee]]$.
But $\mathfrak m_\qs$ is invertible in $\Z_\qs((Y))\subset \Z_\qs[[Y,-Q_+^\vee]]$, so $\SHJ_\qs(e^\ql)/W^v_\ql(\qs^2)=:H_\ql \in \Z_\qs[[Y,-Q_+^\vee]]$.

\par We consider the expression $P_\qs^\ql(e^\ql)$ when $\ql=0$ (hence $W^v_\ql=W^v$ and $W^{v\ql}=\{e\}$).
We get $1=P_\qs^0(e^0)=\mathfrak m_\qs.H_0$, so $\mathfrak m_\qs=H_0^{-1}=\frac{W^v(\qs^2)}{\sum_{w\in W^v}\,{^w\QD}}$ and (\ref{6.3.1}) is proved.
\end{proof}

\begin{remas}\label{6.4} 1) In the case of a split affine untwisted Kac-Moody group, this theorem is one of the main results of Braverman, Kazhdan and Patnaik in \cite{BrKP15}.

\par 2) Let us consider now the case where all $q_i,q'_i$ are equal to a same $q\in\N$. Accordingly we consider the homomorphism $\Z_\qs\to \Z[\qs^{\pm1}]$ sending each $\qs_i,\qs'_i$ to the indeterminate $\qs$ (and $\Z'_\qs$ to $\Z[\qs^2]$).
We consider now all the above algebras over $\Z[\qs^{\pm1}]$ by extension of scalars.
 In particular now $b'(\qa^\vee)=\frac{\qs^2-1}{1-e^{-\qa^\vee}}$ and $c'(\qa^\vee)=1-b'(\qa^\vee)=\frac{1-\qs^2e^{-\qa^\vee}}{1-e^{-\qa^\vee}}$.
 
 \par We may modify $\QD$ by adding a factor $\QD_{im}$: $\QD_{all}=\QD.\QD_{im}$ with 
 $$\QD_{im}=\prod_{\qa\in\QF_{im}^+}\,\Big(\frac{1-\qs^2e^{-\qa^\vee}}{1-e^{-\qa^\vee}}\Big)^{m(\qa^\vee)},
 $$ 
 where $m(\qa^\vee)$ is some ``multiplicity'' assumed invariant under $W^v$.
 So $^w\QD_{im}=\QD_{im}$, for any $w\in W^v$.
 Now, if we replace in the theorem $\QD$ by $\QD_{all}$ and compute the corresponding $H_\ql^{all}$, then we get $H_\ql^{all}=\QD_{im}.H_\ql$.
 In particular the Macdonald's formula (\ref{6.3.1}) remains true with $\QD_{all}$: the factor $\QD_{im}$ disappears.
 
 \par For the split affine case, one may choose $m(\qa^\vee)$ equal to the multiplicity of $\qa^\vee$ in the corresponding Kac-Moody algebra.
 Then there is a more explicit formula for $H_0^{all}$ (as a product), related to the solution of the Macdonald's constant term conjecture by Cherednik.
 The formula is reasonably simple when $\QF_{all}$ is affine and simply laced (hence untwisted), see \cite{Ma03}, \cite{Ma03b}, \cite{BrKP15} or \cite{PaP17}.
 
 \par We do not know such a product formula for $H_0^{all}$ in the general Kac-Moody case.
 
\end{remas}

\bigskip

\medskip

\noindent Universit\'e de Lyon, Institut Camille Jordan (UMR 5208)\\
Universit\'e Jean Monnet, Saint-Etienne, F-42023, France

E-mail: Stephane.Gaussent@univ-st-etienne.fr
\medskip
\par
\noindent Universit\'e de Lorraine, Institut \'Elie Cartan de Lorraine, UMR 7502, and \\
CNRS, Institut \'Elie Cartan de Lorraine, UMR 7502,
\\Vand\oe uvre l\`es Nancy, F-54506, France

E-mail: Nicole.Panse@univ-lorraine.fr ; Guy.Rousseau@univ-lorraine.fr

\bigskip
\noindent {\bf Acknowledgement:} The second author acknowledges support of the ANR grants ANR-13-BS01-0001-01 and ANR-15-CE40-0012.


\begin{thebibliography}{999}



\bibitem[AbH17]  {AbH17} \label {AbH17}
 { Ramla {\sc Abdellatif} \& Auguste {\sc H\'ebert},
Completed Iwahori-Hecke algebras and parahorical Hecke algebras for Kac-Moody groups over local fields, {\it preprint} {\bf} (2017).}

\bibitem  [BaP96]{Ba96}\label{Ba96}
 Nicole {\sc  Bardy-Panse},
 {\it Syst\`emes de racines infinis}, M\'emoire Soc. Math. France (N.S.)
 {\bf65} (1996).
 
 \bibitem[BaPGR16]  {BPGR16}\label{BPGR16}
 Nicole {\sc Bardy-Panse},  St\'ephane {\sc Gaussent} \& Guy {\sc Rousseau},
Iwahori-Hecke algebras for Kac-Moody groups over local fields, {\it  Pacific J. Math.} {\bf285}  (2016), 1-61.

\cache{
 \bibitem  [BCGR13]{BCGR13}\label{BCGR13}
Nicole {\sc Bardy-Panse}, Cyril  {\sc Charignon}, St\'ephane {\sc Gaussent} \& Guy {\sc Rousseau},
 Une preuve plus immobili\`ere du th\'eor\`eme de saturation de Kapovich-Leeb-Millson, {\it Ens. Math.} {\bf 59} (2013), 3-37.}






\bibitem[{BrKP16}]  {BrKP15}\label{BrKP15}
  Alexander {\sc Braverman},  David {\sc Kazhdan}  \& Manish {\sc Patnaik},
 Iwahori-Hecke algebras for $p-$adic loop groups, \textit{Inventiones Math.} {{\bf204} (2016), 347-442}.

\bibitem[BrT72]  {BrT72}\label {BrT72}
  Fran\c cois {\sc Bruhat} \& Jacques {\sc Tits},
Groupes r\'eductifs sur un corps local I, Donn\'ees radicielles
 valu\'ees, {\it Publ. Math. Inst. Hautes \'Etudes Sci.} {\bf41} (1972), 5-251.

\cache{
 \bibitem[Ca79]  {Ca79}
  Pierre {\sc Cartier},
Representations of $\g p-$adic groups: a survey, in {\it Automorphic forms, representations and L-functions, Corvallis 1977},  {\it Proc. Symp. Pure Math.} {\bf33} (1979), 111-155.
}

\bibitem [Ca80] {Ca80} \label {Ca80}
{William  {\sc Casselmann},
The unramified principal series of $\g p-$adic groups. I The spherical function
{\it Compositio Math.} {\bf40} (1980), 387-406.}

\bibitem[Cha10] {Ch10}\label{Ch10}
Cyril  {\sc Charignon},
Structures immobili\`eres pour un groupe de Kac-Moody sur un corps local, preprint Nancy (2010),
arXiv [math.GR] 0912.0442v3.

\bibitem[Cha11] {Ch11}\label{Ch11}
Cyril  {\sc Charignon}, {\it Immeubles affines et groupes de Kac-Moody, masures bord\'ees} (th\`ese Nancy, 2 juillet 2010)
 ISBN 978-613-1-58611-8 (\'Editions universitaires europ\'eennes, Sarrebruck, 2011).


\bibitem[Che95] {Che95}\label{Che95}
Ivan {\sc Cherednik},
Double affine Hecke algebras and Macdonald's conjectures, {\it Annals of Math.}  {\bf141}
 (1995), 191-216.
 
\bibitem[CheM13] {CheM13}\label{CheM13}
 Ivan {\sc Cherednik} \& Xiaoguang {\sc Ma},
Spherical and Whittaker functions via DAHA.I., {\it Selecta Math.}  {\bf19}(3) (2013), 737-817.

 \bibitem[CiMR17] {CiMR17}\label{CiMR17}
Corina {\sc Ciobotaru}, Bernhard {\sc M\H uhlherr} \& Guy {\sc Rousseau},
 The cone topology on masures, ArXiv:1703.00318.




\cache{
\bibitem[GL05]{GL05}\label{GL05}
St\'ephane {\sc Gaussent} \& Peter {\sc Littelmann}, {LS-galleries, the path model, and MV-cycles}, {\it Duke Math. J.} {\bf 127}, (2005),  35--88.

\bibitem[GL12]{GL12}{GL12}
St\'ephane {\sc Gaussent} \& Peter {\sc Littelmann}, {One-skeleton galleries, the path model and a generalization of Macdonald's formula for Hall-Littlewood polynomials}, {\it Int. Math. Res. Not. IMRN} 2012 (n$^\circ$12), 2649-2707.}

\bibitem[GR08] {GR08}\label {GR08}
St\'ephane {\sc Gaussent} \& Guy {\sc Rousseau},
Kac-Moody groups, hovels and Littelmann paths, {\it Annales Inst. Fourier} {\bf
58} (2008), 2605-2657.

\bibitem[GR14] {GR14}\label{GR14}
St\'ephane {\sc Gaussent} \& Guy {\sc Rousseau},
Spherical Hecke algebras for Kac-Moody groups over local fields, {\it Annals of Math.} {\bf180} (2014), 1051-1087.
 
 \bibitem [HaKP10]{HaKP10}\label{HaKP10}
 Thomas J. {\sc Haines},  Robert E. {\sc Kottwitz} \& Amritanshu {\sc Prasad},
 Iwahori-Hecke algebras, {\it J. Ramanujan Math. Soc.} {\bf25} (2010), 113-145.
 
\bibitem[He17] {He17} \label {He17}
{ Auguste {\sc H\'ebert},
Gindikin-Karpelevich finiteness for Kac-Moody groups over local fields, {\it Intern. Math. Res. Notes} (2017),  online October 2016.}

 
\bibitem[Hi10]{Hi10}\label{Hi10}
Petra {\sc Hitzelberger (Schwer)}, Kostant convexity for affine buildings,
 {\it Forum Math.}, {\bf 22} (2010), 959-971.











\bibitem[KM08]{KM08}\label{KM08}
Misha {\sc Kapovich} \& John J. {\sc Millson}, {\em A path model for geodesics in euclidean buildings and its applications to representation theory}, Geometry, Groups and Dynamics {\bf 2}, (2008),  405-480.



\bibitem[Li94]{Li94}\label{Li94}
{Peter {\sc Littelmann}, { A Littlewood-Richardson rule for symmetrizable Kac-Moody algebras}, {\it Inventiones Math.} {\bf 116}, (1994),  329-346.}








\bibitem [Ma71] {Ma71}\label{Ma71}
{Ian G. {\sc Macdonald}, 
{\it Spherical functions on a group of $p-$adic type},
 Ramanujan Inst. Publ. {\bf2}, (Madras University, 1971).}

\bibitem[Ma03] {Ma03} \label{Ma03}
 Ian G. {\sc Macdonald},
{\it Affine Hecke algebras and orthogonal polynomials}, Cambridge tracts in Math. {\bf157},
 (Cambridge U. Press, Cambridge, 2003).
 
\bibitem [Ma03b] {Ma03b}\label{Ma03b}
 Ian G. {\sc Macdonald}, 
 A formal identity for affine root systems, 
 in {\it Lie groups and symmetric spaces}, S. Gindikin \ed, 
 Amer. Math. Soc. Transl. {\bf210}, (Amer. Math. Soc., Providence, 2003), 195-211.
 


\bibitem[MoP89] {MP89} \label{MP89}
Robert {\sc Moody} \& Arturo {\sc Pianzola},
On infinite root systems, {\it Trans. Amer. Math. Soc.}
{\bf 315} (1989), 661-696.

\bibitem[MoP95] {MP95} \label{MP95}
 Robert {\sc Moody} \& Arturo {\sc Pianzola},
{\it Lie algebras with triangular decompositions},
 (Wiley-Interscience, New York, 1995). 


 

 
\bibitem[PaP17] {PaP17} \label{PaP17}
Manish M. {\sc Patnaik} \& Anna {\sc Pusk\'as},
Metaplectic covers of Kac-Moody groups and Whittaker functions, {\it }preprint
{\bf}(2017).





\bibitem[Ro11] {R11} \label{R11}
 Guy {\sc Rousseau},
Masures affines, 
{\it Pure Appl. Math. Quarterly} {\bf7} (n$^o$ 3 in honor of J. Tits) (2011), 859-921.

\bibitem[Ro16] {R12} \label{R12}
 Guy {\sc Rousseau},
Groupes de Kac-Moody d\'eploy\'es sur un corps local, 2 Masures ordonn\'ees, {\it Bull. Soc. Math. France} {\bf 144} (2016), 613-692.

\bibitem[Ro17] {R13} \label{R13}
 Guy {\sc Rousseau},
Almost split Kac-Moody groups over ultrametric fields, {preprint Nancy July 2015}, ArXiv [math.GR] 1202.6232{v2}. {To appear in {\it Groups, Geometry and Dynamics.}}



\bibitem [T87]{T87}\label{T87}
 Jacques {\sc Tits},
Uniqueness and presentation of Kac-Moody groups over
 fields, {\it J. of Algebra} {\bf105} (1987), 542-573.
 
\bibitem [Vi08]{Vi08}\label{Vi08}
 Sankaran {\sc Viswanath},
Kostka-Foulkes polynomials for symmetrizable Kac-Moody algebras, {\it Sem. Lothar. Combin.} {\bf58} (2007/08), Art B58f, 20.




\end{thebibliography}
\end{document}